\numberwithin{equation}{section}
\renewcommand{\geq}{\geqslant}
\renewcommand{\leq}{\leqslant}
\newcommand{\R}{{\mathbb R}}
\newcommand{\C}{{\mathbb C}}
\newcommand{\Z}{{\mathbb Z}}
\newcommand{\T}{{\mathbb T}}
\newcommand{\PP}{{\mathbb P}}
\newcommand{\tor}{{\mathfrak t}}
\newcommand{\Pol}{\Sigma}
\newcommand{\tr}{\mathrm{tr}}
\newcommand{\gr}{\mathrm{grad}}
\newcommand{\cL}{{\mathcal L}}% Lie derivative
\newcommand{\cO}{{\mathcal O}}% coherent sheaves
\newcommand{\cK}{{\mathcal K}}%canonical bundle
\newcommand{\Scal}{\mathit{Scal}}
\newcommand{\Sph}{{\mathbb S}}% sphere
\newcommand{\La}{{\mathrm L}}
\def\th/#1#2{{#1}^{#2}}% transversal holomorphic structure
\newtheorem{prop}{Proposition}
\newtheorem{lemma}[prop]{Lemma}
\newtheorem{thm}[prop]{Theorem}
\newtheorem{cor}[prop]{Corollary}
\newtheorem{conj}[prop]{Conjecture}
\numberwithin{prop}{section}
\newtheorem{conclusion}{Conclusion}
\theoremstyle{definition}
\newtheorem{dfn}[prop]{Definition}
\newtheorem{ex}[prop]{Example}
\newtheorem{rem}[prop]{Remark}
\def\cal{\mathcal }
\def\fnote#1{\footnote}
\def\be{\begin{equation}}
\def\ee{\end{equation}}
\def\bea{\begin{eqnarray*}}
\def\eea{\end{eqnarray*}}
\date{\today}
\begin{document}

\title[Steady GKRS's]{Hamiltonian $2$-forms and  new explicit Calabi--Yau metrics and gradient steady K\"ahler--Ricci solitons on $\C^n$}

\author[V. Apostolov]{Vestislav Apostolov} 
\address{Vestislav Apostolov \\ Laboratoire Jean Leray, Nantes Universit\'e \\ and \\
Institute of Mathematics and Informatics\\  Bulgarian Academy of Sciences \\ and \\
Department of Mathematics,  UQAM \\ CP 8888, succ. Centre-ville \\
Montreal (QC) H3C 3P8, Canada \\} 
\email{vestislav.apostolov@uqam.ca}

\author[C. Cifarelli]{Charles Cifarelli} \address{Charles Cifarelli \\ 
Laboratoire Jean Leray, Nantes Universit\'e\\
2, Rue de la Houssini\`ere - BP 92208\\
F-44322 Nantes, France \\} 
\email{Charles.Cifarelli@univ-nantes.fr}

\thanks{V.A.  was supported in part by an NSERC Discovery Grant. Both authors were supported  by a ``Connect Talent'' Grant of the R\'egion des Pays de la Loire in France. }

\date{\today}

\begin{abstract} For each partition of the positive integer $n= \ell +\sum_{j=1}^\ell d_j$,  where  $\ell\ge 1$ and $d_j \ge 0$  are integers, we construct a continuous $(\ell-1)$-parameter family of  explicit complete gradient steady K\"ahler--Ricci solitons on $\C^n$ admitting a hamiltonian $2$-form of order $\ell$ and symmetry  group ${\rm U}(d_1+ 1) \times \cdots \times {\rm U}(d_{\ell} + 1)$.  For $\ell=1, \, d_1=n-1$ we obtain Cao's example \cite{Cao} whereas for other partitions the metrics are new. Furthermore, when $n=2, \, \ell=2, \, d_1=d_2=0$ we obtain complete gradient steady K\"ahler--Ricci solitons on $\C^2$ which have positive sectional curvature but are not isometric to Cao's ${\rm U}(2)$-invariant example.  This disproves a conjecture by Cao. We also present a construction yielding explicit families of complete gradient steady K\"ahler-Ricci solitons on $\C^n$ containing higher dimensional extensions of the Taub-NUT Ricci-flat K\"ahler metric on $\C^2$. When $n\ge 3$, the complete Ricci-flat K\"ahler metrics,  and  when $n\ge 2$,  their deformations to complete gradient steady K\"ahler-Ricci solitons seem not to have been observed before our work.
\end{abstract}

\maketitle

\section{Introduction} A \emph{steady  K\"ahler-Ricci soliton} is a K\"ahler manifold $(M, J, g, \omega)$ endowed with a (real) holomorphic vector field $X$ such that  the Ricci form $\rho$ of $\omega$ satisfies 
\begin{equation}\label{steadyKRS}
 \rho = -\frac{1}{2} {\mathcal L}_{JX} \omega, \end{equation}
where ${\mathcal L}$ denotes the Lie derivative. In the case when $X=0$, we obtain a \emph{Ricci-flat} K\"ahler metric.

We will be interested  throughout this paper in the special case when $X= -\omega^{-1}(df)$. Then $X$ is also symplectic and hence a Killing vector field of the corresponding K\"ahler structure $(g, J)$. In this case,  the  Ricci tensor $Ric_g$ satisfies
\[   Ric_g = \frac{1}{2}{\mathcal L}_{\nabla f} g.\]
The data $(M, J, \omega, X)$ is  referred to  as a \emph{steady gradient K\"ahler-Ricci soliton} (steady GKRS for short). 

In the case when $g$ is a \emph{complete} riemannian metric, the vector field $JX$ is complete too \cite{zhang-complete}  and acting on $g$ by the flow of $\frac{1}{2}JX$ gives rise to a self-similar solution of the much studied K\"ahler-Ricci flow %\footnote{CC: Changed a factor of 1/2 to make consistent with the usual normalization for KRF} 
\[ \frac{\partial g_t}{\partial t} = - Ric_{g_t}. \]
Thus, complete steady GKRS's represent models for the possible formation of singularities of the K\"ahler-Ricci flow and have been intensively studied in recent times.  Notice that  a complete steady GKRS is necessarily \emph{non-compact} (see, for example, \cite[Proposition 2.1]{XZhang-compact}).

A comprehensive study of  steady GKRS's  from a local prospective was carried out by R. Bryant~\cite{bryant}, who showed that to each complex $n$-dimensional simply connected steady GKRS $(M, J, \omega, X)$ one can associate a unique up to a multiplicative constant holomorphic volume form $\Theta \in \Omega^{n, 0}(M, J)$,  such that \[ dV_g =\frac{\omega^{n}}{n!} = (\sqrt{-1})^{n^2} C  e^{-f}\Theta \wedge \bar \Theta, \qquad C>0. \] Together with the holomorphic vector field $Z= X - iJX$, these are the only  \emph{local} invariants of a steady GKRS around a point where $Z\neq 0$: in a small neighbourhood of such a point, there exists a germ of steady GKRS with prescribed invariants $(Z, \Theta)$, depending on the choice of an arbitrary real analytic K\"ahler metric and a function defined on a complex hyper-surface  of $M$,   transversal to $Z$.

On the other hand, \emph{complete} examples of steady GKRS's are still scarce.  Using the Calabi ansatz, Cao~\cite{Cao} constructed a complete ${\rm U}(n)$-invariant steady GKRS on $\C^n$, and showed that it is the unique (up to isometry) steady GKRS on $\C^n$ with this symmetry group.  Similar constructions produced complete examples on the total spaces of (roots of) the canonical bundle over a Fano K\"ahler-Einstein base \cite{Cao,FW}, and certain generalizations \cite{Schafer1}.  Biquard--Macbeth~\cite{Biquard-Macbeth} have used gluing techniques and   Conlon--Deruelle~\cite{ConDer}  and Sch\"afer~\cite{Schafer2} have used PDE methods to obtain further complete examples.

Regarding possible classification results, Cao observed  in \cite{Cao} that the  ${\rm U}(n)$-invariant steady GKRS on $\C^n$ has everywhere positive sectional curvature and he conjectured
\begin{conj}\label{c:Cao}\cite{Cao} Any simply connected complete steady gradient K\"ahler-Ricci soliton with positive sectional curvature is isometric to the ${\rm U}(n)$-invariant steady GKRS on $\C^n$.
\end{conj}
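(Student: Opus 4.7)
The plan is to reduce Cao's conjecture to his own uniqueness theorem for ${\rm U}(n)$-invariant solitons by showing that any complete simply connected steady GKRS $(M, J, g, \omega)$ with positive sectional curvature must carry a standard isometric and effective ${\rm U}(n)$-action.

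\smallskip
\emph{Step 1 (Topology and complex structure).} Positive sectional curvature forces $M$ to be noncompact, and the Cheeger--Gromoll soul theorem combined with the fact that the soliton potential $f$ is proper and has a unique critical point at the maximum of the scalar curvature yields $M\cong \R^{2n}$ diffeomorphically. To upgrade to a biholomorphism $M\cong\C^n$, I would invoke uniformization theorems for complete non-compact K\"ahler manifolds of positive bisectional curvature (e.g.\ Mok, Chau--Tam, Ni), after first verifying that positive sectional curvature together with the soliton equation $\rho=-\tfrac12\mathcal{L}_{JX}\omega$ and the curvature decay estimates available in the setting of \cite{ConDer} imply (at least asymptotic) positive bisectional curvature.

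\smallskip
\emph{Step 2 (Producing a ${\rm U}(n)$-symmetry).} The heart of the argument is to show that $\mathrm{Isom}(M,g,J)$ contains a standard ${\rm U}(n)$. The Killing field $JX$ is built in and $f$ is invariant under the identity component of the isometry group. I would then analyze the asymptotic geometry at infinity: positive sectional curvature combined with Perelman / Cao--Hamilton $\kappa$-noncollapsing should furnish a non-degenerate asymptotic cone of positive curvature. Adapting Brendle's rotational-symmetry technique (originating in his classification of the 3D Bryant soliton and later extended to higher real dimensions under additional asymptotic conditions), one aims to produce approximate Killing fields on the ends; because the metric is K\"ahler these must commute with $J$, and the unique critical point of $f$ serves as a common fixed point at which the approximate fields should integrate to a global $J$-preserving ${\rm U}(n)$-action. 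Once this is in place, Cao's uniqueness theorem identifies the soliton with his standard example, proving the conjecture.

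\smallskip
\emph{Main obstacle.} Step~2 is by far the most delicate and is essentially the content of the conjecture: one must extract \emph{extra} Killing vector fields from a pointwise curvature condition. Brendle's techniques in higher dimensions require precise asymptotic structure of the soliton end (asymptotically cylindrical, paraboloidal, or analogous model behavior), and such structure is not automatic from positive sectional curvature alone in the K\"ahler category. Even granting a nice asymptotic cone, the argument would need to rule out asymptotic symmetry groups strictly smaller than ${\rm U}(n)$ — for instance torus subgroups, or products ${\rm U}(d_1+1)\times\cdots\times {\rm U}(d_\ell+1)$ corresponding to a hamiltonian $2$-form of higher order — while remaining compatible with the positivity of sectional curvature. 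This is precisely the step where I would expect the proposed strategy to run into fundamental trouble.
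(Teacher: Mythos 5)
There is a fundamental problem with your proposal: the statement you are trying to prove is \emph{false}, and this very paper disproves it. The paper does not prove Conjecture~\ref{c:Cao}; rather, it constructs explicit counterexamples (Corollary~\ref{c:Cao-conjecture}). Taking $n=2$, $\ell=2$, $d_1=d_2=0$ in the hamiltonian $2$-form (orthotoric) ansatz, with $F(t)=(1-e^{-2a})t-(1-e^{-2at})$, $a>0$, one obtains a one-parameter family of complete, simply connected, irreducible steady GKRS's on $\C^2$ whose sectional curvature is everywhere positive (this is the content of the curvature computations in the Appendix, Propositions~\ref{Ric>0}, \ref{scal-max} and \ref{full-sec}, carried out via the ambi-K\"ahler structure of the orthotoric metric), but whose isometry group is only ${\rm U}(1)\times{\rm U}(1)$. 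Non-isometry to Cao's ${\rm U}(2)$-invariant example is established not by comparing symmetry groups naively but through the mobility theory of hamiltonian $2$-forms (Theorem~\ref{thm:CMR}): the space of hamiltonian $2$-forms of these metrics is $2$-dimensional and the eigenvalue data $(a,\alpha_1,\alpha_2)$ is an isometry invariant up to affine reparametrization and scale, so distinct normalized data give non-isometric metrics.

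This pinpoints exactly where your argument collapses: Step~2. You correctly identified it as the delicate step, but the obstruction is not technical (lack of asymptotic estimates, or difficulty adapting Brendle's method) --- it is that no ${\rm U}(n)$-action exists for these counterexamples, so no strategy for producing one can succeed. Positive sectional curvature together with the steady soliton equation simply does not force more than torus symmetry in the K\"ahler setting; the asymptotic symmetry groups ``strictly smaller than ${\rm U}(n)$'' that you hoped to rule out (in your case, the torus ${\rm U}(1)\times{\rm U}(1)$ attached to a hamiltonian $2$-form of order $2$) do in fact occur, compatibly with positive curvature, completeness, and simple connectivity. Step~1 also has a minor gap (positive sectional curvature does not imply positive bisectional curvature, and the diffeomorphism type $\R^{2n}$ uses Cao--Hamilton/Bryant/Chau--Tam under positive Ricci curvature plus a critical point of the scalar curvature), but this is moot: the correct response to this statement is a disproof, which is what the paper supplies.
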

Partial evidence for this conjecture was provided in a consequent work by Cao--Hamilton~\cite{Cao-Hamilton},   who showed that a complete steady GKRS with positive Ricci curvature  and scalar curvature having a critical point must be diffeomorphic to $\R^{2n}$.
Bryant~\cite{bryant} and Chau--Tam~\cite{Chau-Tam} independently improved this result by showing that such a GKRS must be, furthermore,  biholomorphic to $\C^n$.

\bigskip
Thus motivated, we use the theory of \emph{Hamiltonian 2-forms} developed in  \cite{ACG,ACGT} as an ansatz in order to construct  new families of  \emph{explicit}  complete  steady GKRS's on $\C^n$.  On each $\C^n$, our construction actually gives families of two different geometric flavours. 
 
 \smallskip
 The first class of examples, which we call of \emph{Cao type}, arises from any partition of $n$  as sum of non-negative integers
 \[n= \ell +\sum_{j=1}^\ell d_j, \qquad \ell \ge 1, \qquad d_j \ge 0. \]
 \begin{thm}[Cao's type GKRS]\label{thm:Cao} Given a partition of the integer $n\ge 2$ as above, there exists an $(\ell-1)$-dimensional family of non-isometric, complete  irreducible steady GKRS on $\C^n$, all admitting a hamiltonian $2$-form of order $\ell$ and isometry group $\prod_{j=1}^{\ell} {\rm U}(d_j+1)$.  
 \end{thm}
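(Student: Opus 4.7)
The plan is to apply the Hamiltonian $2$-form ansatz of \cite{ACG,ACGT} to reduce the steady GKRS equation on $\C^n$ to $\ell$ decoupled ODEs admitting explicit exponential--polynomial solutions, and then to select the parameters for which the resulting local Kähler metric extends smoothly to a complete metric on $\C^n$.

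First, I would write down the ansatz adapted to the symmetry $H = \prod_{j=1}^{\ell}{\rm U}(d_j+1)$ acting on $\C^n = \prod_{j=1}^{\ell} \C^{d_j+1}$. According to \cite{ACGT}, any Kähler metric admitting a Hamiltonian $2$-form of order $\ell$ whose $\ell$ non-constant eigenvalues $x_1,\dots,x_\ell$ are the momenta of a maximal torus $T^\ell\subset H$ takes, on the dense open set where these eigenvalues are distinct, the form
\begin{equation*}
g \;=\; \sum_{j=1}^{\ell} p_c(x_j)\, g_j \;+\; \sum_{j=1}^{\ell}\Bigl(\frac{\Delta_j(x)}{\Theta_j(x_j)}\, dx_j^2 \;+\; \frac{\Theta_j(x_j)}{\Delta_j(x)}\, \theta_j^{\,2}\Bigr), \qquad \Delta_j(x) = \prod_{k\ne j}(x_j-x_k),
\end{equation*}
where $g_j$ is the Fubini--Study metric on $\C\PP^{d_j}$, $p_c(t)=\prod_{j=1}^{\ell}(t-c_j)$ is a degree-$\ell$ real polynomial, the $\theta_j$ are connection $1$-forms on a toric principal bundle, and the $\Theta_j$ are unknown smooth functions of one variable on intervals $I_j\subset\R$.

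The key reduction, verified from the explicit Ricci computation in \cite{ACGT}, is that equation \eqref{steadyKRS} with soliton potential $f=\sum_j\lambda_j x_j$ (the only $H$-invariant linear combination of momenta) \emph{decouples} into $\ell$ independent first-order ODEs of the form
\begin{equation*}
\bigl(e^{-\lambda_j x_j}\,\Theta_j(x_j)\bigr)' \;=\; -2\, e^{-\lambda_j x_j}\, P_j(x_j),
\end{equation*}
where $P_j$ is a polynomial of degree $d_j+\ell-1$ whose coefficients are determined explicitly by $p_c$, the multiplicity $d_j$, and the slopes $\lambda_k$. Each $\Theta_j$ is then obtained by a single integration and has the form (polynomial in $x_j$) $+$ constant$\cdot e^{\lambda_j x_j}$.

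The final and most delicate step is to promote this local solution to a complete smooth Kähler metric on $\C^n$. Following the boundary analysis of \cite[\S{2}]{ACGT}, smooth extension across the divisor $\{x_j = a_j\}$ (where the $j$-th circle of $T^\ell$ collapses) and further smooth blow-down to the origin of the $j$-th factor $\C^{d_j+1}$ force the boundary conditions $\Theta_j(a_j)=0$, $|\Theta_j'(a_j)|=2$, together with the matching $c_j = a_j$ of the Pontryagin polynomial to the ordered endpoints $a_1<\cdots <a_\ell$; strict positivity $\Theta_j>0$ on $(a_j,\infty)$ combined with the leading exponential growth $\Theta_j(x_j)\sim \mathrm{const}\cdot e^{\lambda_j x_j}$ with $\lambda_j>0$ yields completeness of the non-compact end and of the soliton vector field $X = \sum_j \lambda_j (\partial/\partial \theta_j)$. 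Counting parameters, the ordered tuple $a_1 < \cdots < a_\ell$ supplies an $\ell$-parameter family of solutions from which the overall translation $x_j \mapsto x_j + t$ (a symplectomorphism of $\C^n$) must be quotiented, leaving the asserted $(\ell-1)$-parameter family; irreducibility for $\ell \ge 2$ is automatic because the $\ell\ge 2$ generically distinct non-constant eigenvalues of the Hamiltonian $2$-form prevent the metric from splitting off a de Rham factor. The hardest part of the plan is the \emph{simultaneous} verification that the explicit $\Theta_j$ satisfies the smooth blow-down at $a_j$, strict positivity on $(a_j,\infty)$, and the correct exponential growth at infinity, without over-determining the parameters $(a_j, \lambda_j, c_j)$; this delicate interplay, controlled by the signs of the $\lambda_j$ and the spacing of the $a_j$, is exactly where the new non-Cao examples ($\ell \ge 2$) must be extracted from a careful sign and asymptotic analysis of the exponential-polynomial solutions.
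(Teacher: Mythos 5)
Your overall strategy (Hamiltonian $2$-form ansatz, reduction to first-order ODEs with exponential--polynomial solutions, then boundary and completeness analysis) is the same as the paper's, but several of the concrete steps as you state them are wrong, and two of them would actually break the argument.

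First, the soliton potential cannot be $\sum_j \lambda_j x_j$ with independent slopes $\lambda_j$. In the eigenvalue coordinates of the Hamiltonian $2$-form the Ricci potential and the pluriharmonic functions are separable in the $x_j$, and the only $\T^\ell$-invariant Killing potentials that are separable are (constants plus) multiples of $\sigma_1=\sum_j x_j$; hence $f=2a\sigma_1$ with a \emph{single} constant $a$, and all $\ell$ ODEs share the same exponential rate and a single auxiliary polynomial $q(t)$ of degree $\le \ell-1$. The coefficients of $q$ are not ``determined by $p_c$ and the slopes'': they are pinned down (up to one overall scale) by the $\ell-1$ vanishing conditions $F(\alpha_j)=0$, and verifying that the resulting $\Theta$ has the correct alternating signs on the interlacing domain $(\alpha_1,\alpha_2)\times\cdots\times(\alpha_{\ell-1},\alpha_\ell)\times(\alpha_\ell,\infty)$ is a genuine step (your picture of each $x_j$ ranging over its own half-line $(a_j,\infty)$ is incompatible with your own ansatz, since the factors $\Delta_j(x)=\prod_{k\ne j}(x_j-x_k)$ would then vanish in the interior). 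Second, your completeness mechanism has the asymptotics backwards: for $a>0$ the solution is $F(t)=P(t)+ce^{-2at}$ with $\deg P\le n-1$, so the profile grows only \emph{polynomially} at infinity and the exponential term decays. This polynomial bound $F(t)=O(t^{n-1})$ is exactly what makes the radial coefficient $p_c(\xi_\ell)\Delta(\xi_\ell)/F(\xi_\ell)$ bounded below and hence $d^g(p_0,p)\gtrsim \xi_\ell(p)$; a profile growing like $e^{\lambda x}$ with $\lambda>0$ would make the end have \emph{finite} distance and the metric incomplete.

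Two further gaps: (i) irreducibility is not ``automatic'' from having $\ell\ge 2$ distinct non-constant eigenvalues --- the $a=0$ member of this very family is the flat metric on $\C^n$, which is reducible yet carries a Hamiltonian $2$-form of order $\ell$. The paper rules out local splitting (and proves the members of the family are pairwise non-isometric, which you do not address) by invoking the mobility theorem of Calderbank--Matveev--Rosemann: since $F$ is not a polynomial when $a\ne 0$, the space of Hamiltonian $2$-forms is exactly $2$-dimensional, while a local product would carry extra parallel ones. (ii) Smooth extension to $\R^{2n}$ does not by itself identify the complex structure with the standard one on $\C^n$; on non-compact toric manifolds this requires showing the vector fields $JX_v$ are complete (via the linear-growth criterion $\|X_v\|_g\le C d^g(p_0,\cdot)+C$), which again uses the polynomial bound on $F$.
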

By irreducible, we mean that the K\"ahler metric is not a product of K\"ahler structures  of lower dimension. We notice that the different families  of steady GKRS obtained from the above result do not contain isometric metrics.  In the case when $\ell=1, d_1=n-1$, we obtain Cao's GKRS on $\C^n$ and we show in Lemma~\ref{l:volume} that, in general, the volume of the riemannian balls of radius $r$ grows when $r\to \infty$ as $r^n$.  Furthermore,  for $n=2, \ell=2,  d_1=d_2=0$, we show that  the corresponding complete GKRS's on $\C^2$ have positive sectional curvature. 
 \begin{cor}\label{c:Cao-conjecture} $\C^2$ admits a $1$-parameter family of complete irreducible steady GKRS's with ${\rm U}(1) \times {\rm U}(1)$-symmetry, which have positive sectional curvature and whose scalar curvature achieves its maximal value at the origin of $\C^2$. In particular, there are infinitely many steady GKRS's of positive sectional curvature and volume growth $r^2$,  which are not isometric to Cao's  ${\rm U}(2)$-invariant example.
 \end{cor}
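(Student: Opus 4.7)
The plan is to apply Theorem \ref{thm:Cao} to the partition $n=2$, $\ell=2$, $d_1=d_2=0$. Since $\ell-1=1$, this directly yields a $1$-parameter family of pairwise non-isometric complete irreducible steady GKRS's on $\C^2$ with isometry group ${\rm U}(1)\times {\rm U}(1)$, each carrying a hamiltonian $2$-form of order $2$. Three statements remain to verify: (i) positive sectional curvature throughout $\C^2$; (ii) the scalar curvature attains its maximum at the origin; and (iii) no member of the family is isometric to Cao's ${\rm U}(2)$-invariant soliton. The $r^2$ volume growth is provided by Lemma \ref{l:volume}.

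For (i) and (ii), I would work in the orthotoric coordinates naturally associated with a hamiltonian $2$-form of order $2$, as developed in \cite{ACG,ACGT}. In these coordinates the K\"ahler metric is determined by two single-variable ``momentum profile'' functions $F_1(\xi_1),F_2(\xi_2)$, and the soliton equation \eqref{steadyKRS}, combined with the boundary conditions selecting a smooth complete metric on $\C^2$, yields explicit elementary formulas for $F_1,F_2$ in terms of the moduli parameter. Both the scalar curvature and the sectional curvatures then become explicit rational functions on the orthotoric box. Statement (ii) reduces to a two-variable critical point analysis of $\Scal(\xi_1,\xi_2)$, with the origin of $\C^2$ identified with the specific vertex of the orthotoric box where the ${\rm U}(1)^2$-action degenerates. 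Statement (i) reduces to checking a finite list of polynomial inequalities in $\xi_1,\xi_2$ and the moduli parameter.

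For (iii), the cleanest argument uses the rigidity of the hamiltonian $2$-form construction on an irreducible K\"ahler manifold. Cao's ${\rm U}(2)$-invariant soliton on $\C^2$ corresponds to the $\ell=1$, $d_1=1$ case of Theorem \ref{thm:Cao}, so its canonical hamiltonian $2$-form has order $1$; any isometry with one of our order-$2$ examples would identify two hamiltonian $2$-forms of distinct orders, contradicting the essential uniqueness of such a form (up to affine transformations) on an irreducible manifold. Equivalently, the cohomogeneity-two nature of the orthotoric description prevents the ${\rm U}(1)\times{\rm U}(1)$-action from extending to a ${\rm U}(2)$-action, which by Cao's uniqueness theorem would force the metric to equal Cao's. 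Together with the pairwise non-isometry already granted by Theorem \ref{thm:Cao}, this establishes the ``infinitely many'' claim.

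The principal technical obstacle is (i). Unlike the scalar curvature, full sectional curvature positivity is not implied by Ricci positivity and must be verified pointwise for every $2$-plane. In orthotoric coordinates this amounts to showing that a family of polynomial expressions in $\xi_1,\xi_2$ and the one-dimensional moduli parameter are strictly positive on the orthotoric box---a delicate calculation which can fail for other partitions and is specific to the low-dimensional structure $n=2,\,\ell=2$. Once this positivity is secured, (ii) and (iii) follow by direct inspection of the explicit profiles and by the hamiltonian $2$-form rigidity argument respectively.
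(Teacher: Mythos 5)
Your overall strategy --- specialize Theorem \ref{thm:Cao} to $\ell=2$, $d_1=d_2=0$, write the metric in orthotoric form \eqref{Cao-extension} with an explicit profile $F$, check the curvature directly, and rule out an isometry with Cao's example via the essential uniqueness of the hamiltonian $2$-form (Theorem \ref{thm:CMR}), whose order distinguishes $\ell=2$ from $\ell=1$ --- is exactly the paper's, and your parts (iii) and the volume growth via Lemma \ref{l:volume} are handled correctly. The gap is in (i), which is the entire substance of the corollary and which you defer to ``a finite list of polynomial inequalities in $\xi_1,\xi_2$.'' There are two problems with this. First, the profile is $F(t)=(1-e^{-2a})t-(1-e^{-2at})$, so every curvature quantity involves $e^{-2a\xi_1}$ and $e^{-2a\xi_2}$; nothing is polynomial or rational on the orthotoric box, and the paper's positivity proofs in the Appendix proceed by exhibiting power series in $x=\xi_2-\xi_1$ with (eventually) nonnegative coefficients. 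Second, and more seriously, positivity of the sectional curvature is a statement about \emph{every} $2$-plane, i.e.\ a minimization over a $4$-dimensional Grassmannian at each point, and you supply no mechanism for reducing this to finitely many scalar inequalities. The missing idea is Lemma \ref{sec}: the orthotoric metric is ambi-K\"ahler with ambihermitian Ricci tensor \cite{ACG2}, so the curvature operator is simultaneously block-diagonal on the splitting of $\Lambda^2$ induced by the two opposite complex structures, and the pointwise minimum of the sectional curvature equals the minimum over $(t_1,t_2)\in[-1,1]^2$ of an explicit quadratic $f_s(t_1,t_2)$ whose coefficients are $\Scal_g$, the conformal scalar curvature $\kappa_I$ of the opposite Hermitian structure, and $\lambda=\tfrac14\|Ric_0\|_g$. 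Without this (or an equivalent) reduction, step (i) does not go through.

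A minor further difference: for (ii) you propose a direct two-variable critical-point analysis of $\Scal$, whereas the paper argues more softly that $0<\Scal_g\le 4a^2(1-e^{-2a})$ with $\Scal_g\to0$ at infinity (so the maximum is attained), and then invokes \cite{Cao-Hamilton} and \cite{bryant} to identify the unique critical point of $\Scal_g$ with the minimum of the strictly convex soliton potential $a(\xi_1+\xi_2)$, namely the torus fixed point, i.e.\ the origin. Either route is viable, but the paper's avoids analyzing the transcendental critical-point equations directly.
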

 
 \smallskip
 Our second series of examples, which we call of \emph{Taub-NUT type},  arises from a partition
 \[ n = \ell + \sum_{j=1}^{\ell-1} d_j, \qquad  \ell \ge 2, \quad d_j \ge 0.\]
 In this case we have
 \begin{thm}[Taub-NUT type GKRS]\label{thm:Taub-NUT} Given a partition of the integer $n\ge 2$ as above, there exists an $(\ell-1)$-dimensional family of non-isometric, irreducible complete steady GKRS on $\C^n$, all admitting a hamiltonian $2$-form of order $\ell$ and isometry group ${\rm U}(1)\times \prod_{j=1}^{\ell-1} {\rm U}(d_j+1)$.  An $(\ell-2)$-dimensional sub-family consists of complete Ricci-flat K\"ahler metrics on $\C^n$.  \end{thm}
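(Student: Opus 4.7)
The plan is to construct these metrics by adapting the Hamiltonian $2$-form ansatz of \cite{ACG,ACGT} used for Theorem~\ref{thm:Cao}, replacing one of the bounded $\mathrm{U}(d_j+1)$-blocks by an unbounded Taub-NUT-type $\mathrm{U}(1)$-block to account for the extra $\mathrm{U}(1)$ factor in the isometry group. A K\"ahler structure carrying a Hamiltonian $2$-form of order $\ell$ with isometry group $\mathrm{U}(1)\times\prod_{j=1}^{\ell-1}\mathrm{U}(d_j+1)$ is put, on the dense open set where the non-constant eigenvalues $\xi_1,\dots,\xi_\ell$ of the $2$-form are pairwise distinct, into the ACGT orthotoric--semisimple normal form governed by $\ell$ one-variable functions $F_j(\xi_j)$. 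The $\mathrm{U}(d_j+1)$-blocks for $j<\ell$ correspond to bounded intervals for $\xi_1,\dots,\xi_{\ell-1}$ closing off with ball-type collapses, while the extra $\mathrm{U}(1)$ forces $\xi_\ell$ to range over a half-line $[\alpha_\ell,\infty)$.

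With this ansatz the steady GKRS equation \eqref{steadyKRS} decouples into an ODE system for the $F_j$. Taking the soliton potential $f$ to be a linear combination of the momenta $\xi_j$ (the only option compatible with the ansatz and the symmetry) and substituting the ACG/ACGT expressions for the Ricci form and the Lie derivative, each $F_j$ is seen to satisfy a first-order linear ODE with integrating factor $e^{-c\xi_j}$ for a universal constant $c$, degenerating to a polynomial ODE when $c=0$. These can be integrated in closed form, yielding a finite-dimensional family of candidate local K\"ahler metrics depending on the integration constants and on the endpoints of the $\xi_j$-intervals.

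Completeness and identification of the underlying complex manifold with $\C^n$ follow by imposing the correct boundary data. At each interior endpoint separating consecutive $\xi_j$-intervals the ACGT orthotoric matching automatically ensures smoothness; at each outer endpoint of $\xi_j$ with $j<\ell$, vanishing of $F_j$ with prescribed first derivative yields the smooth collapse of a $\mathrm{U}(d_j+1)$-orbit to a ball bolt; and at the finite endpoint of $[\alpha_\ell,\infty)$ the analogous vanishing of $F_\ell$ produces a smooth collapse of the $\mathrm{U}(1)$-orbit, i.e.\ a Taub-NUT cigar closure. Counting free constants against these conditions leaves an $(\ell-1)$-parameter moduli of smooth K\"ahler metrics; completeness is read off from the explicit growth of $F_\ell$ at infinity, and the identification with $\C^n$ follows from the ACGT equivariant compactification applied to the closed momentum polytope together with the non-compact $\xi_\ell\to\infty$ direction, exhibiting the space as an iterated blow-down of a toric-type completion biholomorphic to $\C^n$. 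Restricting to $c=0$ (i.e.\ requiring the soliton vector to vanish) picks out the Ricci-flat subfamily, whose dimension drops by one to $\ell-2$.

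The main obstacle I anticipate is the simultaneous positivity of each $F_j$ on its open interval together with the correct asymptotic behavior of $F_\ell$ at infinity: unlike the Cao case the $\mathrm{U}(1)$-orbit remains bounded as $\xi_\ell\to\infty$, and one must verify that the explicit $F_\ell$ has no interior zero on $[\alpha_\ell,\infty)$ and grows linearly (respectively quadratically in the Ricci-flat case), so as to produce a genuine Taub-NUT-type end of the required topology. Irreducibility of each metric in the family follows from the non-constancy of all $\xi_j$, which is enforced by the chosen boundary data.
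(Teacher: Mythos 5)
There is a genuine gap: your ansatz has the wrong domain structure, and this is not a cosmetic issue but the defining feature of the Taub-NUT family. You place $\xi_1,\dots,\xi_{\ell-1}$ on bounded intervals and only $\xi_\ell$ on a half-line $[\alpha_\ell,\infty)$. The paper instead takes \emph{two} unbounded intervals,
\[(\xi_1,\ldots,\xi_\ell)\in(-\infty,\alpha_1)\times(\alpha_1,\alpha_2)\times\cdots\times(\alpha_{\ell-2},\alpha_{\ell-1})\times(\alpha_\ell,+\infty),\]
with only $\ell-1$ projective factors $\PP^{d_1},\dots,\PP^{d_{\ell-1}}$ attached to the finite values $\alpha_1,\dots,\alpha_{\ell-1}$, the value $\alpha_\ell$ carrying the bare $\mathrm{U}(1)$, and $F_1=\cdots=F_{\ell-1}=P(t)=\lambda\prod_{j=1}^{\ell-1}(t-\alpha_j)^{d_j+1}$ a genuine polynomial of degree $n-1$ while $F_\ell=P(t)+ce^{-2at}$ with $c=-e^{2a\alpha_\ell}P(\alpha_\ell)\neq 0$. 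With your interval pattern there are only two possibilities: either the half-line shares its finite endpoint with the last bounded interval, in which case you have reproduced verbatim the Cao-type construction of Theorem~\ref{thm:Cao} for the partition $n=\ell+\sum_{j=1}^{\ell-1}d_j+0$ (which indeed has isometry group $\mathrm{U}(1)\times\prod_{j=1}^{\ell-1}\mathrm{U}(d_j+1)$, but volume growth $r^n$ rather than $r^{2n-1}$ and is a different family); or it does not, in which case the fibre momentum image is an $\ell$-dimensional polyhedron with $\ell+1$ facets, so the total space cannot be the standard cone $\Pol_n$ and the manifold is not $\C^n$.

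The most concrete place where this failure shows up is your last claim. In the Cao-type configuration the boundary conditions force all $F_j$ to equal one function $F$, and when the soliton field vanishes the unique admissible solution is $F(t)=\lambda\prod_j(t-\alpha_j)^{d_j+1}$, which by \cite[Prop.~17]{ACG} gives the \emph{flat} metric; so "setting the soliton vector to zero" in your setup produces no Ricci-flat subfamily at all beyond the flat metric. In the paper's construction the non-flat Ricci-flat metrics at $a=0$ exist precisely because $F_\ell(t)=P(t)-P(\alpha_\ell)\neq F_1(t)=P(t)$, which forces the mobility to be $2$ (Theorem~\ref{thm:CMR}) and rules out flatness; and it is the end $\xi_1\to-\infty$, along which $F_1=P$ grows polynomially of degree $n-1$, that produces the Taub-NUT asymptotics (volume growth $r^{2n-1}$, and a soliton field $a\sum\lambda_iX_i$ with $\prod\lambda_i<0$). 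Completeness also requires two separate distance estimates, one for $\xi_\ell\to+\infty$ and one for $\xi_1\to-\infty$, not just the growth of $F_\ell$ at infinity. For $n=2$, $\ell=2$, $d_1=0$ your construction would land on the Cao-type $\mathrm{U}(1)\times\mathrm{U}(1)$ solitons of Corollary~\ref{c:Cao-conjecture} rather than on the Taub-NUT deformation of Corollary~\ref{c:Taub-NUT}.
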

We show that the above complete metrics have a volume growth of riemannian balls of radius $r$ (as $r\to \infty$) of  the rate  of $r^{2n-1}$ (see Lemma~\ref{l:volume}).  When $n\ge 3$,  the explicit complete Calabi-Yau metrics on $\C^n$ seem not to have been observed in the mathematical literature. Indeed, they have a  different volume growth  than the Taub-NUT type Calabi-Yau K\"ahler metrics recently constructed on $\C^3$  by Y. Li  \cite{Li} (see in particular Corollary 2.27 in \cite{Li}), as well as the Calabi-Yau examples with euclidean volume growth on $\C^n$ found in \cite{CR,Li0,Sz}.   In the case  $n=2, \,  \ell=2, \, d_1=0$,   the Ricci-flat metric corresponds to the Taub-NUT metric on $\C^2$~\cite{LeBrun}. We thus get
 \begin{cor}\label{c:Taub-NUT} $\C^2$ admits a one-parameter family of complete ${\rm U}(1)\times {\rm U}(1)$-invariant steady GKRS's, including the Taub-NUT Ricci-flat K\"ahler metric.  \end{cor}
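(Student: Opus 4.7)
The plan is to deduce Corollary \ref{c:Taub-NUT} as a direct specialization of Theorem \ref{thm:Taub-NUT} together with a concrete identification of the resulting Ricci-flat member with LeBrun's description of the Taub-NUT metric on $\C^2$. First I would set $n=2$, $\ell=2$, $d_1=0$ in the partition $n=\ell+\sum_{j=1}^{\ell-1}d_j$; this is allowed since $2=2+0$. Theorem \ref{thm:Taub-NUT} then immediately produces an $(\ell-1)=1$-dimensional family of non-isometric complete irreducible steady GKRS on $\C^2$ with isometry group ${\rm U}(1)\times {\rm U}(d_1+1)={\rm U}(1)\times {\rm U}(1)$, yielding the existence statement of the corollary.

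Next, I would invoke the second part of Theorem \ref{thm:Taub-NUT}, which guarantees an $(\ell-2)=0$-dimensional sub-family of complete Ricci-flat K\"ahler metrics inside the above family, i.e. a single distinguished member which is Calabi--Yau on $\C^2$ with ${\rm U}(1)\times{\rm U}(1)$ symmetry. The goal of the remaining step is to show that this Ricci-flat K\"ahler metric is isometric to the Taub-NUT metric on $\C^2$ constructed by LeBrun in \cite{LeBrun}.

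For the identification I would compare the two structures as toric K\"ahler metrics. The Taub-NUT metric on $\C^2$ of \cite{LeBrun} is the unique (up to homothety) complete Ricci-flat K\"ahler metric on $\C^2$ with ${\rm U}(1)\times{\rm U}(1)$-isometry whose asymptotic volume growth is of order $r^{3}$ (i.e.\ $r^{2n-1}$ with $n=2$). On the other hand, the Ricci-flat member of our family is a complete Calabi--Yau K\"ahler metric on $\C^2$ with the same toric symmetry; and Lemma \ref{l:volume} provides the volume growth $r^{2n-1}=r^3$ needed to match the Taub-NUT asymptotics rather than the Euclidean $r^{4}$ or ALE $r^{4}$ growth. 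Combined with LeBrun's uniqueness result, this forces the identification.

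The main obstacle, as I see it, is the last step: verifying carefully that the Calabi--Yau member produced by the Hamiltonian $2$-form ansatz genuinely realizes the Taub-NUT symplectic potential (and not, say, a flat or trivial solution), and matching the one free parameter of the family with the single scaling/Taub-NUT parameter. This amounts to checking that, in the momentum coordinates arising from the hamiltonian $2$-form of order $\ell=2$, the ODE satisfied by the defining polynomial degenerates precisely to the Taub-NUT profile when the Ricci-soliton vector field vanishes; the remaining steady GKRS's in the $1$-parameter family are then genuine non-K\"ahler-Einstein deformations, which together with the Ricci-flat case give the full corollary.
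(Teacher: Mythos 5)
Your first two steps are exactly the paper's: Corollary \ref{c:Taub-NUT} is obtained by specializing the construction of Section \ref{s:Taub-NUT} (i.e.\ Theorem \ref{thm:Taub-NUT}) to $n=2$, $\ell=2$, $d_1=0$, which yields the one-parameter family of complete ${\rm U}(1)\times{\rm U}(1)$-invariant steady GKRS's, with the single Ricci-flat member sitting at $a=0$. Where you diverge is in identifying that Ricci-flat member with the Taub-NUT metric, and here your argument has a genuine gap. You invoke a uniqueness statement of the form ``the Taub-NUT metric is the unique (up to homothety) complete Ricci-flat K\"ahler metric on $\C^2$ with ${\rm U}(1)\times{\rm U}(1)$-isometry and volume growth $r^3$,'' attributing it to \cite{LeBrun}. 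No such classification is proved in \cite{LeBrun}, which only \emph{constructs} the metric; to get uniqueness along these lines you would need the classification of ALF gravitational instantons, plus an argument that mere cubic volume growth of a complete Ricci-flat K\"ahler metric forces ALF asymptotics of the right type (which requires curvature decay, not just volume growth). None of that is available from the results in this paper, so the identification step as you state it does not close.

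The paper avoids this entirely by a direct computation: at $a=0$ the orthotoric data degenerate to $F_1(t)=t$, $F_2(t)=t-1$, and one computes explicitly the pluriharmonic coordinates, the relations $|z_1|^2=-e^2e^{-(\xi_1+\xi_2)}\xi_1$, $|z_2|^2=e^{(\xi_1+\xi_2)}(\xi_2-1)$, and the global K\"ahler potential $H(\xi_1,\xi_2)=\tfrac12\bigl((\xi_1-1)^2+\xi_2^2\bigr)$, which matches LeBrun's explicit expression for the Taub-NUT metric (alternatively one cites \cite{Gauduchon-Taub-NUT}, which identifies the orthotoric ansatz with these $F_j$ as Taub-NUT). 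Your closing remark --- checking that the ODE for the profile degenerates to the Taub-NUT data when the soliton vector field vanishes --- is essentially this correct route; you should carry that computation out rather than rely on the volume-growth uniqueness argument. The appeal to Lemma \ref{l:volume} is fine as a consistency check but cannot substitute for the identification.
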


\smallskip
The paper is organized as follows. In Section~\ref{s:toric},  we review the general theory~\cite{Abreu1, guillemin} of invariant K\"ahler metrics on toric manifolds in adapted momentum-angular coordinates. In particular, we recast the steady GKRS equation and provide necessary and sufficient conditions for the corresponding complex structure to be standard, i.e. arising from the fan. Section~\ref{s:calabi-type} recasts the well-studied steady GKRS's obtained from the Calabi ansatz. Here we introduce Cao's example on $\C^n$ from the point of view of the profile function~\cite{HS}, and present its toric description. Section~\ref{s:hfkg} introduces the  general form of K\"ahler metrics with a hamiltonian $2$-form of order $\ell\ge 1$ (obtained in \cite{ACGT1}) which we use in this paper as an ansatz for the construction of explicit GKRS's (Lemma~\ref{l:KRS-ell}). This construction includes the Calabi ansatz as the special case $\ell=1$.  We then study the hamiltonian $2$-form ansatz with suitable initial data and, using the general toric formalism developed in Section~\ref{s:toric}, we show that it defines the desired global metrics on $\C^n$, completing the proofs of Theorems~\ref{thm:Cao} and \ref{thm:Taub-NUT}. Finally, in Section~\ref{s:D2} we specialize our construction to $\C^2$ and $\ell=2$, in which case the new families of GKRS become  \emph{orthotoric}~\cite{ACG0} and are thus a special case of the \emph{ambitoric} structures classified in \cite{ACG2}. This observation allows us to spell out explicitly various geometric quantities, such as the soliton vector field, and the global K\"ahler potential of the metrics. This also allows us to perform exhaustive curvature computations,  presented in the Appendix, and thus to complete the proofs of  the Corollaries~\ref{c:Cao-conjecture} and \ref{c:Taub-NUT}.

\section{Toric K\"ahler metrics from the Delzant construction}\label{s:toric} 
In this section we collect some known facts about $\T^n$-invariant K\"ahler metrics defined on  a not necessarily compact symplectic $2n$-manifold $(M, \omega_0)$.  

We start with a vector space $\tor$ of real dimension $n$, endowed with a lattice $\Lambda \subset \tor$ of maximal rank $n$, and denote by $\T^n= \tor/ 2\pi \Lambda$ the corresponding compact torus. We consider a (possibly unbounded) simple closed convex polyhedron $\Pol \subset \tor^*$  of finite type which satisfies the Delzant condition. This means that  
\begin{dfn}\label{d:delzant} Suppose $\Pol$ is a polyhedron in $\tor^*$. It is called a \emph{Delzant polyhedron} with respect to the lattice $\Lambda \subset \tor$ if
\begin{enumerate}
\item[$\bullet$] $\Pol$ is generated by a \emph{finite} number of affine-linear functions \newline $L_1(x), \ldots , L_d(x)$ (called \emph{labels}), i.e. 
\[\Pol =\{ x \in \tor^* \, | \, L_j(x) \ge 0, j=1, \ldots, d\}.\]
\item[$\bullet$] $\Pol$ contains a finite non-zero number of points (vertices) such that  at each vertex $v$, there are precisely $n$ labels $L_1, \ldots, L_n$ vanishing at $v$  and, furthermore,   $\{ dL_1, \ldots,  dL_n\}$ is a basis of $\Lambda$.
\end{enumerate}
\end{dfn}
Under the above assumptions, the Delzant construction~\cite{Delzant}  associates to $(\Pol, \Lambda)$ a smooth symplectic  $2n$-manifold $(M, \omega_0)$  such that $\T^n$  acts in hamiltonian way with momentum map 
\[\mu: M \to  \tor^*, \qquad \mu(M)=\Pol.\]
\begin{ex}\label{e:Rn}
The basic example is to take
 \[\Pol_n:=\{ x\in \R^n \, | \, x_j\ge 0, \, j=1, \ldots, n\}.\]
  In this case $\Lambda=\Z^n$ and the Delzant construction gives $\R^{2n}$ endowed with its standard symplectic form $\omega_0$.
\end{ex}
The so-called \emph{Abreu--Guillemin theory}~\cite{Abreu1,guillemin} describes, in the compact case,  the set of $\T^n$-invariant K\"ahler metrics on $M$ which are compatible with the fixed symplectic form $\omega_0$ in terms of convex smooth functions defined on the interior $\mathring{\Pol}$ of $\Pol$. The works \cite{Abreu-Sena-Dias, cifarelli-uniqueness, Sena-Dias}  extend this theory to the non-compact case.  

The starting point is that the Delzant construction endows $(M, \omega_0)$ with a $\T^n$-invariant  $\omega_0$-compatible K\"ahler metric $g_0$ and complex structure $J_0$, such that in suitable momentum-angular $(d\mu_j, dt_j)$ coordinates defined on $M^0:= \mu^{-1}(\mathring{\Pol})$, one has~\cite{guillemin}
\begin{equation}\label{canonical}
\begin{split}
g_0=& \sum_{i,j=1}^n \left({\rm Hess}(u_0)\right)_{ij} d\mu_id\mu_j + \left({\rm Hess}(u_0)\right)^{-1}_{ij} dt_i dt_j, \\
\omega_0 =& \sum_{i=1}^n d\mu_i \wedge dt_i,
\end{split} \end{equation}
where $u_0(x)$ is a smooth convex function on $\mathring{\Pol}$\footnote{Throughout the paper, we fix a basis $(e_i)$ of $\tor$ (with the corresponding dual basis $(e^{i})$ of $\tor^*$),  and denote  by $x=(x_1, \ldots, x_n)$ the corresponding coordinates of $\tor$. Thus,  in the above expression, $\mu_j = \langle \mu , e_i \rangle = \mu^*(x_i)$ .}
\[ u_0(x):= \frac{1}{2} \sum_{j=1}^d L_j(x) \log L_j(x). \]
 The K\"ahler metric $(g_0, \omega_0, J_0)$ will be referred to as the \emph{canonical} K\"ahler structure.
Notice that on $(\R^{2n}, \omega_0)$ (see Example~\ref{e:Rn}), the canonical K\"ahler structure is  just the flat K\"ahler structure on $\C^n$.

Following \cite{Abreu1, guillemin}, one can use the form of \eqref{canonical} to construct more $\omega_0$-compatible $\T^n$-invariant  K\"ahler  structures on $M$ by letting $u(x)$ be a strictly convex smooth function on $\mathring{\Pol}$ and considering on $M^0$ the tensor
\begin{equation}\label{toric}
g:= \sum_{i,j=1}^n \left({\rm Hess}(u)\right)_{ij} d\mu_id\mu_j + \left({\rm Hess}(u)\right)^{-1}_{ij} dt_i dt_j.
\end{equation}
An elementary calculation shows that $g$ is $\omega_0$-compatible and the corresponding almost complex structure $J$ is integrable. However, $(g, J)$ are only defined on $M^0$. Sufficient conditions for the smooth extension to $M$ are given in \cite{Abreu-Sena-Dias}, see also the proof of  Lemma~2 and  Remark 4(ii) in \cite{ACGT1}. 
\begin{prop}\label{compactification} Suppose $u(x)$ is a strictly convex smooth function on $\mathring{\Pol}$ such that
\begin{enumerate}
\item[$\bullet$] $u-u_0$ extends to a smooth function on $\Pol$;
\item[$\bullet$] ${\rm Hess}(u) \circ {\rm Hess}(u_0)^{-1}$ extends to a smooth non-degenerate matrix-valued function on $\Pol$.
\end{enumerate}
Then $(g, J)$ is a globally defined,  smooth  $\T^n$-invariant K\"ahler structure on $M$ which is compatible with $\omega_0$.
\end{prop}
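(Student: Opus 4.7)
The strategy is to compare the K\"ahler structure associated to $u$ with the canonical one $(g_0,J_0,\omega_0)$ from the Delzant--Guillemin construction, which is already smooth on all of $M$. Since both metrics are $\omega_0$-compatible, it is enough to show that $g - g_0$, defined and smooth on $M^0$ via \eqref{toric} and \eqref{canonical}, extends smoothly to a symmetric $2$-tensor on $M$. Writing out the difference,
\[
g - g_0 = \sum_{i,j}\bigl({\rm Hess}(u-u_0)\bigr)_{ij}\, d\mu_i\, d\mu_j + \sum_{i,j}\Bigl(({\rm Hess}(u))^{-1}_{ij}-({\rm Hess}(u_0))^{-1}_{ij}\Bigr)\,dt_i\,dt_j,
\]
I would treat the ``horizontal'' and ``vertical'' summands separately.

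The horizontal summand is immediate from the first hypothesis: $({\rm Hess}(u-u_0))_{ij}$ extends smoothly to $\Pol$ and the $d\mu_i$ are globally smooth $1$-forms on $M$. The vertical summand is the main obstacle, since neither the coefficients $({\rm Hess}(u))^{-1}_{ij}$ nor the co-vectors $dt_i$ individually extend across $\mu^{-1}(\partial\Pol)$; they must be combined so that their singularities cancel, just as they do for $g_0$ itself. The key device is to introduce the $1$-forms $\gamma_k := g_0(\partial_{t_k},\cdot) = \sum_i ({\rm Hess}(u_0))^{-1}_{ki}\, dt_i$ on $M^0$, which extend smoothly to all of $M$ since $\partial_{t_k}$ are the smooth fundamental vector fields of the $\T^n$-action and $g_0$ is smooth.

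Using the identity $({\rm Hess}(u))^{-1} - ({\rm Hess}(u_0))^{-1} = -({\rm Hess}(u))^{-1}\, {\rm Hess}(u-u_0)\, ({\rm Hess}(u_0))^{-1}$ together with the factorization $({\rm Hess}(u))^{-1} = ({\rm Hess}(u_0))^{-1} H^{-1}$ built into the definition of $H := {\rm Hess}(u)\, {\rm Hess}(u_0)^{-1}$, the vertical summand rewrites as $-\sum_{k,m}\bigl(H^{-1}\, {\rm Hess}(u-u_0)\bigr)_{km}\,\gamma_k\,\gamma_m$, a symmetric combination of the smooth $\gamma_k$ whose coefficient matrix extends smoothly to $\Pol$ under the two hypotheses---this is the essential use of the second hypothesis. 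Both summands extend smoothly, so $g$ is a smooth symmetric $2$-tensor on $M$: it remains $\omega_0$-compatible by continuity, is positive definite thanks to the positivity of $H$ on $\mathring\Pol$ and its non-degeneracy on $\Pol$, and the integrability of the associated $J$, which holds on $M^0$ by a standard computation in momentum-angular coordinates, propagates to $M$ because the vanishing of the Nijenhuis tensor is a closed condition.
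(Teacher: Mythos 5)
Your argument is correct, and it is essentially the standard one: the paper itself gives no proof of this proposition (it defers to \cite{Abreu-Sena-Dias} and to Lemma~2 and Remark~4(ii) of \cite{ACGT1}), and those references proceed exactly as you do, comparing $g$ with the canonical $g_0$ and rewriting the singular vertical part in terms of the globally smooth $1$-forms $\gamma_k=g_0(\partial_{t_k},\cdot)$ so that the second hypothesis makes the coefficient matrix $H^{-1}\,{\rm Hess}(u-u_0)$ extend. The only step stated too tersely is positive-definiteness at $\mu^{-1}(\partial\Pol)$: rather than invoking ``positivity of $H$'', it is cleaner to note that $g$ is positive semi-definite as a limit of positive-definite forms and nondegenerate because the $g_0$-self-adjoint endomorphism $A=g_0^{-1}g$ is block-conjugate to ${\rm diag}\bigl(H,H^{-1}\bigr)$ on $M^0$, so $\det A\equiv 1$ extends by continuity (equivalently, $J=\omega_0^{-1}g$ extends with $J^2=-\mathrm{Id}$, forcing nondegeneracy).
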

\begin{rem} Using that for the symplectic manifold $(M, \omega_0)$ obtained from a Delzant polyhedron $\Pol$ by the Delzant construction the momentum map $\mu: M \to \tor^*$ is \emph{proper},  it follows from \cite[p.~375]{Sena-Dias} that, conversely, \emph{any} globally defined $\T^n$-invariant $\omega_0$-compatible K\"ahler structure  on $M$ is equivariantly isometric to one of the form \eqref{toric} for a uniquely determined (up to the addition of an affine-linear function) convex smooth function $u$ on $\mathring{\Pol}$.
\end{rem}
\subsection{The complex structure}
As we shall see in the examples we present below, for a globally defined $\omega_0$-compatible toric  K\"ahler metric $(g, J)$ on  a \emph{non-compact} toric symplectic manifold $(M, \omega_0, \T^n)$ given by  the construction of Proposition~\ref{compactification},  the complex structures $J$ and $J_0$ are not in general biholomorphic. We formulate below a necessary and sufficient condition for the existence of a biholomorphism identifying $J$ and $J_0$, see \cite[Lemma 2.14]{cifarelli-uniqueness} and \cite[Remark 2.11]{Abreu-Sena-Dias}.
\begin{prop}\label{biholomorphic} Suppose $u$ is a strictly convex smooth function on $\mathring{\Pol}$ satisfying the conditions of Proposition~\ref{compactification} and let $J$ be the corresponding complex structure. Then $(M, J)$ is $\T^{n}$-equivariantly biholomorphic to $(M, J_0)$ if and only if for any fundamental vector field $X_v, \, v\in \tor$, the vector field $JX_v$ is complete. In this case, $(M, J)$ admits an induced holomorphic $(\C^{\times})^n$-action such that it is the toric variety classified by the fan of $\Pol$.
\end{prop}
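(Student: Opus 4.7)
The plan is to establish both implications; the reverse one carries the substance.

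For the forward implication, suppose $\phi \colon (M, J) \to (M, J_0)$ is a $\T^n$-equivariant biholomorphism. Since $(M, J_0)$ is the smooth toric variety of the fan of $\Pol$ (by the Delzant construction), its $\T^n$-action extends to a holomorphic $(\C^\times)^n$-action, and in particular each vector field $J_0 X_v$ is complete. The equivariance of $\phi$ gives $\phi_* X_v = X_v$, while its holomorphy gives $\phi_* J X_v = J_0 X_v$, so completeness transfers to $JX_v$.

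For the reverse implication, I first verify that $\{X_v, JX_v : v \in \tor\}$ spans an abelian Lie algebra of real holomorphic vector fields on $(M, J)$. The brackets $[X_v, X_w]$ vanish because $\T^n$ is abelian; since $X_v$ is Killing and hence $J$-preserving, $[X_v, JX_w] = J[X_v, X_w] = 0$; finally, the vanishing Nijenhuis tensor together with $\cL_{X_v}J = 0$ forces $[JX_v, JX_w] = 0$. Under the completeness hypothesis, Palais' integration theorem yields a holomorphic action of the simply-connected abelian Lie group with Lie algebra $\tor \oplus J\tor \simeq \C^n$ on $(M, J)$, and since $X_v$ for $v \in 2\pi\Lambda$ already integrates trivially, the action descends to a holomorphic $(\C^\times)^n$-action extending the given $\T^n$-action.

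Next, I analyze this $(\C^\times)^n$-action on $M^0$ in Legendre dual coordinates. From \eqref{toric} and the compatibility $g(J\cdot,\cdot) = \omega_0(\cdot,\cdot)$, one computes $J\partial/\partial t_i = -\sum_k \bigl({\rm Hess}(u)\bigr)^{-1}_{ik}\partial/\partial \mu_k$. Setting $y_i := \partial u/\partial \mu_i$ on $\mathring\Pol$, this becomes $J\partial/\partial y_i = \partial/\partial t_i$, so $w_i := y_i + it_i$ are local holomorphic coordinates and $\zeta_i := \exp(y_i + it_i)$ realize $M^0$ as a $\T^n$-equivariantly embedded open subset $\Omega \subseteq (\C^\times)^n$. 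In these coordinates the flow of $JX_v$ is the real dilation $\zeta \mapsto (e^{-sv_1}\zeta_1, \ldots, e^{-sv_n}\zeta_n)$, so completeness of all such flows forces $\Omega$ to be invariant under the full $(\C^\times)^n$-action. Being nonempty and open, $\Omega = (\C^\times)^n$, yielding a $\T^n$-equivariant biholomorphism $(M^0, J) \to (M^0, J_0)$.

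It remains to extend this biholomorphism across the lower-dimensional strata. The $(\C^\times)^n$-action on $(M, J)$ constructed above is effective (the $\T^n$-subaction being so), has the open dense orbit $M^0 \cong (\C^\times)^n$, and its orbit closures are prescribed by the same combinatorial data — the faces of $\Pol$ and its fan — as those of $(M, J_0)$. By the classification of smooth normal complex toric varieties, $(M, J)$ is then equivariantly biholomorphic to the toric variety of the fan of $\Pol$, which is $(M, J_0)$. Alternatively, one can argue locally at each $\T^n$-fixed point by invoking the Delzant chart, in which $(M, \omega_0)$ is equivariantly modeled on a neighborhood of $0 \in \C^n$, and appealing to rigidity of equivariant holomorphic maps. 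This last extension step is the main obstacle, since it demands that the biholomorphism on $M^0$ extend smoothly across the strata where $\T^n$-orbits degenerate — precisely the setting in which the two compactification conditions on $u$ from Proposition~\ref{compactification} are essential.
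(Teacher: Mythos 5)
Your forward implication, the construction of the abelian algebra $\{X_v, JX_v\}$ of real holomorphic vector fields, the integration to a holomorphic $(\C^\times)^n$-action via Palais, and the identification of $(M^0,J)$ with $(\C^\times)^n$ through $\zeta_j=e^{y_j+\sqrt{-1}t_j}$ are all correct and follow the same route as the references the paper defers to (\cite[Lemma 2.14]{cifarelli-uniqueness}, \cite[Remark 2.11]{Abreu-Sena-Dias}); note that the paper gives no proof of its own beyond these citations and the sketch in the Remark following Corollary~\ref{surjectivegradient}. One small point you should make explicit: completeness of $JX_v$ is assumed on $M$, and you use it on $M^0$; this is fine because the flow of $JX_v$ commutes with the $\T^n$-action and hence preserves the orbit-type stratification, so $M^0$ is flow-invariant.

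The genuine gap is the final extension across the degenerate strata, which you correctly identify as the crux but do not actually prove. Your primary argument --- that $(M,J)$ with its $(\C^\times)^n$-action ``is classified by the fan of $\Pol$'' --- presupposes two things that are not verified: first, that a complex manifold with an effective holomorphic $(\C^\times)^n$-action and dense orbit is a toric variety in the sense to which the fan classification applies (this is false in general: a Hopf surface carries a holomorphic $(\C^\times)^2$-action with dense orbit and is not a toric variety, so some additional structure must be invoked); second, that the orbit closures and isotropy data of the action you built on $(M,J)$ actually coincide with those dictated by the fan of $\Pol$ --- this is exactly the content to be proved, not an input. The workable completion is your ``alternative'': for each face of $\Pol$ one must show that the equivariant map $\Phi_v(z)= (e^{y_j+\sqrt{-1}t_j})$ extends to a holomorphic chart $\C^k\times(\C^\times)^{n-k}\to M$ onto a neighbourhood of the corresponding stratum, and that these charts glue with the standard toric transition maps. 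This is where both hypotheses of Proposition~\ref{compactification} are consumed: $u-u_0\in C^\infty(\Pol)$ gives the asymptotics $y_j=\tfrac12\log L_j(x)+(\text{smooth})$ near the facet $\{L_j=0\}$, so that $|z_j|^2=L_j\cdot e^{(\text{smooth})}$ extends continuously (indeed smoothly) by $0$ across the stratum, while the non-degeneracy of ${\rm Hess}(u)\circ{\rm Hess}(u_0)^{-1}$ is what makes the extended map an immersion, hence a local biholomorphism, at boundary points. Without carrying out this local computation the statement is not established; with it, the appeal to any global classification theorem becomes unnecessary, since equivariant holomorphic maps agreeing on the dense orbit agree everywhere.
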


Suppose $(g, J)$ satisfies the conditions of Proposition \ref{biholomorphic},  so that there exists an equivariant biholomorphism $\beta:(M, J_0) \to (M, J)$.  By setting $\omega = \beta^*\omega_0$, we have an identification of K\"ahler structures $(M, \omega_0, J) \cong (M, \omega, J_0)$.  Choose a base point $p \in M^0$ and a basis $(X_1, \dots, X_n)$ of $\Lambda \subset \tor$, which together determine an equivariant $J_0$-biholomorphism $M^0 \cong (\C^\times)^n$.  By a result of Guillemin \cite{guillemin},  there  exists a $\T$-invariant K\"ahler potential $H \in C^\infty( (\C^\times)^n)$ such that 
\[ \omega|_{M^0} = dd^c H. \]
By construction, our $J_0$-biholomorphsim sends the $ (\C^\times)^n$-action on $M^0$ to the standard one on $ (\C^\times)^n$.  Therefore the $\T$-invariance of $H$ implies that it can be viewed as the pullback of a smooth convex function $H(y)$ on $\R^n$ (we use exponential coordinates  $z_i= e^{y_i + \sqrt{-1}t_i}$ on the $i$-th $\C^{\times}$ factor). Notice that this description is inherently complex structure dependent, so it is crucial that we have pulled back by $\beta$.  Then a key observation going back to Guillemin \cite{guillemin} (see also \cite{abreu0}) is: 
\begin{prop}\label{legendretransform} Viewing $H(y)$ as a smooth function on $\R^n$,  up to the addition of an affine-linear function in $y$,  the gradient
 \[ x:=\nabla H =\left(\frac{\partial H}{\partial y_1}, \ldots, \frac{\partial H}{\partial y_n}\right): \R^n \to \R^n\] 
 has the property that its image is $\mathring{\Pol}$. Moreover, up to the addition of an additional affine-linear function on $\Pol$, the symplectic potential $u(x)$ is the Legendre transform of $H(y)$, i.e. $u$ and $H$ satisfy $H(y) + u(x) = \sum_{i=1}^n x_i y_i$.
\end{prop}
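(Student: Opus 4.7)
The plan is to exploit the two natural coordinate systems on $M^0$ that share the angular variables $(t_1,\ldots,t_n)$: the momentum-angular coordinates $(\mu_i, t_i)$, and the logarithmic holomorphic coordinates $(y_i, t_i)$ coming from $z_i = e^{y_i + \sqrt{-1}\, t_i}$ on $(\C^\times)^n$. The change of coordinates from $(y, t)$ to $(\mu, t)$ will turn out to be induced precisely by the Legendre transformation relating $H$ and $u$.

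First, I would express $\omega = dd^c H$ in $(y, t)$ coordinates. A direct $\partial\bar\partial$-computation, using $\frac{dz_j}{z_j} = dy_j + \sqrt{-1}\, dt_j$ together with the symmetry of $H_{jk} := \partial^2 H/\partial y_j \partial y_k$, gives
\begin{equation*}
\omega \;=\; \sum_{j,k=1}^{n} H_{jk}(y)\, dy_j \wedge dt_k,
\end{equation*}
the $dy_j \wedge dy_k$ and $dt_j \wedge dt_k$ contributions cancelling. Since the torus action on $(\C^\times)^n$ is generated by the $\partial/\partial t_i$'s, the momentum map satisfies $d\mu_i = -\iota_{\partial_{t_i}}\omega = \sum_j H_{ij}\, dy_j = d(\partial H/\partial y_i)$, so $\mu_i = \partial H/\partial y_i + c_i$ for constants $c_i$. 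Absorbing these constants via an affine-linear modification of $H$ yields $\mu = \nabla H$. Strict convexity of $H$ (equivalent to positive-definiteness of the K\"ahler metric) then makes $\nabla H$ a diffeomorphism of $\R^n$ onto its image; properness of $\mu$ forces this image to coincide with $\mu(M^0) = \mathring{\Pol}$, establishing the first assertion.

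For the second assertion I would let $u$ be the Legendre transform of $H$, namely $u(\mu) := \langle \mu, y(\mu)\rangle - H(y(\mu))$ with $y(\mu) = (\nabla H)^{-1}(\mu)$, so that $H(y) + u(\mu) = \sum_i \mu_i y_i$ whenever $\mu = \nabla H(y)$, and ${\rm Hess}(u)(\mu) = {\rm Hess}(H)(y)^{-1}$ as matrices by standard Legendre duality. It remains to identify this $u$ with the symplectic potential determined by \eqref{toric}. Using $g(\cdot,\cdot) = \omega(\cdot, J_0 \cdot)$ and the formula above for $\omega$, one finds $g = \sum_{j,k} H_{jk}(dy_j dy_k + dt_j dt_k)$ on $(\C^\times)^n$. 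Substituting $dy_j = \sum_l {\rm Hess}(u)_{jl}\, d\mu_l$ and invoking mutual invertibility of the two Hessians, the two summands collapse to $\sum_{j,k} {\rm Hess}(u)_{jk}\, d\mu_j d\mu_k$ and $\sum_{j,k} {\rm Hess}(u)^{-1}_{jk}\, dt_j dt_k$, reproducing exactly \eqref{toric}. By the remark following Proposition~\ref{compactification}, the symplectic potential is unique up to an affine-linear function on $\Pol$, giving the stated indeterminacy.

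The only genuine difficulty is bookkeeping: tracking the two layers of affine-linear ambiguity, which reflect the choices of base point $p \in M^0$ and basis $(X_1, \ldots, X_n)$ of $\Lambda$, and carrying out the linear-algebraic interchange between ${\rm Hess}(H)$-terms in $(y,t)$ and ${\rm Hess}(u)$-terms in $(\mu, t)$ consistently. Beyond strict convexity of $H$ and the elementary identities for Legendre-dual convex functions, no further analytic input is required.
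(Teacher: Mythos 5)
Your proof is correct. The paper does not actually prove Proposition~\ref{legendretransform}; it records it as a known fact and cites Guillemin and Abreu, and your argument --- computing $\omega = dd^cH = \sum_{j,k}H_{jk}\,dy_j\wedge dt_k$ in logarithmic coordinates, reading off $\mu = \nabla H$ from $\iota_{\partial_{t_i}}\omega = -d\mu_i$ up to constants, and then matching $g=\sum H_{jk}(dy_jdy_k+dt_jdt_k)$ with the form \eqref{toric} via Legendre duality of the Hessians --- is exactly the standard argument from those references, with the affine-linear ambiguities handled correctly.
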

As an immediate corollary, we have 
\begin{cor}\label{surjectivegradient}
Suppose that $u(x)$ is a symplectic potential satisfying the conditions of Propositions \ref{compactification} and \ref{biholomorphic}. Then 
\[ y:=\nabla u =\left(\frac{\partial u}{\partial x_1}, \ldots, \frac{\partial u}{\partial x_n}\right): \mathring{\Pol} \to \R^n\]
 is a diffeomorphism.
\end{cor}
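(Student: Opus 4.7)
The plan is to deduce the claim directly from Proposition \ref{legendretransform}, by exploiting the standard fact that for a smooth strictly convex function the Legendre transform realizes an inverse pair of gradient diffeomorphisms.

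First, since $u$ satisfies the hypotheses of Propositions \ref{compactification} and \ref{biholomorphic}, the corresponding K\"ahler structure $(g, J)$ on $M$ admits an equivariant biholomorphism $\beta \colon (M, J_0) \to (M, J)$. I apply Proposition \ref{legendretransform} to obtain a smooth $\T$-invariant K\"ahler potential $H(y)$ on $\R^n$, uniquely determined up to an affine-linear function, such that (i) $H$ is strictly convex, (ii) $\nabla H \colon \R^n \to \mathring{\Pol}$ has image exactly $\mathring{\Pol}$, and (iii) $u$ and $H$ are related by the Legendre duality $H(y) + u(x) = \sum_i x_i y_i$ with $x = \nabla H(y)$.

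Next, I invoke the standard algebra of the Legendre transform for smooth strictly convex functions. Differentiating the identity $H(y) + u(\nabla H(y)) = \sum_i y_i (\nabla H(y))_i$ with respect to $y$ and using the equality $x = \nabla H(y)$ yields $\nabla u(x) = y$, so the maps $\nabla H$ and $\nabla u$ are two-sided inverses of each other on the sets where they are defined. In particular, since $\nabla H$ is surjective onto $\mathring{\Pol}$ by (ii) and injective by strict convexity of $H$, it is a bijection $\R^n \to \mathring{\Pol}$, and $\nabla u$ is its set-theoretic inverse $\mathring{\Pol} \to \R^n$.

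Finally, both maps are smooth (because $u$ and $H$ are smooth on their respective open domains) and their Jacobians $\mathrm{Hess}(u)$ and $\mathrm{Hess}(H)$ are positive definite by strict convexity, hence invertible. The inverse function theorem then upgrades the bijection $\nabla u \colon \mathring{\Pol} \to \R^n$ to a diffeomorphism. The only subtle point, which is already packaged into Proposition \ref{legendretransform} and is the main substantive ingredient here, is that the image of $\nabla H$ is the full open polyhedron $\mathring{\Pol}$; everything else is a formal consequence of convex duality.
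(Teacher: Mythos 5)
Your proof is correct and follows essentially the same route as the paper's: both deduce surjectivity from Proposition \ref{legendretransform} by identifying $\nabla u$ as the inverse of $\nabla H \colon \R^n \to \mathring{\Pol}$, and then handle injectivity and smoothness via strict convexity. The only cosmetic difference is that the paper argues injectivity of $\nabla u$ directly from the strict convexity of $u$ (a degeneracy of $\mathrm{Hess}(u)$ along a segment), whereas you obtain it from the two-sided inverse relation of the Legendre pair; both are valid.
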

\begin{proof}
The fact that $y$ is surjective follows from Proposition~\ref{legendretransform} and the properties of the Legendre transform, as it is in fact inverse to the map $x = \left(\frac{\partial H}{\partial y_i}\right): \R^n \to \mathring{\Pol}$.  That $y$ is injective follows from the fact that $u$ is strictly convex on $\mathring{\Pol}$.  Indeed,  for any $b \in \R^n$, there cannot be two distinct points $x_0, x_1 \in \mathring{\Pol}$ which are zeros of the function $y(x) - b$, or else ${\rm Hess}(u)$ will have a degeneracy along the line $x_t = tx_1 + (1-t)x_0$.  Clearly $y: \mathring{\Pol} \to \R^n$ and its inverse $x: \R^n \to \mathring{\Pol}$ are smooth. 
\end{proof}

\begin{rem} One can  build the biholomorphism $\beta: (M, J_0) \cong (M, J)$ in practice. To this end,  for each vertex $v$ of $\Pol$, we translate $\Pol$ so that $v=0$ and assume (without loss) that the labels $L_1(x), \ldots, L_n(x)$ vanishing at $v=0$ are standard, i.e.  $L_j(x) = x_j$. With this normalization, we let 
\[y_j := \frac{\partial u}{\partial x_j}\]
 and observe that in the (local) coordinate system $(y_j, t_j)$ on $M^0$, we have $\frac{\partial}{\partial  y_j} = J X_j$ and $\frac{\partial}{\partial t_j} = X_j$. It thus follows that $(y_j + \sqrt{-1} t_j)$ is a $J$-holomorphic local coordinate system.  By Corollary~\ref{surjectivegradient}, we know that when we consider $y$ as a function of $x$, it maps $\mathring{\Pol}$ diffeomorphically to $\R^n$. Furthermore, it follows from the boundary condition of $u$ (near $v=0$) given in Proposition~\ref{compactification} that  $y_j = \frac{1}{2} x_j + \mathrm{smooth}$. Thus, 
\[ z_j := e^{y_j + \sqrt{-1} t_j} \]
 defines a $(\C^{\times})^n$-equivariant map $\Phi_v : (\C^{\times})^n \to M^0$. A careful inspection of the proof of Proposition~\ref{biholomorphic} detailed in \cite[Lemma 2.14]{cifarelli-uniqueness} yields that, in fact, $\Phi_v$ extends to a $(\C^{\times})^n$-equivariant chart  $\Phi_v : \C^n \to M$,  sending the origin to $v$ and each coordinate hyperplane $\{z_i=0\}$ to the pre-image of the facet $\{L_j(x)=0\}$. 
 
Notice that when $\Pol$ is the standard cone and $(M, J_0)=\C^n$, see Example~\ref{e:Rn}, the map $\Phi_0$ constructed as above is the biholomorphism $\beta$ needed. \end{rem}

\subsection{A criterion}
Below, and throughout the text, positive constants $C, C', C''$ etc. may vary from line to line.
\begin{lemma}\label{criterion} Suppose $u$ is a strictly convex smooth function on $\mathring{\Pol}$ satisfying the conditions of Proposition~\ref{compactification} and let $(g, J)$ be the corresponding K\"ahler structure. Suppose furthermore that $g$ is a complete riemannian metric and for any fundamental vector field $X_v, \, v\in \tor$, 
\begin{equation}\label{linear}
||X_v||_g \le C d^g(p_0, \cdot) + C,
\end{equation} where $C>0$ is a constant, $p_0$ is a fixed point of $M$ and $d^g$ is the metric distance of $g$. Then $(M, J)$ is $\T^n$ equivariantly biholomorphic to $(M, J_0)$.
\end{lemma}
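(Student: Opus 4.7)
The plan is to verify the completeness criterion of Proposition~\ref{biholomorphic}: for each $v \in \tor$, the vector field $JX_v$ must be complete. Fixing $v$, I would take a maximal integral curve $\gamma : (a, b) \to M$ of $JX_v$ and argue by contradiction that $b = +\infty$ (the case $a = -\infty$ being symmetric).

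The key input is that $J$ is $g$-orthogonal, so $\|\dot\gamma(t)\|_g = \|JX_v\|_g(\gamma(t)) = \|X_v\|_g(\gamma(t))$. The growth assumption \eqref{linear} then yields the pointwise bound
\[
\|\dot\gamma(t)\|_g \le C\, d^g(p_0, \gamma(t)) + C.
\]
Setting $f(t) := d^g(p_0, \gamma(t))$, the triangle inequality gives $f(t) \le f(0) + \int_0^t \|\dot\gamma(s)\|_g\, ds$, which combined with the previous estimate produces the integral inequality $f(t) \le f(0) + \int_0^t (C f(s) + C)\,ds$. Gronwall's lemma then shows that $f$ is uniformly bounded on any finite interval $[0, b)$.

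Assuming $b < \infty$, the image $\gamma([0,b))$ lies in a closed ball of $(M,g)$, which is compact by Hopf--Rinow since $g$ is complete; moreover $\|\dot\gamma\|_g$ is uniformly bounded on $[0,b)$, so $\gamma$ is Lipschitz and hence Cauchy as $t \to b^-$, with a limit $p_\infty \in M$. Local existence for the smooth ODE associated with $JX_v$ at $p_\infty$ then extends $\gamma$ past $b$, contradicting maximality. Therefore $b = +\infty$, $JX_v$ is complete, and Proposition~\ref{biholomorphic} delivers the $\T^n$-equivariant biholomorphism $(M, J) \cong (M, J_0)$.

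I do not anticipate a serious obstacle: the hypothesis \eqref{linear} is tuned to be exactly the largest growth rate of $\|X_v\|_g$ for which Gronwall closes up and precludes finite-time escape of the flow of $JX_v$. The remaining ingredients---the isometry property of $J$, Hopf--Rinow, and smooth ODE extension---are entirely standard, and Proposition~\ref{biholomorphic} does the categorical bookkeeping.
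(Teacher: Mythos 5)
Your proof is correct and follows essentially the same route as the paper: both arguments use the linear growth hypothesis to derive a Gronwall-type bound showing that an integral curve of $JX_v$ stays at bounded distance from $p_0$ in finite time, and then invoke completeness of $g$ to rule out finite-time escape. The paper packages the estimate as a uniform Lipschitz bound on $\log(\ell_\gamma(t)+1)$ rather than applying Gronwall to $d^g(p_0,\gamma(t))$ directly, but this is only a cosmetic difference.
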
 
\begin{proof}We claim that for each fundamental vector field $X_v$, the vector field $JX_v$ is complete, and therefore the lemma follows from an application of Proposition \ref{biholomorphic}.  To see this, first note that $||JX_v||_g = ||X_v||_g$. Let $\gamma:[0, T) \to M$ be any integral curve of $JX_v$ and denote $\gamma(0) = q$. We claim that $\gamma$ extends past time $T$.  We consider the quantity $\log\left( \ell_\gamma(t) +1 \right)$, where $\ell_\gamma(t) = \int_0^t ||JX_v||_g ds$ is the length of $\gamma$. By \eqref{linear} we have
\begin{equation*}
\begin{split}
\left| \frac{\partial }{\partial t} \log\left( \ell_\gamma(t) + 1 \right) \right| &= \frac{ ||JX_v||_g (t)}{\ell_\gamma(t) + 1} \leq C \frac{d^g(p_0, \gamma(t)) + 1}{d^g(q, \gamma(t)) + 1} \leq C',
\end{split}
\end{equation*}
for a constant $C'$ depending on C in \eqref{linear} and $q$. It follows that $\log(\ell_\gamma(t) + 1)$ is uniformly Lipschitz on $[0,T)$, and so 
\[ \left| \log(\ell_\gamma(t) + 1)  \right| \leq C'|T - 0| \leq C' T.\]
In particular, we have that 
\[d^g(p_0, \gamma(t)) \leq d^g(p_0, q) + \ell_\gamma(t) \leq C' e^{C'T},  \]
so the result follows from the completeness of $g$. 
\end{proof}

\subsection{Toric steady gradient K\"ahler-Ricci solitons}  
 By definition, a \emph{steady gradient K\"ahler-Ricci soliton} (steady GKRS) on $M$ is a K\"ahler structure $(J, g, \omega)$ together with a real $J$-holomorphic vector field $X$ such that \eqref{steadyKRS} is satisfied 
and $X=  J\nabla f= -\omega^{-1}(df)$ for a function $f \in C^\infty(M)$. The latter condition means that $f$ is a Killing potential of $X$ and \eqref{steadyKRS} becomes
\[ \rho= \frac{1}{2} d d^c f.\]
 We now consider the case when $(M, J, g, \omega)$ is a toric K\"ahler manifold, which is also a steady GKRS with respect to the vector field $X=-\omega^{-1}(df)$. By averaging $f$ over $\T$, we can assume that %~\footnote{{\color{blue} VA: I am not sure what ``without loss'' means here: we are changing the soliton vector field, and therefore the soliton itself} \textcolor{red}{CC: Let $\bar{f}$ be the average of $f$ over $\T$, and let $\overline{X} = -\omega^{-1}(d\bar{f})$. Then $\mathcal{L}_{\overline{X} - X} \omega = dd^c(\bar{f} - f) = 0$, so $\overline{X} = X + K$, where $K$ is some Killing field on $M$. The soliton equation is clearly invariant by replacing $X$ by $X$ plus a Killing field, so ``without loss'' here means up to this natural action. }} 
 $f$ is $\T$ invariant, perhaps after modifying the initial vector field by the addition of a Killing field. In this case, $f$ is necessarily the pullback by the moment map of a linear function $\langle b, x \rangle, \, b\in \tor$ (see for example \cite[Lemma 3.1]{apostolov-notes} or \cite[Section 2.3]{cifarelli-uniqueness}), so that $X=X_b$ is the fundamental vector field of $b$. In this case, we have (see ~\cite{WangZhu, Donaldson-survey} for a similar discussion in the Fano case)
\begin{lemma}\label{l:toric-GKRS} Suppose there exist constants $b \in \tor$, $c\in \tor^*$ and  a symplectic potential $u(x)$ satisfying the conditions of Proposition~\ref{compactification}, such that (in some basis of $\tor$)
\begin{equation}\label{rMA2}
	e^{\langle b, \, x \rangle} = e^{\langle c,  \nabla u\rangle}\det \left({\rm Hess}(u)\right).
\end{equation}
Then $(M, \omega_0, J_u, X_b)$ gives rise to a steady GKRS.
\end{lemma}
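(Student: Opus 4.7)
The plan is to reduce the steady GKRS equation for $X_b$ to a pluriharmonicity statement that follows directly from~\eqref{rMA2}. The first step is to identify the Killing potential of $X_b$: by Proposition~\ref{compactification}, $u$ defines a globally smooth $\T^n$-invariant K\"ahler structure $(g, J_u, \omega_0)$ on $M$, and the fundamental vector field $X_b$ is automatically $J_u$-holomorphic and Killing; an elementary computation in the $(\mu, t)$-coordinates using $\omega_0 = \sum d\mu_i \wedge dt_i$ shows that $\iota_{X_b}\omega_0 = -d\langle b, \mu\rangle$, so that $f := \langle b, x\rangle \circ \mu$ serves as a Killing potential with $X_b = J_u \nabla f$. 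Applying the standard K\"ahler identity $\mathcal{L}_{JX}\omega_0 = -dd^c f$ (valid for any Hamiltonian field $X = J\nabla f$), the steady GKRS equation~\eqref{steadyKRS} becomes the scalar equation
\[ \rho = \tfrac{1}{2}\, dd^c f. \]

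The main technical step is to express $\rho$ in terms of $u$. Restricting to the open dense set $M^0 = \mu^{-1}(\mathring{\Pol})$ and using the local $J_u$-holomorphic coordinates $z_j = e^{y_j + \sqrt{-1}\, t_j}$ with $y = \nabla u$ (described in the Remark following Corollary~\ref{surjectivegradient}), the K\"ahler potential of $(g, J_u, \omega_0)$ is the Legendre transform $H(y)$ of $u(x)$, which satisfies $\mathrm{Hess}(H)(y) = \mathrm{Hess}(u)^{-1}(x(y))$. Computing the ratio of $\omega_0^n/n!$ to the local holomorphic volume form $\Omega \wedge \bar \Omega = dz_1 \wedge \cdots \wedge dz_n \wedge d\bar z_1 \wedge \cdots \wedge d\bar z_n$ and applying the standard identity $\rho = -\tfrac{1}{2} dd^c \log\bigl( (\omega_0^n/n!)/(i^{n^2}\Omega\wedge\bar\Omega)\bigr)$, one obtains, after a short Legendre-duality calculation,
\[ \rho = \tfrac{1}{2}\, dd^c \log \det\bigl(\mathrm{Hess}(u)\bigr), \]
as an identity of $\T^n$-invariant smooth $(1,1)$-forms on $M$ (valid a priori on $M^0$ and extending to $M$ by smoothness).

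Given this formula, the conclusion is essentially a substitution. Taking the logarithm of~\eqref{rMA2} yields
\[ \log\det(\mathrm{Hess}(u)) = \langle b, x\rangle - \langle c, \nabla u\rangle, \]
whence
\[ \rho = \tfrac{1}{2}\, dd^c f \; - \; \tfrac{1}{2}\, dd^c \langle c, \nabla u\rangle. \]
It remains only to observe that the second term vanishes: in the holomorphic coordinates above, $\langle c, \nabla u\rangle = \langle c, y\rangle = \sum_j c_j \log|z_j|$, and $\log|z_j|^2 = \log z_j + \log \bar z_j$ is pluriharmonic, so $dd^c \langle c, \nabla u\rangle = 0$ on $M^0$, hence on all of $M$ by density. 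This establishes $\rho = \tfrac{1}{2}\, dd^c f$ and concludes the argument.

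The main obstacle is the second step, namely the derivation of the Ricci-form identity $\rho = \tfrac{1}{2}\, dd^c \log\det(\mathrm{Hess}(u))$. This is a well-known formula in toric K\"ahler geometry, but careful bookkeeping of the Legendre transform between $u(x)$ and $H(y)$ and of the $dd^c/\sqrt{-1}\partial\bar\partial$ sign conventions is required to pin down the correct sign and normalization. Once that identity is in place, the remainder of the proof is a direct substitution, and the pluriharmonicity of $\log|z_j|$ is precisely what encodes the ``linear-in-$\nabla u$'' freedom in~\eqref{rMA2}.
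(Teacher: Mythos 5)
Your proposal is correct and follows essentially the same route as the paper: the paper's proof also passes to the Legendre-dual holomorphic coordinates $y=\nabla u$, rewrites \eqref{rMA2} as the complex Monge--Amp\`ere identity \eqref{e:rMA1} for the K\"ahler potential $H$, and takes $dd^c\log$ of both sides, using exactly the two ingredients you isolate --- the identity $\mathrm{Hess}(H)=\mathrm{Hess}(u)^{-1}$ (equivalently $\rho=\tfrac12 dd^c\log\det\mathrm{Hess}(u)$) and the pluriharmonicity of $y$ --- before extending from $M^0$ to $M$ by continuity. The only difference is organizational (you substitute into the symplectic-coordinate Ricci formula rather than differentiating the potential-level equation), so no further comment is needed.
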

\begin{proof} We use a basis of $\tor$ to identify $\tor \cong \R^n$ and $\tor^* \cong (\R^n)^*$.
 Write $y:=\nabla u = \sum_{j=1}^n\left(\frac{\partial u} {\partial x_j}\right) e_i$ in this basis.  Applying Proposition~\ref{legendretransform}, we see by \eqref{rMA2} that there is a K\"ahler potential $H(y)$ on $M^0$ such that
\begin{equation}\label{e:rMA1}
   e^{ -J_u X_b \cdot H} \det {\rm Hess}(H) = e^{\langle c, \, y \rangle}. 
\end{equation}
Using that  $y=\nabla u$ are pluriclosed coordinates, 
 \[{\rm Hess}(H)_{ij} =\left({\rm Hess}(u)\right)^{-1}_{ij}= g(X_{e_i}, X_{e_j})\]
 (by the properties of Legendre transform),  and  taking $dd^c\log$ of both sides of \eqref{e:rMA1} shows that $(J_0, \omega)$ defines a steady GKRS on $M^0$ with respect to $X_b$, and therefore the result follows by continuity. 
\end{proof}

\begin{dfn} A steady GKRS given by the construction of Lemma~\ref{l:toric-GKRS} will be referred to as a \emph{toric} steady GKRS.
\end{dfn}

\begin{lemma}\label{l:rMA1} Let $(M, \omega_0)$ be the symplectic toric manifold determined by a Delzant polyhedron $(\Pol, \Lambda)$. Then, 
there exists at most one constant $c \in \tor^*$ for which \eqref{rMA2} can admit a solution $(u, b)$ with $b \in \tor$ and $u(x)$ satisfying the conditions of Propositions \ref{compactification}. If it exists, the constant $c$ is determined by the condition
\begin{equation}\label{CY}
 \langle dL_j, c \rangle =2, \end{equation}
for any label $L_j(x)$ of $(\Pol, \Lambda)$.
\end{lemma}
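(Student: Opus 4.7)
Plan. The strategy is to take the logarithm of \eqref{rMA2} and compare the boundary asymptotics of both sides at each facet of $\Pol$. Writing
\[
\langle b, x\rangle = \langle c, \nabla u\rangle + \log\det({\rm Hess}(u)),
\]
the left-hand side is a linear function on $\tor^*$, hence smooth up to $\partial\Pol$, so any logarithmic singularity of the right-hand side at a facet must vanish, and this will force \eqref{CY}.

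First I would record the singular behavior of the Guillemin potential $u_0=\tfrac{1}{2}\sum_{j=1}^d L_j\log L_j$ near a point in the interior of a facet $\{L_k=0\}$. A direct differentiation gives
\[
\nabla u_0 \;=\; \tfrac{1}{2}\sum_{j=1}^d (\log L_j+1)\,dL_j \;=\; \tfrac{1}{2}(\log L_k)\,dL_k + (\text{smooth up to the facet}),
\]
so $\langle c,\nabla u_0\rangle=\tfrac{1}{2}\langle c,dL_k\rangle\log L_k+\text{smooth}$. For the determinant it is most transparent to work in affine coordinates near a vertex $v$ at which the $n$ labels $L_1,\ldots,L_n$ vanish: by the Delzant condition these form a coordinate system, in which ${\rm Hess}(u_0)$ is the sum of the singular diagonal $\mathrm{diag}\!\left(\tfrac{1}{2L_j}\right)$ and a smooth symmetric matrix, giving
\[
\det {\rm Hess}(u_0) \;=\; \frac{\varphi(x)}{L_1\cdots L_n}
\]
with $\varphi$ smooth and strictly positive up to $v$; restricted to a generic point of a single facet $\{L_k=0\}$ this degenerates to $\log\det{\rm Hess}(u_0)=-\log L_k+\text{smooth}$. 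Using the two conditions of Proposition~\ref{compactification}, namely $u-u_0\in C^\infty(\Pol)$ and ${\rm Hess}(u)\cdot{\rm Hess}(u_0)^{-1}$ smooth and non-degenerate up to $\Pol$, these asymptotics transfer verbatim from $u_0$ to $u$:
\[
\langle c,\nabla u\rangle = \tfrac{1}{2}\langle c,dL_k\rangle\log L_k+\text{smooth}, \qquad \log\det {\rm Hess}(u) = -\log L_k+\text{smooth}
\]
near the interior of $\{L_k=0\}$.

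Substituting into the logarithm of \eqref{rMA2} and matching the coefficient of $\log L_k$, which must vanish since $\langle b,x\rangle$ is smooth on $\Pol$, yields $\tfrac{1}{2}\langle c,dL_k\rangle-1=0$, i.e.\ $\langle c,dL_k\rangle=2$, for every label $L_k$. Uniqueness is then immediate: at any vertex $v$ the Delzant condition supplies $n$ labels whose differentials form a basis of $\Lam$, and hence of $\tor$, so the $n$ linear equations $\langle c,dL_j\rangle=2$ (indexed by the labels at $v$) have at most one solution $c\in\tor^*$.

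The only delicate point is the transfer of the boundary expansion from $u_0$ to $u$, which relies essentially on the second hypothesis of Proposition~\ref{compactification}: it ensures that $\det{\rm Hess}(u)/\det{\rm Hess}(u_0)$ is a smooth, nowhere-vanishing function on $\Pol$, so that $\log\det{\rm Hess}(u)$ and $\log\det{\rm Hess}(u_0)$ have identical logarithmic divergences at each facet. Everything else is a direct boundary expansion plus linear algebra at a vertex.
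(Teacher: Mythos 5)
Your proof is correct and follows essentially the same route as the paper's: take the logarithm of \eqref{rMA2}, observe the right-hand side is smooth on $\Pol$, and match the $\log L_j$ coefficients of the left-hand side (which the paper states without the explicit facet-by-facet expansion you carry out), then use the Delzant vertex condition for uniqueness. The only difference is that you spell out the boundary asymptotics of $\nabla u_0$ and $\det{\rm Hess}(u_0)$ and their transfer to $u$ via Proposition~\ref{compactification}, which the paper leaves as ``not hard to see.''
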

\begin{proof}  Taking the logarithm of \eqref{rMA2}, we get 
\[ \log \det {\rm Hess}(u) +  \sum_{i = 1}^n c_i \frac{\partial u}{\partial x_i} = \langle b, \, x \rangle. \]
Clearly the right hand side is smooth on $\Pol$. Moreover it is not hard to see using Proposition \ref{compactification} that, modulo $C^\infty(\Pol)$, the left hand side is given by
\[-\sum_{j = 1}^d \left(1 - \frac{1}{2}\langle dL_j, c \rangle \right) \log L_j(x),\] 
so that  we must have $\langle dL_j, c \rangle = 2$ for any label $L_j$.  As $\Pol$ is a Delzant polyhedron with at least one vertex,  there are at least $n = \dim_\R \tor$ linearly independent $dL_i$, so we see that $c$ is uniquely determined, should it exist. \end{proof}

\begin{rem} The proof of Lemma~\ref{l:rMA1} shows that \eqref{CY} is a necessary and sufficient condition for the existence of a globally defined Ricci potential  with respect to any K\"ahler metric $(\omega_0, J_u)$ associated to a symplectic potential  $u$ satisfying the conditions of Propositions~\ref{compactification}. In this case, the corresponding Ricci potential is the smooth extension of the function
 \[ \log \det {\rm Hess}(u) +  \sum_{i = 1}^n c_i \frac{\partial u}{\partial x_i}.  \]
The condition \eqref{CY} also implies  the existence of a global holomorphic volume form on $(M, J_0)$ (see e.g. \cite[Prop.~6.8]{FOW}).  Indeed,  in the basis of $\Lambda$ associated  to a vertex $v$ of $\Pol$,  \eqref{CY} yields that $c=(2, \ldots, 2)$.   In the holomorphic coordinates $(z_1, \dots, z_n)$ on $M^0\cong (\C^{\times})^n$ furnished by Proposition~\ref{biholomorphic},  we consider the holomorphic function $z^{\frac{c}{2}}:= z_1\dots z_n$. By \eqref{CY}, in the equivariant holomorphic chart $(z_1^w, \ldots, z_n^w)$ of $M$ associated to any other vertex $w$ of $\Pol$, we have $z^{\frac{c}{2}}= C_w z_1^{w}\cdots z_n^{w}$, showing that $z^{\frac{c}{2}}$ extends to a global holomorphic function on $(M, J_0)$ vanishing at order one along  the  toric ``divisor''  corresponding to the pre-images of each facet $F_j$ of $\Pol$. The product of this with  the meromorphic volume form $\left(X_1^{1,0} \wedge \cdots \wedge X^{1,0}_n\right)^{-1}$ of $(M, J_0)$ (where $X_1, \ldots, X_n$ are the fundamental vector fields on $M$ corresponding to any  lattice basis of $\tor$) is therefore a non-vanishing section of the canonical bundle $K_{M}$.
\end{rem}
By \cite[Proposition 2.15]{cifarelli-uniqueness}, any K\"ahler metric $\omega$ constructed via \eqref{toric} and satisfying the conditions of Proposition \ref{compactification} lies in the same cohomology class as the reference metric $\omega_0$. Thus combining the above, we obtain
\begin{prop}\label{rMAprop} Suppose $(M, J_0, \omega_0)$ is a toric manifold corresponding to a Delzant polyhedron $(\Pol, \Lambda)$ which satisfies \eqref{CY} for a unique point $c\in \tor^*$. Then any solution  $(u, b)$ of \eqref{rMA2} where $b\in \tor$ and $u(x)$ satisfies the conditions of Propositions~\ref{compactification} and \ref{biholomorphic} gives rise to a  toric steady GKRS  $(\omega, X_b)$ on $(M, J_0)$,  in the deRham  cohomology class of $[\omega_0]$.
\end{prop}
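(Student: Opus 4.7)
The plan is to first apply Lemma~\ref{l:toric-GKRS} to produce a toric steady GKRS on the symplectic manifold $(M,\omega_0)$ equipped with the complex structure $J_u$, then to transport that structure to $(M,J_0)$ via the equivariant biholomorphism supplied by Proposition~\ref{biholomorphic}, and finally to read off the cohomology identification from the statement cited just above the proposition.

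The first two steps I would carry out as follows. Since $u$ satisfies the hypotheses of Proposition~\ref{compactification}, formula~\eqref{toric} produces a smooth $\T^n$-invariant $\omega_0$-compatible K\"ahler structure $(g_u,J_u)$ on $M$. The assumption~\eqref{rMA2} is precisely that of Lemma~\ref{l:toric-GKRS}, so $(M,\omega_0,J_u,X_b)$ is a toric steady GKRS, with Killing potential $f_u=\langle b,\mu_{\omega_0}\rangle$. Since $u$ also meets the conditions of Proposition~\ref{biholomorphic}, there is a $\T^n$-equivariant biholomorphism $\beta:(M,J_0)\to(M,J_u)$, and I would set $\omega:=\beta^*\omega_0$. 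Equivariance of $\beta$ identifies the fundamental vector field $X_b$ on $(M,J_u)$ with the fundamental vector field $X_b$ on $(M,J_0)$, while the tensorial equation \eqref{steadyKRS} and the gradient relation $X_b=-\omega_0^{-1}(df_u)$ are preserved by pull-back. Hence $(M,J_0,\omega,X_b)$ is a steady GKRS with Killing potential $f=\beta^*f_u=\langle b,\mu_\omega\rangle$.

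It remains to verify that $[\omega]=[\omega_0]$ in $H^2_{dR}(M)$. The form $\omega$ is a $\T^n$-invariant K\"ahler form on the complex manifold $(M,J_0)$, and because $\beta$ is equivariant its moment map $\mu_\omega=\mu_{\omega_0}\circ\beta$ has image the same Delzant polyhedron $\Pol$. Consequently $(M,J_0,\omega)$ is itself a toric K\"ahler structure of the type reviewed in Section~\ref{s:toric}, governed by some symplectic potential $\tilde u$ on $\mathring{\Pol}$ satisfying Proposition~\ref{compactification}. The statement recalled just before the proposition, namely \cite[Proposition~2.15]{cifarelli-uniqueness}, then yields $[\omega]=[\omega_0]$. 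I expect the only genuinely non-routine point to be this last identification: one must reverse the roles of symplectic and complex data to recast $\omega$ on $(M,J_0)$ in the canonical form~\eqref{toric}, invoking the non-compact Abreu--Guillemin correspondence of \cite{Abreu-Sena-Dias,cifarelli-uniqueness,Sena-Dias}, after which the cited cohomology equality closes the argument.
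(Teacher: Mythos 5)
Your proposal is correct and follows essentially the same route as the paper: the paper obtains the proposition by directly combining Lemma~\ref{l:toric-GKRS}, Propositions~\ref{compactification} and \ref{biholomorphic}, and the cited \cite[Proposition~2.15]{cifarelli-uniqueness} for the cohomology class, which is exactly the chain of steps you spell out (with the transport by $\beta$ and the identification $[\beta^*\omega_0]=[\omega_0]$ made explicit). No gaps.
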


\subsection{The soliton vector field}
As before, let $\Pol$ be a polyhedron with labels $L_1, \dots, L_d$.  Denote by $\bar{L}_j$ the linear part of $L_j$, so that $\bar{L}_j(0) = 0$. We define the \emph{recession cone} $C(\Pol)$ to be the cone  
\begin{equation}\label{recession-defining-eqns}
	C(\Pol) = \{ x \in \tor^* \: | \: \bar{L}_j(x) \geq 0\}, 
\end{equation}
and its \emph{dual}
\begin{equation}\label{dual-defining-eqns}
	C^*(\Pol) = \{ y \in \tor  \: | \: \langle y ,\, x \rangle \geq 0 \textnormal{ for all } x \in C(\Pol) \}.
\end{equation}

\begin{lemma}[Forbidden region lemma]\label{frlemma}
Suppose that $(M, J_0, \omega)$ is a steady GKRS given by Proposition~\ref{rMAprop}, so that the soliton vector field $X = J_0\gr_g f$ is the fundamental vector field associated to $b \in \tor$. Then 
\[ \int_M e^{f} \omega^n = \infty,  \] 
and consequently $b \not\in -C^*(\Pol)$. 
\end{lemma}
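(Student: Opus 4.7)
The plan is to evaluate $\int_M e^f\,\omega^n$ explicitly by passing to action-angle coordinates and then using the soliton Monge--Amp\`ere equation~\eqref{rMA2} combined with the Legendre transform. By the discussion preceding Lemma~\ref{l:toric-GKRS}, after averaging over $\T^n$ we may take $f$ to be $\T^n$-invariant, so it is the pull-back by the momentum map of an affine-linear function; absorbing the additive constant (which only rescales the integral by a positive factor and so does not affect finiteness), I write $f=\mu^*\langle b,x\rangle$. In the action-angle coordinates of~\eqref{canonical}, $\omega^n/n!$ is the standard Lebesgue form on $\mathring{\Pol}\times\T^n$, so integrating out the torus fibres of $\mu$ yields
\[
  \int_M e^f\,\omega^n \;=\; n!\,(2\pi)^n \int_{\Pol} e^{\langle b,x\rangle}\,dx.
\]

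Next I would substitute the soliton identity $e^{\langle b,x\rangle}=e^{\langle c,\nabla u(x)\rangle}\det(\mathrm{Hess}(u))$ coming from~\eqref{rMA2} into this integral, and change variables by $y=\nabla u(x)$. Corollary~\ref{surjectivegradient} guarantees that this map is a diffeomorphism $\mathring{\Pol}\to\R^n$ whose Jacobian is exactly $\det(\mathrm{Hess}(u))$, so
\[
  \int_{\Pol} e^{\langle b,x\rangle}\,dx \;=\; \int_{\R^n} e^{\langle c,y\rangle}\,dy \;=\; +\infty,
\]
where the last equality holds whether $c=0$ (the integrand is $1$ on an unbounded domain) or $c\neq 0$ (direct application of Fubini in a direction where a coordinate of $c$ is non-zero). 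This establishes the identity $\int_M e^f\,\omega^n=+\infty$.

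For the consequence I would argue contrapositively. If $b$ were in (the interior of) $-C^*(\Pol)$, then $\langle b,\cdot\rangle$ would be strictly negative on $C(\Pol)\setminus\{0\}$, and compactness of the unit sphere intersected with $C(\Pol)$ would yield a linear decay $\langle b,x\rangle\leq -\varepsilon|x|+C$ along recession rays in $\Pol$. Combined with the standard fact that $\Pol$ is contained in a translate of its recession cone plus a bounded set, this would force $\int_\Pol e^{\langle b,x\rangle}\,dx<\infty$, contradicting the identity just proved and hence ruling out $b\in -C^*(\Pol)$. The Monge--Amp\`ere-plus-Legendre-transform computation is the heart of the argument and is purely formal; the only mildly delicate point is this final elementary convex-geometric estimate.
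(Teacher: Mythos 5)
Your proof is correct and takes essentially the same route as the paper: both identify $\int_M e^{f}\,\omega^n$ (up to the constant $n!\,(2\pi)^n$) with $\int_{\Pol} e^{\langle b,\,x\rangle}\,dx$ on one side and with $\int_{\R^n} e^{\langle c,\,y\rangle}\,dy=+\infty$ on the other, using the soliton equation \eqref{rMA2} together with the Legendre change of variables $y=\nabla u$, and then deduce the forbidden-region statement from the divergence of the integral over $\Pol$. Your parenthetical ``the interior of'' in the last step is an honest caveat---divergence of $\int_{\Pol}e^{\langle b,\,x\rangle}dx$ only excludes $b$ from the interior of $-C^*(\Pol)$, and the paper's ``if and only if'' has the same boundary imprecision---so this does not distinguish your argument from the paper's.
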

\begin{proof}
By \eqref{e:rMA1}, we have that 
\[ \int_M e^{f} \omega^n =(2\pi)^n \int_{\R^n} e^{\langle c, \, y \rangle} dy = + \infty.\] 
By \eqref{rMA2}, we have that the right hand side above is equal to a multiple of
\[ \int_\Pol e^{\langle b, \, x \rangle} dx. \]
It's clear that $e^{\langle b, \, x \rangle}$ is integrable on $\Pol$ if and only if $b \in -C^*(\Pol)$.
\end{proof}
\begin{rem}
In Lemma~\ref{frlemma},  we do not assume anything a priori about the curvature of $(J_0, \omega)$.  In general, without the toric assumption, one can show that a steady GKRS has infinite weighted volume under assumptions on the scalar curvature decay at infinity or on the positivity of Ricci curvature (see for example \cite{bryant, ConDer}). \end{rem}

\section{Cao's steady GKRS  on $\mathcal{O}(-n)$ and $\C^n$}\label{s:calabi-type} 
\subsection{Complete steady GKRS's  on $\cO(-n)$} We recall here the construction of complete steady GKRS on \[M= \cO(-n) \to \PP^{n-1}, \] discovered by Cao~\cite{Cao};  it is also observed  (cf.~\cite{FW, Schafer1}) that the construction can be extended to suitable powers of the canonical bundle of a Fano K\"ahler--Einstein manifold. We use here the Calabi ansatz from the point of view of Hwang--Singer~\cite{HS},  but following the notation of \cite{ACG} as a warm-up for the more general construction we present in the next sections.

\smallskip 
The K\"ahler metric is taken of the following form, known  as the {\it Calabi ansatz}:
\begin{equation}\label{metric}
\begin{split}
g &= \xi \check{g}_{\rm FS} + \frac{d\xi^2}{\Theta(\xi)} + \Theta(\xi) \theta^2 \\
\omega &= \xi  \check{\omega}_{\rm FS} + d\xi \wedge \theta,
\end{split}
\end{equation}
where:   
\begin{enumerate}
\item[$\bullet$] $\check{\omega}_{\rm FS}$ is a Fubini--Study metric on $\PP^{n-1}$ of scalar curvature \newline $\check{s}_{\rm FS}= \frac{2n(n-1)}{r}$,
\item[$\bullet$] $\theta$ is a connection 1-from with $d\theta = \check{\omega}_{\rm FS}$,
\item[$\bullet$] $\Theta(\xi)$ is a positive smooth function (called a {\it profile function} in \cite{HS}), defined on the interval $(\alpha, \infty)$ for some $\alpha>0$, and satisfying
\begin{equation}\label{calabi-boundary}
\Theta(\alpha)=0, \ \ \Theta'(\alpha)= 2.
\end{equation}
\end{enumerate}
The induced complex structure $J=g^{-1}\omega$ is invariant under the $\Sph^1$ action on the fiber, acts as the pullback of the complex structure on $\PP^{n-1}$ on the horizontal distribution, and satisfies 
\begin{equation}\label{J1}
J d\xi  = \Theta(\xi) \theta.
\end{equation}

For any such profile function $\Theta(\xi)$, the corresponding metric is well-defined on the total space $M^{0} \cong \cO(-r)^{\times}$ of the principal $\mathbb C^*$-bundle of degree $r$  over\footnote{Our normalization for the scalar curvature of $\check{\omega}_{\rm FS}$  yields that $[\check{\omega}_{FS}] = r \gamma$ where $\gamma$ is the generator of $H^2(\PP^{n-1}, \mathbb Z) \cong \mathbb Z$.} $\PP^{n-1}$ whereas the boundary conditions \eqref{calabi-boundary} imply that  the metric smoothly extends to the total space $M= \cO(-r)$ (see e.g.  \cite{HS,ACGT}). 

The completeness of the metric is studied in \cite{HS}, and it is shown that it is equivalent to the growth of $\Theta(\xi)$ at $\infty$ being at most quadratic in $\xi$.

In general, $J$ is not biholomorphic to the standard complex structure of $M$ (as this might not be true fibre-wise). For this to be true, one needs to ensure (see Proposition~\ref{biholomorphic}) that the vector field $JX$ is complete, where $X$ is the generator of the $\Sph^1$-action on the $\R^2$-fibre of $M$; the latter can be checked to hold true via Lemma~\ref{criterion} if $\Theta(\xi)$ grows at $\infty$ at most linearly in $\xi$.

A Ricci potential  $\kappa$ for the metric \eqref{metric}, written on $M^{0}$, is given  by the 
formula \cite{ACG,HS}:
\begin{equation}\label{scal}
\kappa = \check{\kappa}_{\rm FS} -\frac{1}{2} \log\left(\xi^{n-1} \Theta(\xi)\right),
\end{equation}
where, $\check{\kappa}_{\rm FS}$ is a local Ricci potential of $\check{\omega}_{\rm FS}$.
Furthermore, the function
\begin{equation}\label{affine-linear}
a\xi
\end{equation}
is a Killing potential for $(g, J, \omega)$, and  the function
\begin{equation}\label{relative-PH}
 y (\xi) := \int^{\xi} \frac{d\xi}{\Theta(\xi)} 
 \end{equation}
satisfies (see \eqref{J1})
\[ dd^c y= d\theta = \check\omega_{\rm FS}. \]
 We thus obtain the following ansatz for constructing steady GKRS:
 \begin{lemma}\label{KRS-Calabi} If $\Theta(\xi)$ satisfies
 \[\frac{n}{r} \int^{\xi} \frac{d\xi}{\Theta(\xi)}  -\frac{1}{2} \log\left(\xi^{n-1} \Theta(\xi) \right)= a\xi, \]
 for some real constants $a$,  then \eqref{metric} describes a steady GKRS  with corresponding Killing potential $2a\xi$. %~\footnote{{\color{blue} VA: Un fortunately, in the rest of the paper we use a different convention for $X$: we  are solving $\rho=dd^c f$ instead of $\rho = \frac{1}{2}dd^c f$}.}
 \end{lemma}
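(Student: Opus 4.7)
The plan is to unravel the steady GKRS condition \eqref{steadyKRS} into the statement that a Ricci potential $\kappa$ coincides with $\tfrac{1}{2}$ times the Killing potential, up to a pluriharmonic (hence constant, by symmetry) function. The hypothesis is then just this identity.

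First I would recall that for a Kähler structure, the vector field $X=J\nabla f=-\omega^{-1}(df)$ associated to a Killing potential $f$ satisfies $\iota_{\nabla f}\omega = d^c f$, hence $\mathcal L_{JX}\omega = -\mathcal L_{\nabla f}\omega = -dd^c f$. Therefore \eqref{steadyKRS} is equivalent to $\rho = \tfrac{1}{2}dd^c f$; equivalently, if $\kappa$ is a (local) Ricci potential so that $\rho = dd^c\kappa$, then $(g,J,\omega,X)$ is a steady GKRS with Killing potential $f$ precisely when $\kappa - \tfrac{1}{2}f$ is pluriharmonic. On $M^\circ$, which is the total space of a principal $\C^\times$-bundle over $\PP^{n-1}$, the ${\rm S}^1\times {\rm PSU}(n)$-invariant pluriharmonic functions are constants, so the condition reduces to $\kappa = \tfrac{1}{2}f + \mathrm{const}$.

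Next I would compute $\kappa$ explicitly using \eqref{scal} and identify $\check{\kappa}_{\rm FS}$ with a multiple of the relative Kähler potential $y(\xi)$. Since $\check{\omega}_{\rm FS}$ is Kähler--Einstein on $\PP^{n-1}$ with scalar curvature $\check{s}_{\rm FS} = 2n(n-1)/r$, its Ricci form satisfies
\[
\check{\rho}_{\rm FS} = \frac{\check{s}_{\rm FS}}{2(n-1)}\check{\omega}_{\rm FS} = \frac{n}{r}\check{\omega}_{\rm FS}.
\]
Combining this with $dd^c y = \check{\omega}_{\rm FS}$ from \eqref{relative-PH}, we get $dd^c\check{\kappa}_{\rm FS} = \tfrac{n}{r}dd^c y$, so $\check{\kappa}_{\rm FS} = \tfrac{n}{r}y$ modulo a pluriharmonic (and thus constant, by the ${\rm PSU}(n)$-symmetry of the Fubini--Study structure) term. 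Inserting this into \eqref{scal} gives
\[
\kappa = \frac{n}{r}\int^{\xi}\frac{d\xi}{\Theta(\xi)} - \frac{1}{2}\log\bigl(\xi^{n-1}\Theta(\xi)\bigr) \pmod{\mathrm{const}}.
\]

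Finally, the affine function $a\xi$ is a Killing potential by \eqref{affine-linear}, so taking $f := 2a\xi$ as the candidate Killing potential, the steady GKRS equation reduces exactly to the displayed ODE in the statement. Absorbing the arbitrary constant into the indefinite integral, the hypothesis $\tfrac{n}{r}\int^\xi d\xi/\Theta - \tfrac{1}{2}\log(\xi^{n-1}\Theta)=a\xi$ produces $\kappa = a\xi = \tfrac{1}{2}f$ as required. No step is really an obstacle here; the only point requiring care is justifying that the pluriharmonic ambiguity in $\kappa$, $y$, and $\check{\kappa}_{\rm FS}$ can be consistently absorbed into the constant of integration in \eqref{relative-PH}, which follows from the symmetries of the Calabi ansatz.
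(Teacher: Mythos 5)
Your proposal is correct and follows essentially the same route as the paper, which presents the lemma as an immediate consequence of the three facts collected just before it: the Ricci potential formula \eqref{scal}, the Killing potential \eqref{affine-linear}, and the relation $dd^c y=\check\omega_{\rm FS}$ from \eqref{relative-PH}, combined with $\check\rho_{\rm FS}=\tfrac{n}{r}\check\omega_{\rm FS}$ so that $dd^c\check\kappa_{\rm FS}=\tfrac{n}{r}\,dd^c y$. Your extra care about absorbing the pluriharmonic ambiguities is harmless but not strictly needed for this direction of the implication, since one only has to compare $dd^c$ of the two sides.
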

Taking one derivative in the relation above, we get a description of steady GKRS in terms of solutions of a linear ODE~\cite{FIK, FW}:
\begin{equation}\label{ODE-KRS}
\Theta'(\xi) + \left(2a+\frac{(n-1)}{\xi}\right) \Theta(\xi) -\frac{2n}{r}=0.
\end{equation}
The general solution of the ODE 
\[ \psi'(\xi) + p(\xi) \psi(\xi) -q(\xi)=0 \] is given by
\begin{equation}\label{general-solution}
\psi(\xi)= e^{-\int^{\xi} p(x) dx}\left(\int^{\xi} q(x) e^{\int^x p(\tau)d\tau} dx + C\right).
\end{equation}
Specializing to \eqref{ODE-KRS} with $a\neq 0$,  we get the following explicit form of the unique solution $\Theta_{\alpha, a}(\xi)$ of \eqref{ODE-KRS} satisfying $\Theta_{\alpha,a}(\alpha)=0$:
\begin{equation}\label{Futaki-soliton}
\begin{split}
\Theta_{\alpha,a}(\xi) &=  \left(\frac{2n}{r}\right) e^{-2a\xi}\xi^{-n+1}\left(\int_{\alpha}^{\xi} e^{2ax}x^{n-1} dx\right) \\
                &=  \left(\frac{2n}{r}\right) \left[\frac{(2a\xi)^{-n+1}}{a} \left(\sum_{k=0}^{n-1} (-1)^{n-k-1} \frac{(n-1)!}{k!}(2a\xi)^k\right) \right. \\  
                 & \hspace{2.4in} \left. \phantom{\sum_{1}^2} - C_{\alpha}e^{-2a\xi}(2a\xi)^{-n+1}\right], \\
             C_{\alpha} &:=   \frac{e^{2a\alpha}}{a}\left(\sum_{k=0}^{n-1} (-1)^{n-k-1} \frac{(n-1)!}{k!}(2a\alpha)^k\right).     \end{split}
\end{equation}
The second boundary conditions $\Theta'_{\alpha, a}(\alpha)=2$ forces $n=r$, i.e. a global smooth solution can only exist on $\cO(-n)=\cK_{\PP^{n-1}}$. We conclude from the above formula that $\Theta_{\alpha,a}(\xi)$ is positive on $(\alpha, \infty)$ and, as long as $a>0$, grows at a rate $O(1)$ as $\xi \to \infty$. Therefore the metric is complete and the induced complex structure is $\C^*$-equivariantly biholomorphic to $\cO(-n)$.  We thus get a \emph{complete} steady GKRS on $\cO(-n)$ for any $a>0$ and $\alpha>0$.  The first line of the formula also makes sense for $a=0$, in which case the solution is
\[\Theta_{\alpha, 0}(x)=  \frac{2}{\xi^{n-1}}\left(\frac{\xi^{n}}{n} - \frac{\alpha^{n}}{n}\right).\]
It has a linear growth in $\xi$ at infinity, and corresponds to the multi-dimensional Eguchi--Hanson ALE Ricci-flat  metric on $\cO(-n)$~\cite{EH,LeBrun}.

\begin{rem}\label{r:cigar} The above ansatz also applies to the case $n=1$, when $\PP^{n-1}$ is a point and $\cO(-1)=\C$. In this case, the solution of \eqref{ODE-KRS} is 
\[\Theta_{\alpha, a}(\xi) = \frac{1}{a}\left( 1 -e^{-2a(\xi-\alpha)}\right).\]
The parameter $\alpha$ here is  coming from a translation of the momenta $\xi$ (and can be set $\alpha=0$). When $a>0$, this corresponds to the cigar soliton~\cite{Hamilton}.
\end{rem}

\begin{rem}\label{r:homothety} We got a two parameter family of   GKRS's on $\cO(-n)$, parametrized by the positive constants $\alpha>0$ and $a>0$. There is an equivalence relation 
\[  \frac{1}{\lambda}\Theta_{\alpha, a} (\lambda \xi) = \Theta_{\lambda \alpha, \frac{a}{\lambda}}(\xi), \qquad \lambda >0, \]
on the set of solutions. Geometrically, this corresponds to  the $\R_+$-action by a multiple of $\lambda$ on the fiber $\cO(-n)$,  and a homothety by a factor of $\frac{1}{\lambda}$ on the metric.  Taking quotient by this equivalence, we get a $1$-parameter family of steady GKRS's,  realizing \emph{any} positive multiple $a$ of the generator $-JX$ of the $\R_+$-action on $\cO(-n)$. For all these solutions, the profile function  has the asymptotic  at infinity
\begin{equation}\label{a-asymptotic}
\Theta_{\alpha, a}(\xi) = \frac{1}{a} + O(\xi^{-1}). \end{equation}
\end{rem}

\subsection{Complete steady GKRS's on $\C^{n}$}\label{s:Cao} The construction in the previous section also produces complete irreducible GKRS's on $\C^{n}$ for $n>1$, originally due to  Cao \cite{Cao}. We recast here these examples  by using the so-called \emph{blow-down} Calabi ansatz, see Koiso--Sakane~\cite{KS} in the compact case. We use the formalism of \cite{ACGT}, see in particular Proposition 2 there. 

Let $r=1$, i.e. $M=\cO(-1) \to \PP^{n-1}$,  and put  $\alpha=0$ in \eqref{calabi-boundary}.  It was observed in \cite{KS} that \eqref{metric} still define smooth tensors on $\cO(-1) \to \PP^{n-1}$. These tensors  give rise to a K\"ahler structure on $\cO(-1)\setminus S_0$ (as they  degenerate along the zero section $S_0$ of $\cO(-1)$), but they do describe the pull-back to $\cO(-1)$ of a smooth K\"ahler metric defined on the \emph{blow-down} $\C^{n}$  of $\cO(-1) \to \PP^{n-1}$ along $S_0$. In this case, the only solutions of \eqref{ODE-KRS} (with $r=1$) which satisfy \eqref{calabi-boundary}  are
\begin{equation}\label{Cao-soliton}
\Theta_a(\xi)= 2e^{-2a\xi}\xi^{-n+1}\left(\int_{0}^{\xi}e^{2ax} x^{n-1} dx\right).
\end{equation}
We again observe that if $a>0$ the asymptotic at infinity of $\Theta_a(x)$ is
\[ \Theta_a(x) = \frac{1}{a} + O(\xi^{-1}), \]
thus showing the metric is complete and the induced complex structure is biholomorphic~\footnote{A detailed argument in a greater generality is provided in Lemma~\ref{l:biholomorphic}} to $\C^n$. When $a=0$, we get the solution $\Theta_0(\xi)=2\xi$, which corresponds to the flat metric on $\C^{n}$ (see e.g. \cite{ACG}).

\begin{rem}\label{r:Cao-soliton-vector} There is an orbifold holomorphic map
$$B : {\mathcal O}(-r) \to {\mathbb C}^{n}/{\Gamma}_{r},$$
where $\Gamma_{r} \subset {\rm U}(n)$ is the cyclic group of order $r$, generated by  $\varepsilon_{r} {\rm Id}$ for a primitive $r$-th root of unity $\varepsilon_{r}$,  
defined as follows.  Let $[x_1,  \ldots,  x_n]$ be a point on $\PP^{n-1}$ written in  homogeneous coordinates and denote by $(x_1, \ldots, x_n)^{\otimes r}$ the corresponding generator of the fibre of ${\mathcal O}(-r) \to \PP^{n-1}$. Then,  for $x=([x_1, \ldots, x_n]; \lambda (x_1, \ldots, x_n)^{\otimes r}) \in \cO(-r)$,  
$$B(x):= \lambda^{\frac{1}{r}} (x_1, \ldots , x_n).$$
The map $B$ provides a biholomorphism between $M^{0}:= \mathcal{O}(-r)\setminus {S}_0$ (where $S_0$ stands for the zero section) and $({\mathbb C}^{n}\setminus \{0\}) /{\Gamma}_{r}$, with inverse 
$$B^{-1} (z_0, \ldots z_n)= ([z_1, \ldots,  z_n]; (z_1, \ldots , z_n)^{\otimes r}).$$
Note that $B$ contracts the zero section ${S}_0$ of $\cO(-r)$ to the origin of ${\mathbb C}^{n}/{\Gamma}_{r}$, so it can be thought as an orbifold blow-down map.

Specializing the above picture to $r=1$, we see that  Cao's soliton vector field  is given by  $2a$-times the generator of the $\R_+$-action on $\cO(-1)\setminus S_0$, which corresponds to the vector field $ 2a\left(\sum_{i=1}^n r_i \frac{\partial}{\partial r_i}\right)$ on $\C^{n}$.  
Up to the homothety action on $\cO(-1)\setminus S_0$ (which corresponds to the homothety action on $\C^n$), and rescaling of the metric, we can assume that  $2a=1$.

One can also produce steady GKRS solitons on the orbifold $\C^{n}/\Gamma_\ell$,  by considering \eqref{Futaki-soliton} with $\alpha=0$ on $\cO(-\ell)^{\times}$. Such a GKRS pulls back to $\C^n$ to define a complete GKRS on $\C^n$ with soliton vector 
field $ \frac{2a}{\ell}\left(\sum_{i=1}^n r_i \frac{\partial}{\partial r_i}\right)$. However, by virtue of Remark~\ref{r:homothety}, the induced GKRS on $\C^n$ is homothetic to a GKRS in the $1$-parameter family obtained from $\cO(-1)^{\times}$.

\end{rem}

\subsection{Toric description of Cao's GKRS} We now recast the steady GKRS examples on $\cO(-n)$ and on $\C^n$ in terms of the  formalism of Section~\ref{s:toric}.  This also provides a useful link with Bryant's work~\cite{bryant}.

We consider more generally a K\"ahler structure of the form \eqref{metric},  where $\PP^{n-1}$ is replaced by any \emph{toric} compact K\"ahler--Einstein $(n-1)$-manifold $(\check{M}, \check{\omega}_{\check M}, \check{\T}^{n-1})$, such that $\frac{1}{2\pi}[\check{\omega}_{\check{M}}] \in H^2(\check{M}, \Z)$ and $2\pi c_1(\check{M})= r \check{\omega}_{\check{M}}$ for some $r\in \Z_+$; in this case,  \eqref{metric} describes a smooth K\"ahler metric $\omega$ defined on  $M^{0}=\cL^{\times} \to \check{M}$ with $\cL^r = \cK_{\check{M}}$; when $\alpha>0$ it extends smoothly to $M=\cal L$.  We note that the K\"ahler structure \eqref{metric} is toric, under a lifted action of the torus $\check{\T}^{n-1}$ to $\cL$,  and the natural $\Sph^1$-action on $\cL$ by multiplication on the fibre. More precisely, \cite[Lemma 6]{ACGL} tells us that
\begin{itemize}
\item if $\check{\mu} : (\check{M}, \check{\omega}_{\check M}, \check{\T}^{n-1}) \to \check \tor^*$ is a momentum map of the toric K\"ahler--Einstein $(n-1)$-manifold, then 
\begin{equation}\label{momenta}
\mu :=(\xi, \xi \check{\mu})
\end{equation}  is a $\T^n$-momentum map for the induced metric \eqref{metric} on $\cL$;
\item if $\check{u}(\check{\mu})$ is a symplectic potential of the K\"ahler--Einstein metric on $\check{M}$, then 
\begin{equation}\label{calabi-symplectic-potential}
U(\mu):= u(\xi) + \xi \check{u}(\check{\mu})\end{equation} is a symplectic potential of \eqref{metric}, where $u''(\xi)= \frac{1}{\Theta(\xi)}$.
\end{itemize}
Formula \eqref{momenta} determines the momentum image  of $(M, \omega)$ in terms of the momentum image of $\check{\mu}$ whereas \eqref{calabi-symplectic-potential} can be used to determine the corresponding labels of the polyhedron $\mu(M)$. 

\smallskip
Specifically, let us first consider the case $M=\cO(-n) \to \PP^{n-1}$. The normalization $[\check{\omega}] \in 2\pi c_1(\PP^{n-1})$ tells us that the corresponding labelled Delzant polytope $(\check{\Pol}, \check{\La})$  is
%\textcolor{red}{\[ \begin{split} (\check{\Pol}, \check{\La})= \left\{ x \in \R^{n-1} \,  \Big|\right. & \, \check{L}_i(x):=\big(x_i + 1\big) \ge 0,   \\ 
%  & \check{L}_0(x):=\big(1-(x_1+ \cdots +x_{n-1})\big) \ge 0, \\
%  & \hspace{1.5in}\left.i=1, \ldots n-1 \phantom{\Big|} \right\}. \end{split}\]}
 \[  (\check{\Pol}, \check{\La})= \left\{ x \in \R^{n-1} \left| \, \begin{array}{l} \check{L}_i(x):=\big(x_i + 1\big) \ge 0, \\  \check{L}_0(x):=\big(1-(x_1+ \cdots +x_{n-1})\big) \ge 0, \\  i=1, \ldots n-1  \end{array} \right.   \right\} \]
Using \eqref{momenta} and letting 
\begin{equation}\label{cao-mu-i}
 \mu_i:= \xi\check{\mu}_i + \xi, \, \,  i=1, \ldots, n-1, \qquad  \mu_n:=\xi-\sum_{i=1}^{n-1} \xi \check{\mu}_i, 
\end{equation}
we see that the image $\mu(M)$ of $(M, \omega)$ in these momentum coordinates is given by
%\[ \Pol = \left\{ \mu \in \R^n \,  \Big| \,  L_i(\mu): = \mu_i\ge 0, \,  L_0(\mu):= \left(\frac{1}{n}\sum_{i=1}^{n}\mu_i - \alpha\right)\ge 0, \,   i=1, \ldots, n \right\}. \]
\[ \Pol = \left\{ \mu \in \R^n \,  \left|\, \begin{array}{l} L_i(\mu): = \mu_i\ge 0, \\  L_0(\mu):= \left(\frac{1}{n}\sum_{i=1}^{n}\mu_i - \alpha\right)\ge 0, \\  i=1, \ldots n  \end{array} \right. \right\}. \]
Using that (see \cite{guillemin})  \[\check{u}(\check{\mu})= \frac{1}{2}\sum_{i=1}^{n-1} \check{L}_i(\check{\mu}) \log \check{L}_i(\check{\mu})\]
is a symplectic potential of $\check{\omega}_{\rm FS}$ and (see \cite{abreu0}) 
\[ u(\xi) = \frac{1}{2}(\xi -\alpha) \log(\xi-\alpha) + {\rm smooth},\] we obtain from \eqref{calabi-symplectic-potential} 
together with the fact that $\sum_{i=1}^n \mu_i = n\xi$ by \eqref{cao-mu-i}, that
\[U(\mu_1, \ldots, \mu_n) = \frac{1}{2}\sum_{i=1}^n L_i(\mu) \log L_i(\mu) +  \mathrm{smooth}. \]
The above asymptotic shows that
\[(\Pol, \La), \qquad \La:=\{L_0, \ldots, L_n\}\]
is the corresponding labelled Delzant polyhedron. In this description, it follows from \eqref{cao-mu-i} that the Killing potential of the GKRS vector field of $g_{\alpha, a}$ becomes 
\[ 2a \xi =  \frac{2a}{n}\left(\mu_1 + \cdots + \mu_n\right).\]

\bigskip
In the case of $\C^n$  ($n\ge 2$) we proceed in a similar way, but starting with the labelled polytope
%\[(\check{\Pol}, \check{\La})= \left\{ x \in \R^{n-1} \,   \Big|  \, \check{L}_i(x):=\left(x_i + \frac{1}{n}\right) \ge 0, \,  \check{L}_0(x):=\left(\frac{1}{n}-(x_1+ \cdots +x_{n-1})\right) \ge 0, \, i=1, \ldots n-1 \right\}\]
\[(\check{\Pol}, \check{\La})= \left\{ x \in \R^{n-1} \,   \left|\, \begin{array}{l} \check{L}_i(x):=\left(x_i + \frac{1}{n}\right) \ge 0, \\   \check{L}_0(x):=\left(\frac{1}{n}-(x_1+ \cdots +x_{n-1})\right) \ge 0, \\  i=1, \ldots n-1  \end{array} \right.\right\}\]
corresponding to the different normalization $[\check{\omega}_{\rm FS}] \in \frac{2\pi}{n} c_1(\PP^{n-1})$. Letting
\[ \mu_i:= \xi\check{\mu}_i + \frac{\xi}{n}, \, \,  i=1, \ldots, n-1, \qquad  \mu_n:=\frac{\xi}{n}-\sum_{i=1}^{n-1} \xi \check{\mu}_i, \]
we identify the labelled Delzant polyhedron of the GKRS metric $g_a$ on $\C^n$ to be
\[ (\Pol_n, \La) = \left\{ \mu \in \R^n \,  \Big| \,  L_i(\mu): = \mu_i\ge 0,  \,   i=1, \ldots, n \right\}. \]
The soliton vector field of $g_a$ is determined by the Killing potential
\[  2a(\mu_1+ \cdots +  \mu_n).\]

\section{Steady GKRS with a hamiltonian $2$-form}\label{s:hfkg} We recall here the description in \cite{ACG} of the K\"ahler structures admitting a non-trivial hamiltonian $2$-form of order $\ell$. We shall use this construction as an ansatz for constructing  steady GKRS's, extending the constructions via the Calabi ansatz discussed in Section~\ref{s:calabi-type}. 

\subsection{K\"ahler metrics with hamiltonian 2-forms} Let $(M, J, g, \omega)$ be a K\"ahler manifold and $\nabla$ the corresponding Levi--Civita connection.
\begin{dfn}\label{d:hamiltonian-form} A \emph{hamiltonian $2$-form} $\phi$ on $(M, g, J, \omega)$ is a  form of type $(1,1)$ on $(M, J)$ which satisfies 
\[ \nabla_X\phi = \frac{1}{2}\left(d \tr_{\omega} \phi \wedge g(JX)+ (J^*d\tr_{\omega}\phi) \wedge g(X)\right)  \]
for all vector fields $X$, where $\tr_{\omega}\phi:=\langle \phi, \omega\rangle_g$ denotes the trace of $\phi$ with respect to $\omega$. The hamiltonian form is \emph{non-trivial} if $\phi\neq 0$.
\end{dfn}

It turns out that the existence of a non-trivial hamiltonian $2$-form $\phi$ on $(M, J, g, \omega)$, which is not a multiple of $\omega$,  imposes  very strong conditions on the K\"ahler manifold.  

First (see \cite[Prop.~13]{ACG}) the $n$ elementary symmetric functions of the $n$ real eigenvalues of the Hermitian  symmetric operator $\phi \circ \omega^{-1}$  on $(TM, J, g)$ are $\omega$-hamiltonian potentials of Killing vector fields $K_1, \ldots,$ $K_n$ of $(M, J, g, \omega)$.  Thus, $K_1, \ldots, K_n$ span an abelian Lie sub-algebra $\mathfrak{t}$ in the Lie algebra  of Killing vector fields of $(J, g, \omega)$. The integer
\[\ell=  {\rm dim}(\mathfrak{t})={\rm dim} \left({\rm span}_{\R} \{K_1, \ldots, K_n\}\right)\]
is called the \emph{order} of $\phi$.  

Notice that the order of $\phi$ is zero iff  $\nabla \phi=0$.  Such a parallel hamiltonian $2$-form,  which is not a multiple of $\omega$,   gives rise to a local product structure of $(M, g, J, \omega)$, i.e. around each point  $(M, g, J, \omega) = \prod_{a=1}^N (B_a, J_a, g_a, \omega_a)$ where  ${\rm dim}_{\C}(B_a)$ is corresponding to the multiplicity of a constant eigenvalue of $\phi \circ \omega^{-1}$, seen as a complex endomorphism of $(TM, J)$. The main result \cite[Theorem 1]{ACG} extends this observation to the general case: if $(M, g, J, \omega)$ admits a non-trivial hamiltonian $2$-form of ordre $\ell\ge 1$, then around each point in a open dense subset of $M$, the K\"ahler structure $(g, J, \omega)$ is isometrically embedded in the following construction of a K\"ahler manifold:

\smallskip
We let $(B, g_B, \omega_B):= \prod_{a=1}^N(B_a, \check{g}_a, \check{\omega}_a)$ be the product of K\"ahler manifolds and  $\eta_a$ real numbers. Suppose there exists a principal $\ell$-dimensional torus  bundle $\T^{\ell} \hookrightarrow P\to B$ and an $\R^{\ell}$-valued connection $1$-form $\theta:=(\theta_1, \ldots, \theta_{\ell})$ on $L$ satisfying
\begin{equation}\label{theta}
d\theta_r = \sum_{a=1}^N (-1)^r\varepsilon_a\eta_a^{\ell-r}\check{\omega}_a.\end{equation}
Fix  real numbers
\[ -\infty \le \alpha_1< \beta_1\le \alpha_2<\beta_2\le \dots \le \alpha_{\ell} < \beta_{\ell}\le +\infty\]
and smooth functions of one variable $F_i(t)$,  defined respectively over the intervals $(\alpha_i, \beta_i)$.  
We  then consider the manifold
\[ M^{0}:= (\alpha_1, \beta_1)\times \cdots \times (\alpha_{\ell}, \beta_{\ell}) \times P\]
and define the following tensors on $M^0$:
\begin{equation}\label{k-order-ell}
\begin{split}
g&=\sum_{a=1}^N \varepsilon_a p_{nc}(\eta_a) \check{g}_a + \sum_{j=1}^{\ell} \left(\frac{p_{c}(\xi_j)\Delta(\xi_j)}{F_j(\xi_j)}\right) d\xi_j^2  \\
	 &\hspace{1.14in}+  \sum_{j=1}^{\ell}\left(\frac{F_j(\xi_j)}{p_c(\xi_j)\Delta(\xi_j)}\right)\left(\sum_{r=1}^{\ell}\sigma_{r-1}(\hat \xi_j)\theta_r\right)^2, \\
\omega &= \sum_{a=1}^N \varepsilon_a p_{nc}(\eta_a) \check{\omega}_a + \sum_{r=1}^\ell d\sigma_r\wedge \theta_r.
\end{split}
\end{equation}
In the above formulae
\begin{enumerate}
\item[$\bullet$] $\varepsilon_a\in \{-1, 1\}, a=1, \ldots, N$ is a sign constant.
\item[$\bullet$] $\xi_i \in (\alpha_i, \beta_i), i=1, \ldots, \ell$ are free variables and $\sigma_r$ (resp. $\sigma_r(\hat \xi_i)$) denotes  the $r$-th elementary symmetric function of $\{\xi_j \}$ (resp. of $\{\xi_j : j\neq i\}$). % we set $\sigma_{\ell+1}= \sigma_{\ell}(\hat \xi_j)=1$.
\item[$\bullet$] $p_{nc}(t):= \prod_{j=1}^{\ell}(t-\xi_j)$ and $p_c(t):= \prod_{a=1}^N (t-\eta_a)^{{\rm dim}_{\C}(B_a)}$.
\item[$\bullet$] $\Delta(\xi_j):= \prod_{i\neq j} (\xi_j - \xi_i)$;
\item[$\bullet$] $\theta_r$ are the components of the connection $1$-form on $P$ defined in \eqref{theta}.
\end{enumerate}
 It is shown in \cite{ACG} that if $\eta_a, \alpha_i, \beta_i$  and $F_i(t)$ are such that  
 \begin{equation}\label{inequality}
	\begin{split}
   &\varepsilon_a p_{nc}(\eta_a)>0 \, \, \textrm{on} \, \, (\alpha_1, \beta_1)\times \cdots \times (\alpha_{\ell}, \beta_{\ell}),  \\
   & \hspace{.2in} (-1)^{\ell-i}F_i(x)p_c(x)>0 \, \, \textrm{on} \, \, (\alpha_i, \beta_i), \end{split} \end{equation}
 then \eqref{k-order-ell} defines a K\"ahler structure  on $M^{0}$ with complex structure given by
 \begin{equation}\label{J2}
 \begin{split}
 Jd\xi_j &= \left(\frac{F_j(\xi_j)}{p_c(\xi_j)\Delta(\xi_j)}\right)\left(\sum_{r=1}^{\ell}\sigma_{r-1}(\hat \xi_j)\theta_r\right), \\
 J\theta_r &= (-1)^r\sum_{j=1}^\ell\frac{{p_c}(\xi_j)}{F_j(\xi_j)} \xi_j^{\ell-r} d\xi_j, 
 \end{split}
 \end{equation}
 and the  lift of the complex structure  $J_B$ on $B$ to the horizontal distribution  of $\theta$. 
 Furthermore, $(g, J, \omega)$ is $\T^{\ell}$-invariant and the smooth functions $\sigma_r,\,  r=1, \ldots, \ell$ are momenta for the $\T^{\ell}$-action. The fundamental vector fields $K_r$ associated to $\sigma_r$ are dual to $\theta_r$ by \eqref{k-order-ell}.
 Finally, the $(1,1)$ form on $(M^0, J)$
 \begin{equation}\label{eq:hamiltonian-2-form}
  \phi = \sum_{a=1}^N \eta_a p_{nc}(\eta_a)\check{\omega_a} + \sum_{r=1}^{\ell}(\sigma_rd\sigma_1- d\sigma_{r+1})\wedge \theta_r
  \end{equation}
 is a hamiltonian $2$-form of order $\ell$ on $(M^0, J, g, \omega)$ such that the eigenvalues of $\phi \circ \omega^{-1}$, seen as a complex operator on  $(TM, J)$, are the smooth functions $\xi_1, \ldots, \xi_{\ell}$ (each of multiplicity $1$)  and the constants $\eta_a$ (each of multiplicity ${\rm dim}_{\C}(B_a)$).
 
  \subsection{The space of hamiltonian $2$-forms} The space $\mathcal{H} (M, J, g, \omega)$ of all hamiltonian $2$-forms on  a given K\"ahler manifold $(M, g, J, \omega)$ is a linear subspace of $\Omega^{1,1}(M,J)$ which contains $\omega$ as a non-zero element.  Furthermore, it is shown  in \cite[Sec.2.2]{ACG} (see in particular Remark 2 in that reference) that $\mathcal{H} (M, J, g, \omega)$ is finite dimensional with ${\rm dim}\left(\mathcal{H} (M, J, g, \omega)\right) \le (n+1)^2$, where $n={\rm dim}_{\C}(M)$. Following the terminology  from \cite{CMR}, we adopt the following
 \begin{dfn} The integer $D(J, g):= \dim\left(\mathcal{H} (M, J, g, \omega)\right)$ is called the mobility of $(M, g, J, \omega)$. In general, $1\le D(J, g) \le (n+1)^2$ and the presence of a non-trivial hamiltonian $2$-form of order $\ell \ge 1$ implies $D(J, g) \ge 2$.
 \end{dfn}
The term \emph{mobility} in the above definition stems from the remarkable fact (observed in \cite{CMR})  that the vector space $\mathcal{H} (M, J, g, \omega)$  of hamiltonian $2$-forms on $(M, g, J, \omega)$ is isomorphic to the vector space generated by the set $Sol(J, g)$ of  smooth $(g, J)$  Hermitian sections  of ${\rm End}(TM)$,   defined by the K\"ahler metrics $\tilde g$ on $(M, J)$  whose Levi-Civita connections are c-projectively equivalent  to $\nabla$. The latter topic has been a subject of intensive study on its own. We will need the following result, which is a combination of Theorems 1 and 3 in   \cite{CMR}:
\begin{thm}\label{thm:CMR}\cite{CMR} Suppose $(M, J, g, \omega)$ is a connected K\"ahler manifold which admits a hamiltonian $2$-form of order $\ell\ge 1$ and satisfies $D(J, g) \ge 3$. Then, in the local form \eqref{k-order-ell} of the K\"ahler structure, all $F_j(t) = F(t)$ for  a (same)  polynomial  $F(t)$ of degree at most $(n+1)$, which is divisible by $p_c(t)$.
\end{thm}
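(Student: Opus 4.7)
The plan is to exploit the explicit local form \eqref{k-order-ell} induced by the given order-$\ell$ hamiltonian $2$-form $\phi$, and to write out the constraints that a further independent hamiltonian $2$-form $\phi'$ must satisfy on such a structure. The hypothesis $D(J,g)\geq 3$ guarantees the existence of $\phi'$ which is not a linear combination of $\omega$ and $\phi$, and the key point is that this mere existence pins down the profile functions $F_j$ to a common polynomial of degree at most $n+1$ divisible by $p_c$.

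The first step is to show that $\phi'\circ\omega^{-1}$ is block-diagonal in the frame of \eqref{k-order-ell}, with the non-constant $\xi_j$-blocks each of complex dimension one and the constant blocks $TB_a$ of complex dimension $\dim_{\C} B_a$. This uses the algebraic fact, following from Definition~\ref{d:hamiltonian-form} via Lie differentiation along the Killing fields $K_r$ generated by $\phi$, that any two hamiltonian $2$-forms commute via $\omega^{-1}$, together with the generic distinctness of the eigenvalues $\xi_1,\ldots,\xi_\ell,\eta_1,\ldots,\eta_N$. Hamiltonicity of $\phi'$ further forces each non-constant eigenvalue of $\phi'\circ\omega^{-1}$ to depend on a single $\xi_j$, say as $f_j(\xi_j)$, while those on each $TB_a$ remain constant. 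Substituting this normal form back into Definition~\ref{d:hamiltonian-form} and using \eqref{J2} decouples the PDE into, for each $j$, a second-order linear ODE for $f_j$ whose coefficients involve only $F_j$, $p_c$, and their derivatives; the homogeneous part of this ODE admits the two solutions $f_j\equiv 1$ and $f_j(t)=t$ coming respectively from $\omega$ and $\phi$.

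The assumption $D(J,g)\geq 3$ provides a third independent solution of this ODE, which upon elimination of $f_j$ yields a polynomial identity: $p_c(\xi_j)$ divides $F_j(\xi_j)$ and the quotient $F_j(t)/p_c(t)$ is a polynomial in $t$ of degree at most $\ell+1$, so that $\deg F_j\leq n+1$ via $\deg p_c=\sum_a\dim_{\C} B_a$. Finally, cross-differentiation of the hamiltonicity identity in distinct coordinates $\xi_i,\xi_j$ (for $i\neq j$), combined with real analyticity of both sides, yields $F_i(t)=F_j(t)$ on the overlap of their domains, hence a single polynomial $F(t)=F_j(t)$ for all $j$. The main obstacle is the simultaneous block-diagonalization step: one must rule out ``mixing'' components of $\phi'$ between different $\xi_j$-blocks or between non-constant and constant blocks, which requires patient differentiation of the hamiltonicity identity along combinations of the coordinate directions and careful use of the generic distinctness of the eigenvalues $\xi_j$ and $\eta_a$ to disentangle the various eigenspaces. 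Once block-diagonality is in hand, the remaining ODE analysis is essentially algebraic.
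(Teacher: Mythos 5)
The paper does not actually prove this statement: it is imported wholesale from \cite{CMR} (the sentence introducing the theorem says it is ``a combination of Theorems 1 and 3'' of that reference, whose proof runs through the correspondence between hamiltonian $2$-forms and Hermitian solutions of the c-projective mobility equation and the prolongation of that equation). So your proposal has to be judged as a standalone argument rather than against a proof in the text.

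Taken on its own terms, your sketch has a genuine gap at its first and most important step. The ``algebraic fact'' you invoke --- that any two hamiltonian $2$-forms on $(M,J,g,\omega)$ commute via $\omega^{-1}$ and can therefore be simultaneously block-diagonalized --- is false. The model case is $(\mathbb{CP}^n, g_{FS})$, which satisfies every hypothesis of the theorem (it carries hamiltonian $2$-forms of each order $1\le \ell\le n$ and has $D(J,g)=(n+1)^2\ge 3$): there the space ${\mathcal H}(M,J,g,\omega)$ is modelled on the Hermitian forms on $\C^{n+1}$, and for two generic Hermitian forms the endomorphisms $\phi\circ\omega^{-1}$ and $\phi'\circ\omega^{-1}$ do not commute at a generic point. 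Lie differentiation of Definition~\ref{d:hamiltonian-form} along the Killing fields $K_r$ of $\phi$ only shows that ${\mathcal L}_{K_r}\phi'$ is again a hamiltonian $2$-form --- it measures exactly the failure of commutation, it does not make it vanish. Hence you cannot assume the third form $\phi'$ is block-diagonal in the frame adapted to \eqref{k-order-ell}, and the reduction to decoupled ODEs in the single variables $\xi_j$, which is the engine of the whole argument, never gets started. (A possible repair is to average $\phi'$ over the torus generated by $K_1,\ldots,K_\ell$, but one must then rule out that the average falls into $\mathrm{span}\{\omega,\phi\}$, and you supply no argument for that.) A smaller but real defect occurs at the end: $F_i$ and $F_j$ live on the disjoint intervals $(\alpha_i,\beta_i)$ and $(\alpha_j,\beta_j)$, so there is no ``overlap of their domains'' on which to compare them; the identity $F_i=F_j$ must come from the mixed $\xi_i,\xi_j$ components of the hamiltonicity identity once both are known to be polynomials, not from agreement on a common open set.
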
 

 \subsection{The hamiltonian $2$-form ansatz for steady K\"ahler-Ricci solitons}
 \smallskip
 The local differential geometry of the K\"ahler structures $(g, \omega)$ of the form \eqref{k-order-ell} is studied in detail in \cite{ACG}. Here we collect the following results from that reference:
 \begin{enumerate}
 \item[$\bullet$] \cite[p.~391]{ACG} Denote by  $\check{h}_a$  a local potential for $\check{\omega}_a$, i.e. $dd^c_B \check{h}_a= \check{\omega}_a$. Then  the smooth functions \[y_r:=-\sum_{a=1}^N(-1)^r \, {\varepsilon_a}\eta_a^{\ell-r}\check{h}_a - \sum_{j=1}^{\ell}\int^{\xi_j}\frac{(-1)^r p_c(t)t^{\ell-r}}{F_j(t)} dt, \]
 where $r=1, \dots, \ell$, are pluriharmonic on $M^{0}$, i.e. they satisfy $dd^c_M y_r=0$.
 \item[$\bullet$] \cite[p.~394]{ACG} If $\check{\kappa}_a$ denotes a local Ricci potential for the K\"ahler form $\check{\rho}_a$ of the K\"ahler metric $\check{\omega}_a$ on $B_a$, i.e. $\check{\rho}_a = dd^c_{B_a} \check{\kappa}_a$, then
 \[ \kappa := \sum_{a=1}^N \check{\kappa}_a -\frac{1}{2}\sum_{j=1}^\ell\log|F_j(\xi_j)|\]
 is a local Ricci potential of $(g, \omega)$, i.e. the Ricci form $\rho$ of $(g, \omega)$ satisfies $\rho=dd^c_M \kappa$.
 \end{enumerate}
 We obtain from the above facts that 
 \begin{lemma}\label{l:KRS-ell} A K\"ahler structure
 $(g, \omega, J)$ defined by \eqref{k-order-ell} gives rise to a steady GKRS provided that each $F_j$ satisfies  \begin{equation}\label{KRS-ell}
 -\frac{1}{2} \log |F_j(\xi_j)| + \frac{1}{2}\int^{\xi_j}\frac{p_c(t)q(t)}{F_j(t)} dt = a\xi_j +b_j,
 % \footnote{\textcolor{blue}{CC:I think the ``+b'' that we had before was confusing to people, since it does not technically have to be the same b for each j. On the other hand, we do technically need the constant term here, since we really solve the differential version of the equation \eqref{ODE-F}, and we cannot guarantee that the RHS is always precisely equal to $a \xi_j$ and not $a \xi_j + const$.  }}
 \end{equation}
 where 
 \[ q(t) = \sum_{r=1}^{\ell} q_r t^{\ell-r}\]
 is a polynomial of degree at most $\ell-1$,  $a, b_j$ are real constants, and, furthermore,  each $(B_a, \check{g}_a, \check{\omega}_a)$ is a K\"ahler-Einstein manifold of scalar curvature $\check{s}_a= -{\rm dim}_{\C}(B_a)q(\eta_a)\varepsilon_a$. In this case, the Killing potential of the soliton vector field $X$ is \[f= 2a(\xi_1 + \cdots + \xi_{\ell})=  2a\sigma_1.\]
 \end{lemma}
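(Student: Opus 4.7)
The plan is to translate the soliton equation $\rho = \tfrac12 \mathcal{L}_{JX}\omega$ (with $X = J\nabla f$) into a pluriharmonicity statement and verify it directly via the explicit pluriharmonic functions $y_r$ from the first bulleted fact above. On any K\"ahler manifold this equation is equivalent to $dd^c_M(2\kappa - f)=0$, where $\kappa$ is a local Ricci potential. Inserting the formula for $\kappa$ from the second bulleted fact and the candidate Killing potential $f = 2a\sigma_1 = 2a\sum_{j=1}^\ell \xi_j$ (a genuine Killing potential, since $\sigma_1$ is a momentum for the $\T^\ell$-action on $(M^0,\omega)$), the problem reduces to showing that
\[
H := 2\sum_{a=1}^N \check\kappa_a \;-\; \sum_{j=1}^\ell \log|F_j(\xi_j)| \;-\; 2a\sum_{j=1}^\ell \xi_j
\]
is pluriharmonic on $M^0$.

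I would then compute $H - \sum_{r=1}^\ell q_r y_r$ for free real coefficients $q_r$. Direct substitution of the formula for $y_r$ yields
\[
H - \sum_{r=1}^\ell q_r y_r = \sum_{a=1}^N\bigl[2\check\kappa_a + \varepsilon_a q(\eta_a)\check h_a\bigr] + \sum_{j=1}^\ell\Bigl[-\log|F_j(\xi_j)| - 2a\xi_j + \int^{\xi_j}\!\tfrac{p_c(t)q(t)}{F_j(t)}\,dt\Bigr],
\]
where $q(t) := \sum_{r=1}^\ell q_r (-1)^r t^{\ell-r}$ is an arbitrary polynomial of degree at most $\ell-1$. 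The hypothesis \eqref{KRS-ell} is precisely the statement that each bracket in the second sum equals a constant ($=2b_j$), while the K\"ahler--Einstein assumption $\check\rho_a = -\tfrac{\varepsilon_a}{2}q(\eta_a)\check\omega_a$ -- equivalent to $\check s_a = -\dim_\C(B_a)\,q(\eta_a)\,\varepsilon_a$ -- makes each bracket in the first sum pluriharmonic on $B_a$, since $dd^c_{B_a}\check\kappa_a = \check\rho_a$ and $dd^c_{B_a}\check h_a = \check\omega_a$.

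Granting these two conditions, $H - \sum_r q_r y_r$ equals a constant plus a pluriharmonic function pulled back from $B=\prod_a B_a$. That pullback remains pluriharmonic on $(M^0, J)$ because, by \eqref{J2}, the horizontal distribution of $\theta$ is $J$-invariant and $J$ restricts there to the lift of $J_B$. Adding back $\sum_r q_r y_r$, which is pluriharmonic on $M^0$ by the first bulleted fact, one concludes that $H$ is pluriharmonic on $M^0$, and therefore $(g,\omega,J)$ is a steady GKRS with the asserted Killing potential $f = 2a\sigma_1$. The only point that requires real care is the sign bookkeeping in the expansion of $\sum_r q_r y_r$: the factors $(-1)^r$ in the $y_r$-formula have to be absorbed into the definition of $q(t)$ so that its coefficients correctly parametrize all admissible polynomials and match up simultaneously with both the K\"ahler--Einstein constants at each $\eta_a$ and the ODE \eqref{KRS-ell} governing each $F_j$.
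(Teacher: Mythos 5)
Your proof is correct and follows essentially the same route the paper intends: the lemma is presented there as a direct consequence of the two quoted facts from \cite{ACG} (pluriharmonicity of the $y_r$ and the formula for the Ricci potential $\kappa$), and your computation of $2\kappa - f - \sum_r q_r y_r$ is exactly the deduction being elided. The sign bookkeeping you flag is harmless (it only relabels the coefficients of the arbitrary degree-$\le(\ell-1)$ polynomial $q$), and your Kähler--Einstein constant matches the stated $\check{s}_a = -\dim_{\C}(B_a)\,q(\eta_a)\,\varepsilon_a$.
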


 \begin{rem}
Letting  in the above construction 
\[ N=1, \,  \varepsilon_1=-1,  \, \eta_1=0, \, \ell=1,  \, \alpha_1=\alpha \ge 0, \,  \beta_1=+\infty,\]  up to renaming the variables, we obtain the Calabi ansatz in Lemma~\ref{KRS-Calabi}.\end{rem}

  \begin{rem} Although we do not use this below, one can show that Lemma~\ref{l:KRS-ell} describes the general form of a steady GKRS for which the K\"ahler structure $(g, J, \omega)$ is given by \eqref{k-order-ell}  with $\ell \ge 1$  and the soliton Killing potential $f$  is $\T^{\ell}$-invariant.  This can be obtained along  the following lines. As the fibres of $(M^0, g, J) \to B$ are totally geodesic toric manifolds  (see \cite[Prop.~9]{ACG}), $f$ induces on each fibre a $\T^{\ell}$-invariant Killing potential, and thus it has the form
\[ f = \sum_{r=0}^{\ell} \check{f}_r \sigma_r, \]
where $\check{f}_r$ are (pullbacks of) functions on the base.  On the other hand, the expression above of the Ricci potential $\kappa$  of  $(g, J, \omega)$  is separable with respect to the variables $\xi_j$ and $B$. Similarly, one can use the explicit form of the 
complex structure to show that (see e.g. \cite[(4.13), p.~2583]{jubert} for a more general calculation) a $\T^{\ell}$-invariant pluriclosed function is necessarily given by a linear combination of the functions $y_r$ defined above plus the pullback of a pluriclosed function on the base, i.e. it too is separable with respect to the variables $\xi_j$ and $B$.  It then follows that $f$ must also be separable, and therefore is of the form $f=\check{f}_0 + a\sigma_1$ for a constant $a$ and a smooth function $\check{f}_0$ on $B$.  As $\check{f}_0$ is then a Killing potential for the metric \eqref{k-order-ell}, it must be constant as well (this can be checked for instance by using the formulae in  \cite[Prop.~9 and Lemma 4]{ACG}).  \end{rem} 

The condition \eqref{KRS-ell} is equivalent to the (separated) ODE's
\begin{equation}\label{ODE-F}
F_j'(t) + 2aF_j(t) = q(t)p_c(t), \qquad j=1, \ldots, \ell, \end{equation}
whose general solution is (see \eqref{general-solution})
\begin{equation}\label{general-solution-ell}
F_j(t) =e^{-2at}\left(\int_0^t e^{2ax}q(x)p_c(x)dx+ c_j\right), \qquad j=1, \ldots, \ell.
\end{equation}

\section{New examples of toric steady GKRS on $\C^n$}
We discuss here how  the steady GKRS's of the form \eqref{k-order-ell} can be defined on a dense subset $M^{0} \subset \C^{n}$ and smoothly extended to $\C^{n}$.  As mentioned in the introduction, we produce two families corresponding to different choices of parameters in the construction presented in Section \ref{s:hfkg}.  As we will see, these two families turn out to exhibit rather distinct geometric properties. 

\subsection{Generalizing Cao's example: Proof of Theorem~\ref{thm:Cao}}\label{s:type-C^n}
We will first assume that
\[-\infty< \alpha_1<\alpha_2< \cdots < \alpha_{\ell} < \infty, \]
and the variables  $(\xi_1, \ldots, \xi_{\ell})$ belong to the domain
\[\mathring{\rm D} :=(\alpha_1, \alpha_2)\times \cdots \times (\alpha_{\ell-1}, \alpha_{\ell})\times (\alpha_{\ell}, +\infty).\]
For each $\alpha_j, j=1, \ldots, \ell$, we associate a complex projective space $\PP^{d_j}$ of (complex) dimension $d_j$ (which can be also $0$, i.e. we allow for $\PP^{d_j}$ to be a point).  We put the constants $\eta_j:=\alpha_j$ in the construction \eqref{k-order-ell}, so that in our case  
\[ p_c(t)= \prod_{j=1}^{\ell}(t-\alpha_j)^{d_j}. \]
The first inequality in \eqref{inequality} then determines the alternating signs 
\begin{equation}\label{Cao-signs}
\varepsilon_j= (-1)^{\ell-j+1}, \, j=1, \ldots, \ell.\end{equation}

\bigskip
We next construct solutions of \eqref{ODE-F}  for an appropriate choice of the polynomial $q(t)$.
We let
\begin{equation}\label{F}
 F_j(t) = F(t):=e^{-2at}\left(\int_{\alpha_1}^t e^{2ax}q(x)p_c(x)dx \right), \qquad j=1, \ldots, \ell, \end{equation}
so that $F(\alpha_1) = 0$, and impose further~\footnote{In the case $\ell=1, \, d_1=(n-1)$, which corresponds to Cao's construction on $\C^n$ reviewed in Section~\ref{s:calabi-type}, this extra-assumption is vacuous.} 
\[F(\alpha_j)=0, \qquad j=2, \ldots, \ell. \]
The latter condition places $(\ell-1)$ linear constraints on the coefficients of the polynomial $q(t)$. As $q(t)$  must be of degree $\le (\ell-1)$,  this should determine $q(t)$ up to an overall multiplicative constant $c$.  To establish this rigorously, observe that for any polynomial $Q(t)$ of degree $\le (n-1)$,   the general solution of the ODE $F'(t) + 2a F(t) = Q(t)$ (with $a\neq 0$) is of the form $F(t) = P(t) + c e^{-2at},$ where $P(t)$ is a polynomial of degree $\le (n-1)$. In our case, $Q(t)=p_c(t) q(t)$,   and then the conditions that $F(\alpha_j)=0, \, j=1, \ldots, \ell$ actually yield that each $\alpha_j$ must be  a zero of $F(t)$ of order at least $(d_j+1)$. For fixed $c, \alpha_1, \ldots, \alpha_{\ell}$, the latter conditions on $F(t)$ prescribe the values of $P(t)$ up to order $d_j$ at each $\alpha_j$. By the Lagrange--Sylvester interpolation formula,  this determines  $P(t)=P_{a,c}(t)$ with $c$ being a scale constant. Given $P_{a,c}(t)$, the corresponding $q(t)=q_{a,c}(t)$ is then defined from the equation \eqref{ODE-F} for $F(t)=P_{a,c}(t) + c e^{-2at}$, i.e.
\[ q(t) = \frac{F'(t) + 2a F(t)}{p_c(t)}=\frac{P_{a,c}'(t) + 2a P_{a, c}(t)}{p_c(t)}. \]
The divisibility of the polynomials at the RHS is insured by the fact that  each $\alpha_j$ is a zero of order at least $d_j$ of $F'(t) + 2a F(t)$.

\smallskip
In order to satisfy the second inequality in \eqref{inequality},  we let 
\[\Theta(t) := \frac{F(t)}{p_c(t)}. \] The inequality then is 
\begin{equation}\label{positivity-Theta}
(-1)^{\ell-i}\Theta(t) >0 \, \, \mathrm{on} \, \, (\alpha_i, \alpha_{i+1}), \, \,  i=1, \ldots, \ell-1, \qquad \Theta(t)>0 \, \, \mathrm{on} \, \, (\alpha_\ell, \infty).\end{equation}
As $F(t)$ is an analytic function on $\R$, the ODE \eqref{ODE-F} yields that $\Theta(t)$ is a smooth function satisfying
\begin{equation}\label{ODE-Theta}
\Theta'(t) + \left( 2a+ \sum_{j=1}^\ell \frac{d_j}{(t-\alpha_j)}\right)\Theta(t)= q(t).\end{equation}
Taking the limit $t\to \alpha_i$, \eqref{ODE-Theta} shows that 
\begin{equation}\label{boundary-Theta}
\Theta(\alpha_i)=0, \qquad (d_j+1)\Theta'(\alpha_i)= q(\alpha_i), \qquad i=1, \ldots, \ell.\end{equation} 
As $p_c(t)$ has a constant sign on each interval $(\alpha_i, \alpha_{i+1})$ and we have $\int_{\alpha_i}^{\alpha_{i+1}} e^{2ax}q(x)p_c(x)dx =0$, it follows that $q(t)$ must have a root in each interval  $(\alpha_i, \alpha_{i+1}), i=1, \ldots, \ell-1$.  As $q(t)$ has degree at most $(\ell-1)$, it must have a simple root at each of the intervals (and the degree of $q(t)$ must be $(\ell-1)$ if $a\neq 0$). It thus follows that $q(\alpha_i)= (d_j+1)\Theta'(\alpha_i)$ changes its sign as $i=1, \ldots, \ell$.  A little calculus, using that $q(t)$ has a unique simple root in $(\alpha_i, \alpha_{i+1})$ and \eqref{ODE-Theta}, shows further that  $\Theta(t)$ cannot have a zero in $(\alpha_i, \alpha_{i+1})$ and in $(\alpha_{\ell}, +\infty)$.
Changing the sign of $q(t)$  if necessary (and thus of $F(t)$ and $\Theta(t)$), we can assume that both inequalities in \eqref{inequality} are 
satisfied,  i.e. (see \eqref{Cao-signs}) we have on $\mathring{\rm {D}}$
\begin{equation}\label{Cao-signs-final}
\varepsilon_j p_{nc}(\alpha_j)>0, \qquad -\varepsilon_j q(\alpha_j) >0, \qquad j=1, \ldots, \ell.
\end{equation} 

\bigskip
We now construct a principal $\T^{\ell}$-bundle $P\to \prod_{j=1}^{\ell} \PP^{d_j}$ in order to apply the ansatz \eqref{k-order-ell} on
\[ M^{0}:= (\alpha_1, \alpha_2)\times \cdots \times (\alpha_{\ell -1}, \alpha_{\ell})\times(\alpha_{\ell}, +\infty) \times P \to B:=\prod_{j=1}^{\ell} \PP^{d_j}.\]
We let $\check{\omega}_j$ be a Fubini--Study metric on $\PP^{d_j}$ of scalar curvature (see Lemma~\ref{l:KRS-ell} and \eqref{boundary-Theta})
\[ s_j = -\varepsilon_j d_j(d_j+1) \Theta'(\alpha_j) = - \varepsilon_j d_j q(\alpha_j), \qquad j=1, \ldots, \ell.\]
The positivity of $s_j$ is then consistent with \eqref{inequality} (see \eqref{positivity-Theta} and \eqref{boundary-Theta}). Notice that
\[ \check{\omega}_j = -\left(\frac{2}{\varepsilon_j \Theta'(\alpha_j)}\right) \check{\omega}_j^0= -\left(\frac{2(d_j+1)}{\varepsilon_j q(\alpha_j)}\right)\check{\omega}_j^0, \]
where $\check{\omega}_j^0$ is the Fubini--Study metric on $\PP^{d_j}$of constant scalar curvature $2d_j(d_j+1)$, so that $[\check{\omega}_j^0]$ is the primitive generator of $H^2(\PP^{d_j}, \Z)$. We let
\begin{equation}\label{vj-def}
 v_j := \left(\frac{2(d_j+1)}{q(\alpha_j)}\right)\Big(\alpha_j^{\ell-1}, \ldots, (-1)^{r-1}\alpha_j^{\ell-r}, \ldots, (-1)^{\ell-1} \Big)  \in \R^n, % \qquad j=1, \ldots, \ell.
 \end{equation}
where $j=1, \ldots, \ell.$ As $\alpha_j$ are pairwise distinct, $\{v_j\}$ is a basis of $\R^\ell$ and we denote by $\Gamma_v\cong \Z^{\ell}$ the lattice spanned by this basis; we then consider $\T^{\ell}:= \R^\ell/2\pi\Gamma_v$ the corresponding compact torus. We can rewrite the condition \eqref{theta} as
\[ d\theta = \sum_{j=1}^{\ell} \check{\omega}_j^0 \otimes v_j, \]
showing that there exists a principal $\T^{\ell}$-bundle $P$ over $B=\prod_{j=1}^{\ell} \PP^{d_j}$ with connection $1$-form $\theta$ as above. 

\begin{conclusion}\label{construction}
By Lemma~\ref{l:KRS-ell}, the K\"ahler metric $(g, J, \omega)$ defined by \eqref{k-order-ell}  and the data $q(t), a, \alpha_1<\cdots < \alpha_{\ell}$ and $\eta_j = \alpha_j$ 
 gives rise to a steady GKRS metric on $M^{0}$.
\end{conclusion}

To understand further the global properties of the K\"ahler metric thus defined, we notice that the $\T^{\ell}$-momenta 
\[ \sigma =\Big(\sigma_1(\xi_1,\ldots, \xi_{\ell}), \ldots, \sigma_{\ell}(\xi_1, \ldots, \xi_{\ell})\Big)\]
are given by the elementary symmetric functions $\sigma_i$ of the variables $(\xi_1, \ldots, \xi_{\ell})$, and thus define a diffeomorphism
\[ \sigma : (\alpha_1, \alpha_2)\times \cdots \times (\alpha_{\ell -1}, \alpha_{\ell})\times(\alpha_{\ell}, +\infty) \cong \mathring{\Pol}_{\ell} \subset \R^{\ell},\]
where 
%\[ \mathring{\Pol}_{\ell} : = \left\{ (\sigma_1, \ldots, \sigma_{\ell})  \in \R^{\ell}:  L_j(\sigma):= \left(\langle v_j, \sigma \rangle  - \frac{2\alpha_j^{\ell}(d_j+1)}{q(\alpha_j)}\right)> 0, \, \, j=1, \ldots, \ell \right\}. \]
\[ \mathring{\Pol}_{\ell} : = \left\{ (\sigma_1, \ldots, \sigma_{\ell})  \in \R^{\ell} \, \left| \, \begin{array}{l}  L_j(\sigma):= \left(\langle v_j, \sigma \rangle  - \frac{2\alpha_j^{\ell}(d_j+1)}{q(\alpha_j)}\right)> 0,  \\  j=1, \ldots, \ell \end{array} \right. \right\}. \]
By the definition \eqref{vj-def} of $v_j$, the affine-linear functions $L_j(\sigma)$ can be expressed as
\begin{equation}\label{fiber-label}
L_j(\sigma)= -\frac{2(d_j+1)}{q(\alpha_j)}\left(\sum_{r=0}^{\ell}(-1)^r\alpha_j^{\ell-r}\sigma_r\right)=-\frac{2(d_j+1)}{q(\alpha_j)}p_{nc}(\alpha_j), % \qquad j=1, \ldots, \ell.
\end{equation}
where $j=1, \ldots, \ell.$ These define a labelled cone  $\left({\Pol}_{\ell}, L_j\right)$ in $\R^{\ell}$  which, with respect to the lattice generated by $\{v_1, \ldots, v_{\ell}\}$,  is the Delzant polyhedron of the flat standard symplectic structure on $\R^{2\ell}$. Indeed, by \eqref{Cao-signs-final},  $L_j(\sigma)=-\frac{2(d_j+1)}{q(\alpha_j)}p_{nc}(\alpha_j)>0$ on $\mathring{\Pol}_\ell$.%\footnote{\textcolor{blue}{CC:Do we need to explain later what happens in the Taub-NUT case? Something different also happens in that case when $a = 0$, since the degree of $q$ drops by 1. {\color{blue} VA: I don't see a difference in these cases. I corrected the definition of $\varepsilon_j$ in the Taub-NUT case,  and I fixed a sign error in $\varepsilon_j q(\alpha_j)$ in the Cao case. The right inequalities come from $\varepsilon_{\ell}p_{nc}(\alpha_{\ell}) >0$ against $\Theta_{\ell}'(\alpha_{\ell})>0$. The  first inequality is a part of the ansatz whereas the latter inequality holds for completions with infinite end $(\alpha_{\ell}, + \infty)$.} } }

 It thus follows that $M^{0} \cong (\C^{\times})^{\ell}\times_{\T^{\ell}} P \to B$ is,  in fact,  diffeomorphic  (but not in general biholomorphic for the induced complex structure via $(g, \omega)$) to a principal $(\C^{\times})^{\ell}$-bundle over $B=\prod_{j=1}^{\ell}\PP^{d_j}$,  corresponding  to  the split vector bundle
\[M^{0} \subset  \hat M:=\left(\bigoplus_{j=1}^{\ell}\cO_{\PP^{d_j}}(-1)\right) \to \prod_{j=1}^{\ell}\PP^{d_j} \]
thus generalizing the setting  in the previous section; notice that under the blow-down map $B: \hat M \to \C^n$, we have that $M^0 \subset \C^n\cong \R^{2n}$. 

Noting that  the base $B= \prod_{j=1}^{\ell}(\PP^{d_j}, \check{\omega}_j^0)$ is  itself a smooth toric manifold,  with corresponding Delzant polytope
\[\Pol_B = \prod_{j=1}^{\ell} \Pol_{\PP^{d_j}}\]
where each $\Pol_{\PP^{d_j}} \subset \R^{d_j}$ is  a the standard simplex 
\[\Pol_{\PP^{d_j}}=\left\{(\check{\mu}^j_{1}, \ldots, \check{\mu}^{j}_{d_j}) \, : \, \check{\mu}^j_k \ge 0, \, k=1, \ldots, d_j, \,  (1-\sum_{k=1}^{d_j} \check{\mu}^{j}_k)\ge 0\}\right\},\]
we are going to describe $(M^0, \omega)$ as a toric $2n$-dimensional manifold. To this end, denote by 
\begin{equation}\label{check-label}
 \check{L}^{j}_k(\check{\mu}^{j}) := \check{\mu}^j_{k},  \, \, k=1, \ldots d_j, \qquad \check{L}^{j}_0(\check{\mu}^j) : = (1-\sum_{k=1}^{d_j} \check{\mu}^{j}_k)\end{equation}
 the  affine-linear functions (which we refer to as \emph{labels} of $\Pol_{\PP^{d_j}}$)  defining the Delzant polytope of each factor $\PP^{d_j}$ of the base. The toric action of $\check{\T}^{n-\ell} = \prod_{j=1}^{\ell} \check{\T}^{d_j}$ on the base lifts to define a $\T^{n}$-action on the total space $\hat M=\left(\bigoplus_{j=1}^{\ell}\cO_{\PP^{d_j}}(-1)\right) \to \prod_{j=1}^{\ell}\PP^{d_j}$ under which $(M^{0}, \omega, J, g)$ is invariant. As a matter of fact,  $(\omega, J, g)$ is obtained by the \emph{generalized Calabi construction} (see e.g. \cite{ACGT1}) and we know 
from \cite[Lemma~5]{ACGT1} that
\begin{equation}\label{mu}
\mu= (\sigma, L_1(\sigma)\check{\mu}^{1}, \ldots, L_{\ell}(\sigma)\check{\mu}^{\ell})
\end{equation}  are $\omega$-momenta for the induced $\T^n$-action on $M^0$, where we recall $L_j(\sigma)$ are the labels of $\mathring{\Pol}_{\ell}$ introduced in \eqref{fiber-label}. It follows that \eqref{mu} defines a diffeomorphism
\[ \mu : \mathring{\Pol}_{\ell}\times \mathring{\Pol}_{\PP^{d_1}} \times \cdots \times \mathring{\Pol}_{\PP^{d_\ell}} \cong \mathring{\Pol}_n, \]
where 
\[ \mathring{\Pol}_n := \left\{\mu \in \R^n \, \,  | \,  \,  L_j(\sigma)\check{L}^j_k(\check{\mu}^j) >0, \,   \qquad j=1, \ldots \ell, \, k=0, \ldots d_j\right\}, \]
and  $\mu(M^0)\subset \Pol_n$.  Notice that $L_j(\sigma)\check{L}^j_k(\check{\mu}^j)$ are affine-linear in $\mu$ (see \eqref{mu}) and define a labeled cone $\left(\Pol_n, L_j(\sigma)\check{L}^j_k(\check{\mu}^j)\right)$ in $\R^n$, corresponding to the flat $(\R^{2n}, \omega_0)$ (seen as a toric manifold via the Delzant description).

\smallskip
We now prove the following
\begin{lemma}\label{irreducible} The steady GKRS $(g, \omega)$ defined on $M^0$ as above extends to a smooth steady GKRS metric on $\R^{2n}=\prod_{j=1}^{\ell} \R^{2(d_j+1)}$, invariant under the standard $\T^n$-action, and compatible with the standard symplectic form. Moreover, the soliton vector field is
\[   2a\left(\sum_{j=1}^{\ell} \frac{q(\alpha_j)}{2(d_j +1)\prod_{k\neq j}(\alpha_j-\alpha_k)}X_j\right), \]
where $X_j$ is the vector field on $\R^{2(d_j+1)}\cong \C^{d_j+1}$ whose flow is multiplication with $e^{i2\pi t}$;  $(g, \omega)$ is the flat K\"ahler metric on $\R^{2n}\cong \C^n$ for $a=0$ and is an irreducible GKRS for $a\neq 0$. 
\end{lemma}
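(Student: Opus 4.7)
The proof proceeds in four stages mirroring the four assertions: (a) smooth extension to a toric $\omega_0$-compatible K\"ahler metric on $\R^{2n}$; (b) identification of the induced complex structure with the standard one on $\C^n$; (c) the explicit formula for the soliton vector field; (d) the flat-vs-irreducible dichotomy. The excerpt has already identified the $\omega$-momenta of the $\T^n$-action on $M^0$, shown that the labels $L_j(\sigma)\check L^j_k(\check\mu^j)$ are affine-linear in $\mu$, and observed that $(\Pol_n,\{L_j\check L^j_k\})$ is the Delzant polyhedron for $(\R^{2n},\omega_0,\T^n)$; so the formal setup of Section~\ref{s:toric} is in place, and it suffices to run the corresponding checks.

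For (a), I would write the symplectic potential in base-fiber split form $U(\mu)=\tilde u(\sigma)+\sum_{j=1}^\ell L_j(\sigma)\check u_j(\check\mu^j)$, with $\check u_j$ the Guillemin symplectic potential on the simplex $\Pol_{\PP^{d_j}}$ and $\tilde u$ the orthotoric symplectic potential whose Hessian in $\xi$-coordinates is the diagonal matrix with entries $\Delta(\xi_j)/\Theta(\xi_j)$. Then the two boundary conditions of Proposition~\ref{compactification} have to be checked facet by facet. For the ``base'' facets $\check L^j_k=0$, everything reduces to the Guillemin description of the Fubini--Study metric on $\PP^{d_j}$. For the ``fiber'' facets $L_j(\sigma)=0$, which correspond via \eqref{fiber-label} to some $\xi_k\to\alpha_j$, the standard orthotoric boundary analysis from \cite{ACGT1} applies precisely because $\Theta(\alpha_j)=0$ is a simple zero with $(d_j+1)\Theta'(\alpha_j)=q(\alpha_j)\ne 0$ by \eqref{boundary-Theta}. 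This is the main technical step, but by now it is routine in the generalized Calabi literature.

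For (b), I would apply Lemma~\ref{criterion}. The ODE \eqref{ODE-Theta} with $a>0$ and $q$ of degree $\ell-1$ forces the asymptotic $\Theta(t)\sim q_1 t^{\ell-1}/(2a)$ as $t\to\infty$, so that the radial coefficient $\Delta(\xi_\ell)/\Theta(\xi_\ell)$ is bounded below by a positive constant, giving $\int^\infty \sqrt{\Delta/\Theta}\,d\xi_\ell=\infty$ and hence completeness of $g$. A similar estimate shows that $\|X_v\|_g$ grows at most linearly in the distance. Proposition~\ref{biholomorphic} then identifies $(M,J)$ with the toric variety of the fan of $\Pol_n$, which is $\C^n$ with the standard complex structure and $\T^n$-action.

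For (c), by Lemma~\ref{l:KRS-ell} the Killing potential is $2a\sigma_1$, so the soliton vector field is $2aK_{e_1}$ in the natural basis of the fiber $\R^\ell$. Expanding $e_1=\sum_j b_j v_j$ in the lattice basis \eqref{vj-def} and inverting the signed Vandermonde matrix $[v_1|\cdots|v_\ell]$ via Lagrange interpolation at the nodes $\alpha_1,\dots,\alpha_\ell$ yields
\[
b_j=\frac{q(\alpha_j)}{2(d_j+1)\prod_{k\ne j}(\alpha_j-\alpha_k)},
\]
and pushing forward $K_{v_j}$ by the blowdown $B:\hat M\to\C^n$ (under which the $j$-th fiber $\mathrm{U}(1)$ becomes the diagonal $\mathrm{U}(1)$-action on $\C^{d_j+1}$, i.e.\ the flow of $X_j$) produces the stated formula. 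For (d), when $a=0$ the uniqueness of \eqref{ODE-F} under the boundary conditions $F(\alpha_j)=0$ forces $U$ to agree up to an affine-linear function with the canonical potential $u_0$, so $g$ is the flat K\"ahler metric. For $a\ne 0$, all $b_j$ are nonzero since the simple roots of $q$ lie strictly inside the open intervals $(\alpha_i,\alpha_{i+1})$ and not at any $\alpha_j$; moreover the hamiltonian $2$-form of order $\ell$, whose non-constant eigenvalues $\xi_1,\dots,\xi_\ell$ are coupled through the metric coefficients $\Delta(\xi_j)/\Theta(\xi_j)$, obstructs any nontrivial Riemannian product decomposition of the GKRS.
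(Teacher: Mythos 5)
Your parts (a) and (c) follow essentially the same route as the paper: the split symplectic potential $U(\mu)=u(\sigma)+\sum_j L_j(\sigma)\check u^j(\check\mu^j)$, the boundary analysis reducing to \cite[Thm.~2]{Abreu1} via the identity $\sum_k\check L^j_k=1$, and the Vandermonde/Lagrange inversion producing the coefficients $q(\alpha_j)/\bigl(2(d_j+1)\prod_{k\neq j}(\alpha_j-\alpha_k)\bigr)$. Your part (b) (completeness and identification of $J$ with $J_0$) is not actually part of this lemma --- it is the content of Lemmas~\ref{l:completion} and \ref{l:biholomorphic} --- so it is harmless but misplaced.

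The genuine gap is in part (d), the irreducibility for $a\neq 0$. You assert that the order-$\ell$ hamiltonian $2$-form, ``whose non-constant eigenvalues are coupled through the metric coefficients, obstructs any nontrivial Riemannian product decomposition.'' This is not an argument, and the bare existence of a hamiltonian $2$-form of positive order does not by itself preclude a local product structure. The paper's argument runs the implication in the opposite direction through a mobility count: if $(g,\omega)$ were a local product, the parallel $(1,1)$-form determined by the splitting would be an \emph{additional} hamiltonian $2$-form (of order $0$, not a multiple of $\omega$), forcing $D(J,g)\ge 3$; Theorem~\ref{thm:CMR} would then force all $F_j(t)=F(t)$ to be a \emph{polynomial} divisible by $p_c(t)$, whereas here $F(t)=P(t)+ce^{-2at}$ with $c\neq 0$ when $a\neq 0$ (since a polynomial solution would have to vanish to order $d_j+1$ at each $\alpha_j$, hence have degree $\ge n$, while $\deg P\le n-1$). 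Without this, or some substitute, irreducibility is unproved. A smaller issue: for $a=0$ you claim uniqueness of the ODE solution ``forces $U$ to agree up to an affine-linear function with $u_0$''; that implication does not follow from uniqueness alone and requires either the explicit partial-fraction computation of $u(\sigma)$ for $\Theta(t)=\lambda\prod_k(t-\alpha_k)$ or a citation such as \cite[Prop.~17]{ACG}, which is what the paper uses.
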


\begin{proof} On $M^0= \mu^{-1}(\mathring{\Pol}_n) \cong \mathring{\Pol}_n\times \T^n\cong (\C^{\times})^n$,  the GKRS $(g, \omega)$ admits a description in terms of a symplectic potential $U(\mu)$. From  \cite[Lemma~6]{ACGL} 
\begin{equation}\label{symplectic-potential}
U(\mu):= u(\sigma) + \sum_{j=1}^{\ell}L_j(\sigma)\check{u}^j(\check{\mu}^j)\end{equation}
where  $u(\sigma)$ is a fibre-wise symplectic potential given by (see \cite[Prop.~11]{ACG})
\[ u(\sigma)= -\sum_{j=1}^{\ell}\int^{\xi_j}\frac{p_{nc}(t)}{\Theta_j(t)} dt, \qquad \Theta_j(t) := \frac{F_j(t)}{p_c(t)},  \]
and  
\[\check{u}^j(\check{\mu}^j)= \frac{1}{2}\left(\sum_{k=0}^{d_j} \check{L}^j_k(\check{\mu}^j) \log \check{L}^j_k(\check{\mu}^j)\right)\] are the symplectic potentials of the Fubini--Study metrics $\check{\omega}^0_j$ on $\PP^{d_j}$ (see \cite{guillemin}). Similarly to the proof of \cite[Prop.~14]{AR}, the formula \eqref{symplectic-potential} allows us to show the smooth extension of the metric to $\R^{2n}$, viewed as a toric manifold with Delzant image $\Pol_n$. Indeed, using that $dL_j(\sigma)=v_j =\frac{2}{\Theta'(\alpha_j)}\Big(\alpha_j^{\ell-1}, \ldots, (-1)^{r-1}\alpha_j^{\ell-r}, \ldots, (-1)^{\ell-1} \Big)$, it follows from \cite[Prop.~9]{ACGT} that at the boundary of $\Pol_{\ell}$,  ${\bf H}_{\sigma}:={\rm Hess}_{\sigma}(u)^{-1}$ satisfies 
\[{\bf H}(\sigma)(dL_j, \cdot)=0, \qquad d{\bf H}_{\sigma}(dL_j, dL_j)=2dL_j, \qquad j=1, \ldots, \ell. \]
This in turn yields (see \cite[Thm.2]{Abreu1}) that
\[u(\sigma) = \frac{1}{2}\sum_{j=1}^{\ell} L_j(\sigma) \log L_j(\sigma) + \textrm{smooth}.\]
We then estimate
%\[
%\begin{split}
%U(\mu) &= \sum_{j=1}^{\ell} \left( \frac{1}{2}L_j(\sigma) \log L_j(\sigma) + L_j(\sigma)\check{u}^j(\check{\mu}^j)\right) + \mathrm{smooth} \\
%&=\frac{1}{2}\sum_{j=1}^{\ell}\left(L_j(\sigma) \log L_j(\sigma) + \sum_{k=0}^{d_j} L_j(\sigma)\check{L}^j_k(\check{\mu}^j)\log\check{L}^j_k(\check{\mu}^j) \right) + \mathrm{smooth} \\
%&= \frac{1}{2}\left(\sum_{j=1}^{\ell}\sum_{k=0}^{d_j} \left(L_j(\sigma)\check{L}^j_k(\check{\mu}^j)\right)\log \left(L_j(\sigma)\check{L}^j_k(\check{\mu}^j)\right)\right)  + \mathrm{smooth},
%\end{split}\]
\[
\begin{split}
U(\mu)\,\, &\sim\, \sum_{j=1}^{\ell} \left( \frac{1}{2}L_j(\sigma) \log L_j(\sigma) + L_j(\sigma)\check{u}^j(\check{\mu}^j)\right) \\
&\sim\, \frac{1}{2}\sum_{j=1}^{\ell}\left(L_j(\sigma) \log L_j(\sigma) + \sum_{k=0}^{d_j} L_j(\sigma)\check{L}^j_k(\check{\mu}^j)\log\check{L}^j_k(\check{\mu}^j) \right)  \\
&\sim\,  \frac{1}{2}\left(\sum_{j=1}^{\ell}\sum_{k=0}^{d_j} \left(L_j(\sigma)\check{L}^j_k(\check{\mu}^j)\right)\log \left(L_j(\sigma)\check{L}^j_k(\check{\mu}^j)\right)\right) ,
\end{split}\]
where $\sim$ denotes equality up to the addition of a function extending smoothly over the boundary of $\Pol_n$, and to get the last line we have used that $\sum_{k=0}^{d_j}\check{L}^j_k(\check{\mu}^j)=1$.

Similarly,  
\begin{equation}\label{det}
 \det\left({\rm Hess}(U)\right)  \left(\prod_{j=1}^{\ell} \prod_{k=0}^{d_j} L_j(\sigma)\check{L}_k^j(\check{\mu}^j)\right)\end{equation}
extends as a smooth positive function over the boundary of $\Pol_n$.  We conclude by \cite[Thm.2]{Abreu1} that  $(g, \omega)$ extends to a smooth $\T^n$-invariant K\"ahler metric on $\R^{2n}$.

In order to recast the soliton vector field $X$ of $(g, \omega)$ (which corresponds to the Killing potential $a(\xi_1 + \cdots + \xi_\ell)=a\sigma_1$ in the description \eqref{k-order-ell} of the metric), we consider $\R^{2n}\cong \C^n = \prod_{j=1}^{\ell} \C^{d_j+1}$ as a smooth symplectic toric manifold with labeled Delzant polyhedron given by the cone $\Pol_n \in \R^n$ with labels 
$L_j(\sigma)\check{L}^j_k(\check{\mu}^j_k), \, j=1, \ldots, \ell, \, k=0, \ldots, d_j$ (which are  affine-linear functions in $\mu$, see \eqref{mu}). This  identifies $\omega$ with the standard symplectic structure on $\R^{2n}$.
With respect to these labels, we have (see \eqref{check-label})
\[L_j(\sigma) = \sum_{k=0}^{d_j}L_j(\sigma)\check{L}^j_k(\check{\mu}^j_k)\]
and, using \eqref{fiber-label} and the Vandermonde identity (see \cite[App.~B3]{ACG}
\begin{equation}\label{Vandermonde}
\sum_{j=1}^\ell  \frac{\alpha_j^{\ell-r}}{\Delta(\alpha_j)} = \delta_{r1}, \qquad r=1, \ldots, \ell, \end{equation}
(where we recall, $\Delta(\alpha_j):= \prod_{k\neq j}(\alpha_j-\alpha_k)$), we find 
%\begin{equation*}
%\begin{split} &\sum_{j=1}^{\ell} \left(\frac{q(\alpha_j)}{2(d_j+1)\Delta(\alpha_j)}\right)\left(\sum_{k=0}^{d_j} L_j(\sigma)\check{L}^j_k(\check{\mu}^j_k)\right) =\sum_{j=1}^{\ell} \left(\frac{q(\alpha_j)}{2(d_j+1)\Delta(\alpha_j)}\right)L_j(\sigma) \\
%& = -\sum_{j=1}^{\ell}\left(\sum_{r=0}^{\ell} (-1)^r \frac{\alpha_j^{\ell-r}}{\Delta(\alpha_j)} \sigma_r\right)= \sigma_1.
%\end{split}
%\end{equation*}
%\begin{equation*}
%\textcolor{red}{\begin{split} &\sum_{j=1}^{\ell} \left(\frac{q(\alpha_j)}{2(d_j+1)\Delta(\alpha_j)}\right)\left(\sum_{k=0}^{d_j} L_j(\sigma)\check{L}^j_k(\check{\mu}^j_k)\right) \\
%& \hspace{2in} =\sum_{j=1}^{\ell} \left(\frac{q(\alpha_j)}{2(d_j+1)\Delta(\alpha_j)}\right)L_j(\sigma) \\
%& \hspace{2in} = -\sum_{j=1}^{\ell}\left(\sum_{r=0}^{\ell} (-1)^r \frac{\alpha_j^{\ell-r}}{\Delta(\alpha_j)} \sigma_r\right)= \sigma_1.
%\end{split}}
%\end{equation*}
\begin{equation*}
\begin{split} &\sum_{j=1}^{\ell} \left(\frac{q(\alpha_j)}{2(d_j+1)\Delta(\alpha_j)}\right)\left(\sum_{k=0}^{d_j} L_j(\sigma)\check{L}^j_k(\check{\mu}^j_k)\right) \\
& =\sum_{j=1}^{\ell} \left(\frac{q(\alpha_j)}{2(d_j+1)\Delta(\alpha_j)}\right)L_j(\sigma)  = -\sum_{j=1}^{\ell}\left(\sum_{r=0}^{\ell} (-1)^r \frac{\alpha_j^{\ell-r}}{\Delta(\alpha_j)} \sigma_r\right)= \sigma_1.
\end{split}
\end{equation*}
This yields the expression of the soliton vector field in the statement of the lemma.

When $a=0$, the corresponding function $F$  (see \eqref{F}) is a degree $n$ polynomial which satisfies $F'(t) = q(t)p_c(t)$. As $F(\alpha_j)=0$ and $p_c(t)=\prod_{j=1}^{\ell} (t-\alpha_j)^{d_j}$, we see that $F(t)$ is a multiple of $\prod_{j=1}^{\ell}(t-\alpha_j)^{d_j+1}$. The  resulting K\"ahler structure is flat according to \cite[Prop.~17]{ACG}.

Finally, as the metric $(g, \omega)$ on $M^0$ admits a non-trivial hamiltonian $2$-form of order $\ell \ge 1$  (see \cite[Thm.1]{ACG}) and $F_j(t)=F(t)$ is not a polynomial when $a\neq 0$ (by the argument invoked above),  Theorem~\ref{thm:CMR} tells us that the space of all hamiltonian $2$-forms on $(M^0, g, \omega)$ is of real dimension $2$. In particular, $(M^0, g, \omega)$ cannot be a (local) product of K\"ahler manifolds as if it were, the metric would admit additional one parameter family of non-trivial hamiltonian $2$-forms of order $0$. \end{proof}

\begin{rem}\label{Cao-vf}
Since $q(\alpha_j)$ have alternating signs,  one sees  from Lemma~\ref{irreducible}  that the soliton vector field is of the form $a\left(\sum_{j=1}^\ell \lambda_j X_j\right)$, for $\lambda_j > 0$.   Lemma \ref{frlemma} tells us that we should impose $a\ge 0$ if we want the corresponding complex structure $J$ be isomorphic to $J_0$.
\end{rem}

\begin{lemma}\label{l:extension} Given integers $\ell\ge 1, d_1, \ldots, d_{\ell}\ge 0$ with $n=\ell + \sum_{j=1}^{\ell}d_j$, the above construction defines two $(\ell-1)$-dimensional continuous families of non-isometric up to scale,   $\T^n$-invariant non-flat steady GKRS on $\R^{2n}$, compatible with the standard symplectic structure.
\end{lemma}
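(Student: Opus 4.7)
My approach is to set up the construction as a moduli problem, identify the essential parameters after quotienting by the natural equivalences, and then invoke the theory of hamiltonian $2$-forms to establish non-isometry.

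First, I would observe that the construction of Section~\ref{s:type-C^n} depends on the $(\ell+2)$ parameters $(\alpha_1<\cdots<\alpha_\ell, a, c)$, where $c$ is the overall normalization of the polynomial $q(t)$. By Conclusion~\ref{construction} and Lemma~\ref{irreducible}, every such choice with $a\neq 0$ produces a smooth $\T^n$-invariant non-flat steady GKRS $(g,J,\omega)$ on $\R^{2n}$, compatible with the standard symplectic form. The natural equivalences among these parameters are threefold: (i) the translation $\alpha_j\to\alpha_j+s$, combined with a compensating affine shift of the momenta $\sigma_r$, yields an isometric GKRS; (ii) the dilation $t\to\lambda t$ (which rescales the $\alpha_j$, $a$, and the torus lattice in a concerted fashion) realizes a metric homothety; and (iii) the rescaling $c\to\nu c$, which can be compensated by a change of basis of the torus lattice $\Gamma_v$, also amounts to a metric rescaling. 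Quotienting the $(\ell+2)$-parameter space by these three real symmetries, while identifying metrics ``up to scale'' as in the statement, should leave exactly $\ell-1$ essential parameters.

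The second step, showing non-isometry, is the most delicate. The key observation is that since $a\neq 0$, the function $F(t)=P(t)+c\,e^{-2at}$ from \eqref{F} is \emph{not} a polynomial. By the contrapositive of Theorem~\ref{thm:CMR}, this forces the mobility of the K\"ahler structure to be $D(J,g)=2$, so every hamiltonian $2$-form on $(M,g,J,\omega)$ is an affine combination of $\omega$ and the form $\phi$ from \eqref{eq:hamiltonian-2-form}. Consequently the unordered set of constant eigenvalues $\{\alpha_1,\ldots,\alpha_\ell\}$ of $\phi\circ\omega^{-1}$ is an intrinsic invariant of the K\"ahler metric, determined only up to a common affine transformation $\alpha_j\to\lambda\alpha_j+s$ arising from the two-dimensional ambiguity in choosing $\phi$. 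Identifying the translation $s$ with symmetry (i), and the scale $\lambda$ with the homothety absorbing symmetries (ii) and (iii), one concludes that distinct orbits of the parameters under the equivalences yield non-isometric GKRS's up to scale.

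The main obstacle I anticipate is the careful bookkeeping of how the three rescaling symmetries interact: the $t$-dilation, the $c$-rescaling, and the ambient metric homothety are tightly interrelated through the defining formulae \eqref{F}, \eqref{vj-def}, and \eqref{fiber-label}, and one must verify that after quotienting by the correct combination one is left with precisely $\ell-1$ parameters, rather than $\ell$ or $\ell-2$. Once this parametric reduction is carried out, the non-isometry conclusion follows cleanly from the mobility argument, since the $\alpha_j$'s become fully intrinsic invariants after the remaining scaling freedom is absorbed into the overall metric homothety.
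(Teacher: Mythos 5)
Your setup and parameter count agree with the paper's proof, and the key tool (the mobility bound of Theorem~\ref{thm:CMR} forcing $D(J,g)=2$, so that the hamiltonian $2$-form $\phi$ is intrinsic up to $\phi\mapsto p\phi+q\omega$ and the constant eigenvalues $\alpha_j$ are determined up to a common affine map) is exactly the one the paper uses. But your non-isometry argument has a genuine gap: it only pins down the $\alpha_j$'s, and after your normalizations these account for just $\ell-2$ of the $\ell-1$ essential parameters. The remaining parameter is the soliton constant $a$, and nothing in your argument shows that two metrics built from the same normalized $(\alpha_1,\dots,\alpha_\ell)$ but different $a\neq\tilde a$ are non-isometric --- the eigenvalue data of $\phi\circ\omega^{-1}$ is identical for them, so the mobility argument alone cannot separate them. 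The paper closes this by one further step: under a putative isometry the hamiltonian forms are identified up to an affine combination, hence so are the traces $\sigma_1=\tr(\phi\circ\omega^{-1})$ and $\tilde\sigma_1$; since the soliton equation gives $\rho=a\,dd^c\sigma_1$ and $\tilde\rho=\tilde a\,dd^c\tilde\sigma_1$, and the Ricci forms must correspond, one gets $a=\tilde a$. Without some such appeal to the Ricci form (or another invariant depending on $a$), your conclusion that ``distinct orbits yield non-isometric GKRS's'' does not follow.

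Two smaller omissions: you do not explain why the construction yields \emph{two} $(\ell-1)$-dimensional families, which in the paper correspond to the two connected components $a>0$ and $a<0$ of the normalized parameter space (each deforming the flat metric at $a=0$); and the case $\ell=1$ needs separate handling, since the normalization $\alpha_1=0$, $\alpha_2=1$ is unavailable and the paper instead invokes Cao's uniqueness theorem together with a residual rescaling that reduces $a$ to $\pm1$ or $0$.
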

\begin{proof}  Given $\ell\ge 1, d_1, \ldots, d_{\ell}\ge 0$ as in the lemma, the GKRS  $(g, \omega)$ on $M^0$ are constructed from the data of real numbers
\[ a \in \R, \qquad \alpha_1<\alpha_2<\cdots <\alpha_{\ell}, \qquad \lambda>0,\]
where $\lambda$ is a free multiplicative constant before $q(t)$ (and hence $F(t)$) as determined in the construction.

\bigskip
Let us first suppose $\ell \ge 2$. Any K\"ahler metric $(g, \omega)$  of the form \eqref{k-order-ell} admits a hamiltonian $(1,1)$-form $\phi$ of order $\ell$, given explicitly on $M^0$ by \eqref{eq:hamiltonian-2-form}; furthermore, $(\xi_1, \ldots, \xi_{\ell}, \eta_1, \ldots, \eta_N)$ are the eigenvalues of $\phi\circ \omega^{-1}$. As we can replace the hamiltonian $2$-form $\phi$ by $p\phi + q \omega$, we can change the data $(\xi_1, \ldots, \xi_{\ell}, \eta_1, \ldots, \eta_N)$  by a simultaneous affine transformation without changing the resulting K\"ahler metric $(g, \omega)$. For our specific construction (where $\eta_a=\alpha_j$), this  allows us to fix $\alpha_1=0, \alpha_2=1$. Furthermore, changing $F(t)$ by $\lambda F(t)$ results in a scale by a factor $\frac{1}{\lambda}$ of the labels $L_j(\sigma)\check{L}^j_k(\check{\mu}^j)$ of the Delzant cone $\Pol_n$ and to the rescaled K\"ahler metric $(\lambda g, \lambda \omega)$ (see e.g. formula \eqref{symplectic-potential} for the symplectic potential of the metric). We can then normalize this constant too by assuming $q(\alpha_1)=(-1)^{\ell}$. We are thus left with $(\ell-1)$ effective parameters $(a, \alpha_3, \ldots, \alpha_{\ell})$. 

If $a=0$, we already observed in Lemma~\ref{irreducible} that the resulting K\"ahler metric is flat, so we get isometric metrics for all values of $(\alpha_3, \ldots, \alpha_{\ell})$.

Suppose now $a\neq 0$. We claim that two K\"ahler metrics  $(g, \omega)$ and $(\tilde g, \tilde \omega)$ corresponding to data 
\[ (a, 0, 1, \alpha_3, \ldots, \alpha_{\ell}) \neq (\tilde a, 0, 1, \tilde \alpha_3, \ldots, \tilde \alpha_{\ell}), \qquad a\neq 0 \]
are not even locally isometric up to scale. Suppose for contradiction that $(g, \omega)$ and $(\tilde g, \tilde \omega)$ where isometric up to scale over some open subsets.  It  follows from Lemma~\ref{irreducible} that $\tilde a \neq 0$, and thus the space of hamiltonian $2$-forms associated to each $(g, \omega)$ and $(\tilde g, \tilde \omega)$ is $2$-dimensional by  Theorem~\ref{thm:CMR}. Identifying $(\tilde g, \tilde \omega)$ with $(\lambda g, \lambda  \omega)$ under the local isometry, it follows that the corresponding hamiltonian $2$-form $\tilde \phi$ is identified with $p\phi + q\omega, \, p\neq 0$.  Thus the eigenvectors of $\tilde \phi \circ \tilde \omega^{-1}$ and $\phi \circ \omega^{-1}$ are related by simultaneous affine transformation. As $\alpha_1=0=\tilde \alpha_1$ and $\alpha_2 =1=\tilde \alpha_2$ by our normalization, we conclude that $\alpha_3=\tilde \alpha_3, \ldots, \alpha_{\ell}= \tilde \alpha_{\ell}$ (and thus $\tilde \phi$ is sent to $\lambda \phi$ under the local isometry).  As $\sigma_1 = \tr \left(\phi \circ \omega^{-1}\right)$ and $\tilde \sigma_1 =\tr \left(\tilde \phi \circ \tilde \omega^{-1}\right)$ are identified under the isometry, and as (by the soliton condition)  the corresponding Ricci tensors are $\rho= a (dd^c \sigma_1)$ and $\tilde \rho = \tilde a (dd^c \tilde \sigma_1)$, we conclude $a=\tilde a$, a contradiction. 

We thus obtain two families respectively parametrized by 
\[\{ (a, \alpha_2, \ldots, \alpha_{\ell})\, | \, a<0, \, 1<\alpha_2<\cdots<\alpha_{\ell}\} \]
and
\[\{ (a, \alpha_2, \ldots, \alpha_{\ell})\, | \, a>0, \, 1<\alpha_2<\cdots<\alpha_{\ell}\}, \]
each obtained by deforming the flat metric at $a=0$.

\bigskip
Now consider the case $\ell=1$. For $a\ge 0$ the metrics in this case are Cao solitons on $\C^n$ and have a ${\rm U}(n)$ symmetry by Remark~\ref{symmetry} below. The uniqueness then follows by \cite{Cao}.  To cover all values of $a$ we argue as in the case $\ell \ge 2$. First, keeping the metric $(g, \omega)$ fixed and changing the corresponding hamiltonian $2$-form, we can set $\alpha_1=0$ as in Section~\ref{s:Cao}. This stills allows us to rescale $\phi$ by a factor $\lambda\neq 0$, which leads to changing  in the ansatz $\xi$ with $\lambda\xi$,  $\Theta_{0,a}(\xi)$ with $\frac{1}{\lambda^2}\Theta_{0,a}(\lambda \xi)$, and rescaling the Fubini--Study metric $\check{\omega}_{\rm FS}$ by a factor of $\frac{1}{\lambda}$ (compare with Remark~\ref{r:homothety}). Thus, we can further fix $q(t)$ (or equivalently the positive constant $\lambda$) in this case without changing the K\"ahler metric $(g, \omega)$. The only constant left is $a$ which can be made equal to $-1, 0, 1$ by  a further scaling factor  of $(g, \omega)$. We then get two non-flat steady GKRS metrics on $\R^{2n},$ one of which ($a=1$) corresponds to Cao's complete GKRS on $\C^n$ and the other $(a=-1)$ is incomplete.
\end{proof}

\begin{lemma}\label{l:completion}  If $a\ge 0$, the steady GKRS solitons on $\R^{2n}$ defined as above are complete.
\end{lemma}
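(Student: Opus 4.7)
The plan is to exhibit on $\R^{2n}$ a smooth proper exhaustion function $\rho$ with uniformly bounded $g$-gradient; completeness then follows by a standard Lipschitz argument: any $g$-Cauchy sequence $\{p_n\}$ satisfies $|\rho(p_n)-\rho(p_m)|\le C\,d^g(p_n,p_m)$, so $\{\rho(p_n)\}$ is Cauchy in $\R$ and thus $\{p_n\}$ is contained in a sub-level set of $\rho$, which is compact by properness. The case $a=0$ is handled separately and immediately, since Lemma~\ref{irreducible} identifies $(g,\omega)$ with the flat K\"ahler structure on $\R^{2n}\cong\C^n$, which is complete. We therefore assume $a>0$ throughout.

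For the exhaustion function I take $\rho:=\sigma_1=\xi_1+\cdots+\xi_\ell$, the first component of the momentum map \eqref{mu}; this is a globally smooth $\T^n$-invariant function on $\R^{2n}$. Properness is expected to be straightforward: on $\mathring{\rm D}$ the variables $\xi_1,\dots,\xi_{\ell-1}$ are confined to the bounded intervals $(\alpha_j,\alpha_{j+1})$, so the bound $\sigma_1\le R$ forces $\xi_\ell$ to be bounded as well; consequently all coordinates $\sigma_r$ and all values $L_j(\sigma)\check\mu_k^j$ appearing in the momentum map are bounded, the image $\mu(\sigma_1^{-1}([-R,R]))$ is bounded inside the Delzant cone $\Pol_n$, and its preimage is compact by properness of $\mu$. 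For the gradient, the diagonal form of the $\xi$-sector of \eqref{k-order-ell} yields on $M^0$
\[
|\nabla\sigma_1|_g^2=\sum_{j=1}^\ell\frac{\Theta(\xi_j)}{\Delta(\xi_j)},\qquad \Theta(t):=\frac{F(t)}{p_c(t)};
\]
since $\sigma_1$ and $g$ are smooth on all of $\R^{2n}$, the function $|\nabla\sigma_1|_g^2$ extends smoothly there, and it suffices to bound it on a neighbourhood of infinity, i.e.\ as $\xi_\ell\to+\infty$.

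The core analytic step is the large-$t$ asymptotic $\Theta(t)=q(t)/(2a)+O(t^{\ell-2})$, which I will extract either from the ODE \eqref{ODE-Theta} (for $a>0$ the polynomial particular solution dominates the exponentially decaying homogeneous part) or directly from the explicit formula \eqref{general-solution-ell}. Since $q$ has degree exactly $\ell-1$ for $a\ne 0$, with leading coefficient $c_q>0$---its positivity follows from $q(\alpha_\ell)>0$ in \eqref{Cao-signs-final} together with the fact that all $\ell-1$ roots of $q$ lie in $(\alpha_1,\alpha_\ell)$---one obtains $\Theta(\xi_\ell)/\Delta(\xi_\ell)\to c_q/(2a)$. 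For each $j<\ell$, the factor $(\xi_j-\xi_\ell)\to-\infty$ in $\Delta(\xi_j)$ forces $\Theta(\xi_j)/\Delta(\xi_j)\to 0$, since $\Theta(\xi_j)$ stays bounded on the compact interval $[\alpha_j,\alpha_{j+1}]$. Hence $|\nabla\sigma_1|_g^2$ is bounded near infinity, and globally by smoothness. The main obstacle will be this asymptotic analysis: one must verify that the polynomial parts of $F(t)$ (asymptotically of degree $n-1$) and $p_c(t)$ (of degree $n-\ell$) cancel precisely to leave $\Theta(t)$ of effective degree $\ell-1$, exactly matching the growth $\Delta(\xi_\ell)\sim\xi_\ell^{\ell-1}$; the rest is routine bookkeeping.
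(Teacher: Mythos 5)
Your proposal is correct and is in substance the same argument as the paper's: the paper also reduces completeness to properness of the momentum map (hence of $\xi_\ell$, equivalently of $\sigma_1$) together with the lower bound $\frac{p_c(\xi_\ell)\Delta(\xi_\ell)}{F(\xi_\ell)}\ge C^2>0$ near infinity coming from $F(t)=P(t)+ce^{-2at}=O(t^{n-1})$, only packaged as the length estimate $d^g(p_0,p)\ge C\xi_\ell(p)+C'$ along curves rather than as your ``proper exhaustion with bounded gradient'' criterion. The one point to tighten is your claim that each term $\Theta(\xi_j)/\Delta(\xi_j)$, $j<\ell$, tends to zero ``since $\Theta(\xi_j)$ stays bounded'': near a collision $\xi_j=\xi_{j+1}=\alpha_{j+1}$ the factor $\Delta(\xi_j)$ also vanishes, and one should instead use that $\Theta(t)=\prod_i(t-\alpha_i)\,G(t)$ vanishes at each $\alpha_i$ and that $|\xi_j-\xi_k|\ge|\xi_j-\alpha_k|$ for $k>j$ (resp.\ $\ge|\xi_j-\alpha_{k+1}|$ for $k<j$), which bounds each ratio by a constant times $|\xi_j-\alpha_\ell|/|\xi_j-\xi_\ell|\to 0$.
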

\begin{proof}  When $a=0$ the metric $g$ is the flat metric on $\R^{2n}$. We thus assume $a>0$. By the Hopf--Rinow theorem,  $(\R^{2n}, g)$ is complete if and only if bounded subsets  of $\R^{2n}$ with respect to the distance $d^g$ are bounded in the usual topology.   We are going to show that the latter condition holds,  provided that $F(t) = O(t^{n-1})$ where $F(t)$ is the function \eqref{F}. (Notice that in our case, $F(t) = P(t) + ce^{-2at}$ for a polynomial  $P(t)$ of degree at most $(n-1)$.)

The variables $(\xi_1, \ldots, \xi_{\ell})$ in the description \eqref{k-order-ell} correspond to the (non-constant) eigenvalues of a hamiltonian $(1,1)$-form of order  $\ell$ with respect to $(g, \omega)$, see \cite[Thm.1]{ACG}. This form is defined on $M^0$ but also extends to $\C^n$ because the K\"ahler structure $(g, \omega)$ extends and $\C^{n}\setminus M^0$ is of real co-dimension at most $2$.  (We used here that hamiltonian $2$-forms correspond to parallel sections of an extended tractor bundle, see \cite{ACG}.) Thus, $(\xi_1, \ldots, \xi_{\ell})$ extend to smooth functions on $\C^n$. As we have used the standard Delzant cone $\Pol_n$ of $\C^n$ to extend $\omega$, the momentum map $\mu$ is proper. From \eqref{mu}, it follows that $\sigma:=(\sigma_1, \ldots, \sigma_{\ell})$ is proper and hence also $(\xi_1, \ldots, \xi_{\ell})$. As $(\xi_1, \xi_2, \ldots, \xi_{\ell-1})$ are bounded, to establish the completeness of $g$ it is enough to prove that on any $d^g$-bounded set, $\xi_{\ell}$ is bounded.

We take a curve $\gamma(t), t\in [0, \infty)$ in $\C^n$ issued from a point $p_0$  and write $\xi_i(t)= \xi_i(\gamma(t))$ for the corresponding functions. We assume (without loss for our argument) that $\xi_{\ell}(t)$ goes to $+\infty$ and take $s>>0$ so that $\xi_{\ell}(s)>0$.  From the form \eqref{k-order-ell} of $g$ and using $F(t)=O(t^{n-1})$, we obtain for the length of $\gamma(t), t\in [0, s]$:
\[L^g(\gamma(t)) \ge \int_0^{s} \sqrt{\frac{p_c(\xi_{\ell}(t))\prod_{j\neq 1}(\xi_{\ell}(t)-\alpha_{j})}{F(\xi_{\ell}(t))}} |\dot \xi_{\ell}(t)|dt \ge C \xi_{\ell}(s) + C',  \]
where $C>0, C'$ are uniform constants.
It thus follows that for $p$ outside a compact set, 
\begin{equation}\label{distance}
d^g(p_0, p) \ge C\xi_{\ell}(p) + C', 
\end{equation}
which yields the result.
\end{proof}

\begin{lemma}\label{l:biholomorphic} If $a\ge 0$, the complex structure of the GKRS on $\R^{2n}$ defined as above is equivariantly biholomorphic to the standard complex structure on $\C^n$. In particular, we have an $(\ell-1)$-dimensional family of complete non-flat steady GKRS on $\C^n$. 
\end{lemma}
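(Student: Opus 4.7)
The plan is to invoke Lemma~\ref{criterion}: once completeness of $g$ on $\R^{2n}$ and the linear growth estimate $\|X_v\|_g \le C d^g(p_0, \cdot) + C$ for every fundamental vector field $X_v$, $v \in \tor$, are both in hand, the lemma produces a $\T^n$-equivariant biholomorphism from $(\R^{2n}, J)$ to $(\R^{2n}, J_0)$. Because the Delzant cone $(\Pol_n, \Lambda)$ extending the metric was identified in Lemma~\ref{irreducible} with the standard cone of $(\C^n, \omega_0)$, the reference complex structure $J_0$ is the standard one, so this is the desired biholomorphism $(\R^{2n}, J) \cong \C^n$. Completeness is Lemma~\ref{l:completion}, so only the linear growth condition is in question. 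The flat case $a=0$ is immediate from Lemma~\ref{irreducible}, so I assume $a > 0$.

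For the basis of $\tor$ I would take the $\ell$ fiber generators $K_1, \ldots, K_\ell$ (hamiltonian vector fields of $\sigma_1, \ldots, \sigma_\ell$, dual to $\theta_1, \ldots, \theta_\ell$) together with lifts of the Killing generators of the base torus $\check{\T}^{n-\ell} = \prod_{j=1}^\ell \check{\T}^{d_j}$ acting on $B = \prod_j \PP^{d_j}$. The distance bound \eqref{distance} established inside the proof of Lemma~\ref{l:completion} reduces the task to controlling $\|X_v\|_g$ in the regime $\xi_\ell \to \infty$, with $\xi_1, \ldots, \xi_{\ell-1}$ staying in the compact intervals $[\alpha_i, \alpha_{i+1}]$; outside that regime, the properness of $p\mapsto \xi_\ell(p)$ on $\R^{2n}$ yields a uniform bound on any continuous $\|X_v\|_g$. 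In this regime I would record the asymptotics $F(\xi_\ell) = O(\xi_\ell^{n-1})$ (using $F = P + c e^{-2a t}$ with $\deg P \le n-1$), $p_c(\xi_\ell) \sim \xi_\ell^{n-\ell}$, $\Delta(\xi_\ell) \sim \xi_\ell^{\ell-1}$, $p_{nc}(\alpha_j) \sim \xi_\ell$, and, for $j<\ell$, $F(\xi_j), p_c(\xi_j)$ bounded while $|\Delta(\xi_j)| \sim \xi_\ell$.

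For the base generators, \eqref{k-order-ell} gives $\|\check X\|_g^2 = \sum_j \varepsilon_j p_{nc}(\alpha_j)\,\check{g}_j(\check{X}_j, \check{X}_j) \le C\,\xi_\ell$ since each $\check g_j$ is Fubini--Study on the compact $\PP^{d_j}$. For the fiber generators, rather than invert the $\ell\times\ell$ $\theta$-block of $g$, I would use the K\"ahler identity $\|K_r\|_g^2 = g^{-1}(d\sigma_r, d\sigma_r)$: since the $d\xi_j$ and $\theta$-distributions in \eqref{k-order-ell} are $g$-orthogonal with $g^{-1}(d\xi_j, d\xi_k) = \delta_{jk} F(\xi_j)/(p_c(\xi_j)\Delta(\xi_j))$, and $d\sigma_r = \sum_j \sigma_{r-1}(\hat\xi_j)\,d\xi_j$, one obtains
\[ \|K_r\|_g^2 = \sum_{j=1}^\ell \frac{F(\xi_j)\,\sigma_{r-1}(\hat\xi_j)^2}{p_c(\xi_j)\,\Delta(\xi_j)}. \]
The $j=\ell$ summand is $O(1)$, and each $j<\ell$ summand is $O(\xi_\ell)$ (the factor $\sigma_{r-1}(\hat\xi_j)$ contributes at most $O(\xi_\ell)$ since only $\xi_\ell$ is unbounded, while the denominator is $O(\xi_\ell)$), so $\|K_r\|_g^2 \le C(\xi_\ell + 1)$.

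Combining these estimates with \eqref{distance} yields $\|X_v\|_g \le C \sqrt{d^g(p_0, \cdot) + 1} + C''$, which is stronger than \eqref{linear}. Lemma~\ref{criterion} then delivers the biholomorphism, and the ``in particular'' clause is assembled from Lemmas~\ref{l:extension}, \ref{l:completion}, and \ref{irreducible}. The main pitfall I would expect is being tempted to diagonalize the $\theta$-block of $g$ to compute $\|K_r\|_g$ head-on, since that route produces an unwieldy Vandermonde computation; the hamiltonian identity above dispenses with that inversion entirely and makes the asymptotic estimate transparent.
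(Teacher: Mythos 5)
Your overall strategy is the paper's: reduce to Lemma~\ref{criterion} via the completeness from Lemma~\ref{l:completion} and the estimate \eqref{distance}, and bound $\|K_r\|_g^2=\sum_{j}F(\xi_j)\sigma_{r-1}(\hat\xi_j)^2/\bigl(p_c(\xi_j)\Delta(\xi_j)\bigr)$ by a power of $\xi_\ell$. (Incidentally, no inversion of the $\theta$-block is needed for this formula: $K_r$ is dual to $\theta_r$, so one simply evaluates the last sum in \eqref{k-order-ell} on $K_r$; your route through $g^{-1}(d\sigma_r,d\sigma_r)$ gives the same expression, so the ``pitfall'' you flag is not there.) The genuine gap is in your bound on the summands with $j<\ell$. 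You claim each is $O(\xi_\ell)$ because the numerator is $O(\xi_\ell^2)$ while ``the denominator is $O(\xi_\ell)$'', i.e.\ $|p_c(\xi_j)\Delta(\xi_j)|$ is bounded below by $c\,\xi_\ell$. That lower bound fails: $p_c(\xi_j)=\prod_k(\xi_j-\alpha_k)^{d_k}$ tends to $0$ as $\xi_j\to\alpha_j$ or $\alpha_{j+1}$ whenever the corresponding $d>0$, and for $\ell\ge 3$ the factors $(\xi_j-\xi_i)$ of $\Delta(\xi_j)$ with $i\neq j,\ell$ tend to $0$ when two adjacent eigenvalues collide at a common endpoint $\alpha_{j+1}$. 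The summand is in fact controlled, but only via cancellations with the numerator: one writes $F(\xi_j)/p_c(\xi_j)=\prod_k(\xi_j-\alpha_k)\,G(\xi_j)$ with $G$ smooth (each $\alpha_k$ is a zero of $F$ of order $d_k+1$), and absorbs the small factors of $\Delta(\xi_j)$ using the interlacing inequalities $|\xi_j-\alpha_{i+1}|\le|\xi_j-\xi_i|$ for $i<j$ and $|\xi_j-\alpha_i|\le|\xi_j-\xi_i|$ for $i>j$. This is precisely what the paper's manipulation through $G$ and the Vandermonde identity \eqref{Vandermonde} accomplishes; it yields $\|K_r\|_g\le C\,\xi_\ell$, which still satisfies \eqref{linear} via \eqref{distance}.

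A second, smaller omission: the lifted base generators are $X^j_k=(\check X^j_k)^H+\check\mu^j_k\,(\cdot)$ as in \eqref{lifted}, so $\|X^j_k\|_g^2$ is not just the horizontal term $\varepsilon_j p_{nc}(\alpha_j)\check g_j(\check X^j_k,\check X^j_k)$; there is a vertical contribution. Since $\check\mu^j_k$ ranges in a compact simplex and the vertical part is a fixed linear combination of the $K_r$, this is repaired by the triangle inequality once the $K_r$ are controlled, but it must be accounted for. With these two repairs your argument closes and coincides with the paper's proof.
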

\begin{proof} Once again, the case $a=0$ is clear as the K\"ahler structure is flat. We assume $a>0$.  By virtue of Proposition~\ref{biholomorphic} we want to show that if $K_r, \, r=1, \ldots, \ell$ are the Killing vector fields of $(g, \omega)$ with Killing potentials $\sigma_r$ and 
\begin{equation}\label{lifted}
X^j_k= \left(\check{X}^j_k\right)^H + \check{\mu}^j_k dL_j
\end{equation}
are the Killing vector fields coming from the lifted torus action on $\PP^{d_j}$, then $JK_r$ and $JX^j_k$ are complete vector fields on $M$.

To this end,  we shall use Lemma~\ref{criterion}. Let us consider first $JK_r$. By \eqref{k-order-ell}, we compute
\[
\begin{split}
g(JK_r, JK_r) &= g(K_r, K_r) = \sum_{j=1}^{\ell}\left(\frac{F(\xi_j)}{p_c(\xi_j)\Delta(\xi_j)}\sigma^2_{r-1}(\hat \xi_j)\right) \\
                      &=\sum_{j=1}^{\ell}\left(\frac{\prod_{k=1}^{\ell}(\xi_j - \alpha_k)^{d_j+1} G(\xi_j)}{p_c(\xi_j)\Delta(\xi_j)}\sigma^2_{r-1}(\hat \xi_j)\right) \\
                      &= \sum_{j=1}^{\ell}\left(\frac{\prod_{k=1}^{\ell}(\xi_j - \alpha_k)G(\xi_j)}{\Delta(\xi_j)}\sigma^2_{r-1}(\hat \xi_j)\right),
\end{split}\]
where we have used that $F(t)$ is a real-analytic function with zeros $\alpha_j$ of multiplicity $d_j+1$, so that $G(t):= \frac{F(t)}{\prod_{k=1}^\ell(t-\alpha_k)^{d_j+1}}$ is a smooth function which is $O(\frac{1}{t})$ at infinity (recall that $F(t)= O(t^{n-1})$).
We can thus bound with uniform positive constants $C, C'$:
\[
\begin{split}
g(JK_r, JK_r) &= \sum_{j=1}^{\ell}\left(\frac{\prod_{k=1}^{\ell}(\xi_j - \alpha_k)G(\xi_j)}{\Delta(\xi_j)}\sigma^2_{r-1}(\hat \xi_j)\right) \\
                      &\le {C \xi_{\ell}}\left(\sum_{j=1}^{\ell}\frac{\prod_{k=1}^{\ell}(\xi_j - \alpha_k)}{\Delta(\xi_j)}\right)\\
                      &=C\xi_{\ell}\left(\sigma_1 -\sum_{j=1}^{\ell}\alpha_j\right) \le C' \xi_{\ell}^2,
\end{split}\]
where to passing to the last line we have used the Vandermonde identities \eqref{Vandermonde} with respect to $(\xi_1, \ldots, \xi_{\ell})$ as well as the identity (see \cite[(98)]{ACG}) $\sum_{j=1}^\ell \frac{\xi_j^{\ell}}{\Delta(\xi_j)} = \sigma_1$.
We thus conclude that
\[ ||JK_r||_g \le C'|\xi_{\ell}|, \]
which, together with \eqref{distance}  and Lemma \ref{criterion},   yields that $JK_r$ is a complete vector field.

Similarly,  for the Killing vector fields \eqref{lifted} (which are lifts to $M^0$ of the generators of $\T^{d_j}$-actions on $\PP^{d_j}$)   we find from \eqref{k-order-ell} that there are uniform constants such that
\[||JX^j_{k}||_g \le A\xi_{\ell} - A' \le C d^g(p_0, \cdot) - C'.\]
This again shows that $JX^j_k$ are complete vector fields by Lemma \ref{criterion}.
 \end{proof}
 
 \begin{rem}\label{symmetry}  Notice that the principal $\T^{\ell}$-fibre bundle $P$ over $\prod_{j=1}^{\ell} \PP^{d_j}$ admits a natural $\prod_{j=1}^{\ell} {\rm U}(d_j+1)$-action, coming from the ${\rm U}(d_j+1)$-action on $\cO_{\PP^{d_j}}(-1)$. By \cite[Lemma 5]{ACGT1}, the GKRS metric $(g, \omega)$ is $\prod_{j=1}^{\ell} {\rm U}(d_j+1)$-invariant on $\R^{2n}\setminus \{0\}$, and hence on $(\R^{2n}, g)$ by isometric extension of the action (using  that $g$ is complete when $a\ge 0$). \end{rem}
 \begin{proof}[Proof of Theorem~\ref{thm:Cao}] The theorem follows by combining Lemmas~ \ref{irreducible}, \ref{l:extension}, \ref{l:completion}, \ref{l:biholomorphic} and Remark~\ref{symmetry} above.
\end{proof}

\subsection{Generalizing the Taub-NUT model: Proof of Theorem~\ref{thm:Taub-NUT}}\label{s:Taub-NUT}
We now consider the case when $\ell\ge 2$ and
\[(\xi_1, \ldots, \xi_{\ell}) \in \mathring{\rm D}:=(-\infty, \alpha_1)\times (\alpha_1, \alpha_2) \times \cdots \times (\alpha_{\ell-2}, \alpha_{\ell-1}) \times (\alpha_{\ell}, +\infty), \]
for given real numbers 
\[-\infty < \alpha_1< \cdots < \alpha_{\ell} < +\infty.\]
We fix $d_1, \ldots, d_{\ell-1} \in \Z_{\ge 0}$  such that
\[ n= \ell + \sum_{j=1}^{\ell-1} d_j,  \]
and let
\begin{equation}\label{pc-TN} 
p_c(t) := \prod_{j=1}^{\ell-1}(t-\alpha_j)^{d_j}, \qquad P(t):= \lambda \left(\prod_{j=1}^{\ell-1}(t-\alpha_j)^{d_j +1}\right), \qquad \lambda >0. 
\end{equation}
Now  $P(t)$ is a degree $(n-1)$ polynomial and for any real number $a$, we  obtain a degree $(\ell-1)$ polynomial (see \eqref{general-solution-ell})
\[ q(t) := \frac{P'(t) + 2a P(t)}{p_c(t)} = \lambda\left(\sum_{j=1}^{\ell-1}\left(2a(t-\alpha_j) +(d_j+1)\right) \!\!\!\!\!\! \prod_{k=1, k\neq j}^{\ell-1}(t-\alpha_k)\right)  .\]
Let
\[ F_1(t)=\cdots = F_{\ell-1}(t) := P(t), \quad F_{\ell}(t) := e^{-2at}\left(\int_{\alpha_{\ell}}^t e^{2at} q(x)p_c(x) dx\right). \]
Notice that $F_j(t), j=1, \ldots, \ell$ satisfy \eqref{ODE-F} and we have (see \eqref{general-solution-ell})
\[F_{\ell}(t) = P(t) + c e^{-2at}, \qquad c:= -e^{2a\alpha_{\ell}}P(\alpha_{\ell}).\]
 It is easily seen that $F_j(t)$ satisfy the inequalities \eqref{inequality} with respect to the data
\[ N=\ell-1, \qquad \eta_j:=\alpha_j, \, j=1, \ldots, \ell-1,  \]
and
\[ \varepsilon_j =(-1)^{\ell-j}, \, j=1, \ldots, \ell-1, \, \varepsilon_{\ell} =-1.\]
Furthermore, \eqref{Cao-signs-final} holds with our choice above.

Thus, with these data,  we obtain again the setup of \eqref{theta}-\eqref{k-order-ell} which gives rise to a K\"ahler metric on  $M^0= \mathring{\rm D} \times P$,  where $P$ is a principal $\T^{\ell}$-bundle over $B=\prod_{j=1}^{\ell-1} \PP^{d_j}, \, d_j \ge 0$ with curvature
\[ d\theta = \sum_{j=1}^{\ell} \check{\omega}^0_j\otimes v_j,  \]
where
\[v_j :=\left(\frac{2(d_j+1)}{q(\alpha_j)}\right)\Big(\alpha_j^{\ell-1}, \cdots,(-1)^{r-1}\alpha_j^{\ell-r}, \ldots, (-1)^{\ell-1}\Big).  \]
In the above,  $\check{\omega}^0_j$ denotes a Fubini--Study on $\PP^{d_j}$ of scalar curvature $2(d_j+1)d_j$ and we have set $d_{\ell}=0$, $\PP^{d_{\ell}}=\{pt\}$ and $\check{\omega}^0_{\ell}=0$. Furthermore,  $(v_j), j=1, \ldots, \ell$ is the basis of a lattice $\Gamma_{v} \subset \R^{\ell}$ such that $\T^{\ell}= \R^{\ell}/2\pi \Gamma_{v}$.

Letting $\check{\omega}_j := -\left(\frac{2(d_j+1)}{\varepsilon_j q(\alpha_j)}\right)\check{\omega}_j^0$ (which is positive definite with the choice of $\varepsilon_j$ as above), we have that $\check{\omega}_j$ is a K\"ahler--Einstein metric  on $\PP^{d_j}$ of scalar curvature $s_j=- \varepsilon_j d_j q(\alpha_j)$.  By Lemma~\ref{l:KRS-ell}, the  resulting K\"ahler metric is a steady GKRS on $M^0$.

\bigskip
A similar analysis  as in Section~\ref{s:type-C^n} leads to

\begin{lemma}\label{l:new} Given integers $\ell\ge 2, d_1, \ldots, d_{\ell-1}\ge 0$ with $n=\ell + \sum_{j=1}^{\ell-1}d_j$,  and a real constant $a$,  the above construction defines an  $(\ell-1)$-dimensional continuous family of non-isometric up to scale,   irreducible $\T^n$-invariant steady GKRS on $\R^{2n}$, compatible with the standard symplectic structure.  For $a>0$,  the corresponding K\"ahler metrics define complete steady GKRS on $\C^n$, whereas for $a=0$, they form an $(\ell-2)$-dimensional family of complete Ricci-flat (but not flat) metrics on $\C^n$. These complete metrics have an  $ {\rm U}(d_1+1) \times \ldots \times {\rm U}(d_{\ell-1}+1) \times {\rm U}(1)$ symmetry.
\end{lemma}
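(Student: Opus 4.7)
The proof follows the strategy of Theorem~\ref{thm:Cao}, adapted to the present setup in which only $\ell-1$ of the eigenvalues $\alpha_j$ correspond to projective base factors $\PP^{d_j}$, while the ``extra'' value $\alpha_\ell$ (with $d_\ell=0$) gives a bare ${\rm U}(1)$-collapse. By Lemma~\ref{l:KRS-ell}, the K\"ahler structure \eqref{k-order-ell} assembled in Section~\ref{s:Taub-NUT} is already a steady GKRS on $M^0$, so the task reduces to establishing the analogues of Lemmas~\ref{irreducible}, \ref{l:extension}, \ref{l:completion}, \ref{l:biholomorphic} and Remark~\ref{symmetry}.

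First I would identify the Delzant image: using the elementary symmetric functions $\sigma=\sigma(\xi)$ together with the base moment maps $\check{\mu}^j$, formula \eqref{mu} exhibits the momentum map of $(M^0,\omega)$ as a diffeomorphism onto the interior of a simplicial cone in $\R^n$ cut out by the $n$ labels $\{L_j(\sigma)\check{L}^j_k(\check{\mu}^j):j=1,\ldots,\ell-1,\,k=0,\ldots,d_j\}$ coming from the projective base factors, together with one additional label $L_\ell(\sigma)=-\tfrac{2}{q(\alpha_\ell)}p_{nc}(\alpha_\ell)$ corresponding to the $\alpha_\ell$ boundary. The sign conventions of Section~\ref{s:Taub-NUT} yield positivity on the interior, identifying this image with the standard Delzant cone of $(\C^n,\omega_0)$. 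Smooth extension to $\R^{2n}$ then follows from the asymptotic analysis of \eqref{symplectic-potential} near each facet via \cite[Thm.~2]{Abreu1}, formally identical to Lemma~\ref{irreducible}. Irreducibility follows from Theorem~\ref{thm:CMR}: since $F_1=\cdots=F_{\ell-1}=P(t)$ is polynomial while $F_\ell(t)=P(t)+ce^{-2at}$ with $c=-e^{2a\alpha_\ell}P(\alpha_\ell)\neq 0$ is not, the $F_j$'s cannot all coincide, forcing $D(J,g)\le 2$ and ruling out any parallel (order-zero) hamiltonian $2$-form, hence any local product structure.

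For the parameter count, the data $(\alpha_1,\ldots,\alpha_\ell,\lambda,a)\in\R^{\ell+2}$, modded out by the affine action $\phi\mapsto p\phi+q\omega$ on the hamiltonian $2$-form (2 parameters) and the overall scaling of the metric (which absorbs $\lambda$), leaves $\ell-1$ effective parameters; the Ricci-flat slice $a=0$ accordingly has dimension $\ell-2$. The non-isometry argument of Lemma~\ref{l:extension} transfers verbatim: since $D(J,g)=2$, the intrinsic $2$-plane $\mathrm{span}\{\omega,\phi\}$ determines the eigenvalues $\alpha_j$ up to the fixed affine normalization, and the constant $a$ is recovered as the coefficient of $\sigma_1$ in the soliton Killing potential given by Lemma~\ref{l:KRS-ell}. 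The $a=0$ subfamily is non-flat in general because $F_\ell(t)=P(t)-P(\alpha_\ell)$ does not have the form $\lambda\prod(t-\alpha_j)^{d_j+1}$ required for flatness by \cite[Prop.~17]{ACG}.

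Completeness for $a\ge 0$ mirrors Lemma~\ref{l:completion}, but must now be run at \emph{both} non-compact ends $\xi_1\to-\infty$ and $\xi_\ell\to+\infty$; in each case the asymptotic $F_j(t)\sim\lambda\,t^{n-1}$ combined with $p_c(\xi_j)\Delta(\xi_j)\sim\xi_j^{n-1}$ shows that the $d\xi_j^2$ coefficient in \eqref{k-order-ell} is bounded below, yielding $d^g(p_0,p)\ge C(|\xi_1(p)|+|\xi_\ell(p)|)-C'$ outside a compact set. The same asymptotics produce linear bounds $\|JK_r\|_g,\,\|JX^j_k\|_g\le C(|\xi_1|+|\xi_\ell|)+C'$, so Lemma~\ref{criterion} makes $JK_r$ and $JX^j_k$ complete, and Proposition~\ref{biholomorphic} identifies $(M,J)$ equivariantly with $\C^n$. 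The $\prod_{j=1}^{\ell-1}{\rm U}(d_j+1)\times{\rm U}(1)$ symmetry is inherited from the natural action on the principal $\T^\ell$-bundle $P\to\prod_{j=1}^{\ell-1}\PP^{d_j}$, the last ${\rm U}(1)$ being the fiber rotation associated to the $\alpha_\ell$ boundary, with isometricity guaranteed by \cite[Lemma~5]{ACGT1}. The main obstacle I anticipate is the smoothness check at the bare facet $\{\xi_\ell=\alpha_\ell\}$: unlike every other facet of the polytope, this one is not ``dressed'' by a projective base factor, so one must verify directly that \eqref{symplectic-potential} and \eqref{det} degenerate in the standard Delzant way along this isolated edge.
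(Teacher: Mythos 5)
Your proposal is correct and follows essentially the same route as the paper, which itself simply transfers the arguments of Lemmas~\ref{irreducible}, \ref{l:extension}, \ref{l:completion} and \ref{l:biholomorphic} with the two-ended distance estimates $d^g(p_0,p)\ge C\xi_\ell(p)+C'$ and $d^g(p_0,p)\ge -C\xi_1(p)+C''$, the mobility bound $D(J,g)=2$ from Theorem~\ref{thm:CMR} via $F_\ell\neq F_1$, and \cite[Prop.~16, 17]{ACG} for Ricci-flatness and non-flatness at $a=0$. One minor imprecision: when $a=0$ the function $F_\ell(t)=P(t)-P(\alpha_\ell)$ \emph{is} a polynomial, but Theorem~\ref{thm:CMR} still applies because $F_\ell\neq F_1$, which is exactly the condition the paper invokes.
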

\begin{proof}  The first part of the proof is identical to the arguments in the proof of Lemmas~\ref{l:extension} and is left to the reader. We use \cite[Prop.~16]{ACG} to obtain the Ricci-flatness in the case $a=0$, and \cite[Prop.~17]{ACG} to rule out the flatness of the metric.  The (local)  irreducibility of the metric and the dimension of the families is established as in the proof of Lemma~\ref{irreducible}, using  that the space of hamiltonian $2$-forms  is $2$-dimensional (see Theorem~\ref{thm:CMR} and note  that $F_{\ell}(t) \neq F_1(t)$). To prove the completeness  of the metric, we proceed as in the proof of Lemma~\ref{l:completion},   using two separate estimates (which hold for $|\xi_1(p)|$  and $|\xi_{\ell}(p)|$ big enough):
\begin{equation}\label{distance-2}
 d^g(p_0, p) \ge C \xi_{\ell}(p) + C', \qquad  d^g(p_0, p) \ge -C\xi_{1}(p)+C'',\end{equation}
where each is obtained similarly to the estimate \eqref{distance}.
For showing that $JK_r$ is a complete vector field, we  compute using the specific form of $F_j(t)$ 
\[
\begin{split}
g(JK_r, &JK_r) =g(K_r, K_r) \\
&= \sum_{j=1}^{\ell}\left(\frac{F_j(\xi_j)}{p_c(\xi_j)\Delta(\xi_j)}\sigma^2_{r-1}(\hat \xi_j)\right) \\
                                            &= \lambda\sum_{j=1}^{\ell}\left(\frac{\prod_{k=1}^{\ell-1}(\xi_j - \alpha_k)}{\Delta(\xi_j)}\sigma^2_{r-1}(\hat \xi_j)\right) + \left(\frac{ce^{-2a\xi_\ell}}{p_c(\xi_\ell)\Delta(\xi_\ell)}\sigma^2_{r-1}(\hat \xi_\ell)\right).   %\\
                                            %&= \lambda + \left(\frac{ce^{-2a\xi_\ell}}{p_c(\xi_\ell)\Delta(\xi_\ell)}\sigma_{r-1}(\hat \xi_\ell)\right),
\end{split}\]
where we recall that $p_c$ is given by \eqref{pc-TN}. From the above and \eqref{distance} we obtain, outside a compact set, that there are uniform constants such that
\[ ||K_1||_g \le C, \]
and
\[\|K_r\|^2_g \le A(|\xi_1|^2 + |\xi_{\ell}|^2) \le C''(d^g(p_0, p) )^2, \, r=2, \ldots, \ell.\]
Similarly to the Cao type case, from the above and \eqref{distance-2}, we can also evaluate the square-norms of the lifted Killing fields \eqref{lifted} (when $d^g(p_0, p) \to \infty$) 
\[ ||X_k^j||^2_g \le \left|\prod_{i=1}^{\ell} (\alpha_j- \xi_i)\right| + Cd^g(p_0, p) + C' \le C'' \left(d^g(p_0, p)\right)^2, \]
for $ j=1, \ldots, \ell-1.$ We conclude again by Lemma~\ref{criterion}.
\end{proof}

\begin{proof}[Proof of Theorem~\ref{thm:Taub-NUT}] This is an immediate consequence of Lemma~\ref{l:new}.
\end{proof}

\subsection{The volume growth of riemannian balls} The complete steady GKRS's on $\C^n$ obtained in the previous lemma are not even locally isometric to the ones  given by Lemma~\ref{l:biholomorphic}. This can be seen by noticing that the corresponding hamiltonian $2$-forms have different invariants (order and number of distinct constant eigenvalues), and then using Theorem~\ref{thm:CMR} as in the proof of Lemma~\ref{irreducible}. One can also show that  when $a>0$, the soliton vector field for a metric given by Lemma~\ref{l:new}  is of the form $a\left(\sum_{i=1}^{n} \lambda_i X_i \right)$ with $\prod_{i=1}^n \lambda_i<0$  whereas the soliton vector field for a metric given by Lemma~\ref{l:biholomorphic} is of the form $a\left(\sum_{i=1}^n \lambda_i X_i \right)$ with $\lambda_i>0$. Below we reveal yet another geometric difference of the two type of GKRS's, expressed in terms of the growth of the volume of the riemannian balls $B^{g}(p_0, r)$ as $r \to \infty$.
\begin{lemma}\label{l:volume} Let $g$ be the complete riemannian metric on $\R^{2n}$,  corresponding to a steady GKRS on $\C^n$ which is either of Cao type, i.e. obtained by the construction of Lemma~\ref{l:completion} with $a>0$,  or  is of Taub-NUT type, i.e. obtained by the construction of Lemma~\ref{l:new} with $a\ge 0$. Denote by $V^g(r)$ the volume with respect to $g$ of the riemannian ball $B^g(p_0, r)$ of radius $r>0$ centered at some base point $p_0 \in \R^{2n}$. Then, as $r\to \infty$, $V^g(r) \sim r^{n}$ in the Cao type case and $V^{g}(r) \sim r^{2n-1}$ in the Taub-NUT type case.
\end{lemma}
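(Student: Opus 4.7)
The plan is to reduce the Riemannian volume to an explicit Lebesgue integral on the Delzant polyhedron $\Pol_n$ using the toric structure of $\C^n$, and then to carry out an asymptotic analysis of that integral in the hamiltonian $2$-form coordinates $(\xi_1,\ldots,\xi_\ell)$. First, placing $p_0$ at the origin (a $\T^n$-fixed point) makes $B^g(p_0,r)$ invariant under the isometric $\T^n$-action; since $\omega$ is the standard symplectic form on $\R^{2n}$ and the momentum map $\mu:\C^n\to \Pol_n$ is proper with $\T^n$-orbits of volume $(2\pi)^n$,
\[
V^g(r)=(2\pi)^n\,\operatorname{Leb}\bigl(\mu(B^g(p_0,r))\bigr).
\]
Using the identification \eqref{mu} and integrating out the base coordinates $\check\mu^j$ over the fixed standard simplices produces a factor $\prod_j L_j(\sigma)^{d_j}$, and passing from $(\sigma_r)$ to $(\xi_i)$ introduces the Vandermonde Jacobian; combined with \eqref{fiber-label} this gives, for a positive constant $C$,
\[
V^g(r)=C\int_{\Omega(r)}\prod_{j=1}^\ell\Bigl|\prod_{i=1}^\ell(\alpha_j-\xi_i)\Bigr|^{d_j}\Bigl|\prod_{i<k}(\xi_i-\xi_k)\Bigr|\,d\xi_1\cdots d\xi_\ell,
\]
where $\Omega(r)$ is the image of $B^g(p_0,r)$ under the proper map $\mathring{\mathrm D}\ni p\mapsto(\xi_1(p),\ldots,\xi_\ell(p))$.

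Second, I would pin down $\Omega(r)$ asymptotically by establishing matching two-sided bounds $d^g(p_0,p)\sim \xi_\ell(p)$ (Cao type) and $d^g(p_0,p)\sim |\xi_1(p)|+\xi_\ell(p)$ (Taub-NUT type). The lower bounds are already \eqref{distance} and \eqref{distance-2}; the matching upper bounds would follow by integrating length along a radial curve in the unbounded $\xi$-direction, using that the diagonal coefficient
\[
g_{\xi_j\xi_j}=\frac{p_c(\xi_j)\Delta(\xi_j)}{F_j(\xi_j)}
\]
stays bounded at infinity. This last fact is a shared asymptotic feature of both constructions: $F_j(t)=P(t)+c\,e^{-2at}$ with $P(t)$ a polynomial of degree $n-1$ in the Cao case, and $F_j(t)=P(t)$ or $P(t)+c\,e^{-2at}$ in the Taub-NUT case, so that $F_j(t)\sim \mathrm{const}\cdot t^{n-1}$, which matches the growth $p_c(t)\Delta(t)\sim t^{(n-\ell)+(\ell-1)}=t^{n-1}$ along the unbounded direction.

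Third, I would compute the integral asymptotically. In the Cao case only $\xi_\ell$ is unbounded, the remaining $\xi_j$'s being confined to bounded intervals $(\alpha_{j-1},\alpha_j)$, so the integrand is asymptotic to $\xi_\ell^{n-\ell}\cdot\xi_\ell^{\ell-1}=\xi_\ell^{n-1}$ (the first factor from $\prod_j|\alpha_j-\xi_\ell|^{d_j}$, the second from Vandermonde factors $|\xi_\ell-\xi_i|$), yielding
\[
V^g(r)\sim\int_0^{Cr}\xi_\ell^{n-1}\,d\xi_\ell\sim r^n.
\]
In the Taub-NUT case, writing $u=\alpha_1-\xi_1\to\infty$ and $v=\xi_\ell-\alpha_\ell\to\infty$ while $\xi_2,\ldots,\xi_{\ell-1}$ remain bounded, the $\prod_j L_j^{d_j}$ factor contributes $u^{n-\ell}v^{n-\ell}$ and the Vandermonde contributes $u^{\ell-2}v^{\ell-2}(u+v)$ (since $|\xi_1-\xi_\ell|\sim u+v$), so the integrand is $\sim u^{n-2}v^{n-2}(u+v)$; rescaling $u=rs$, $v=rt$ over the bounded region $\{s+t\le C\}$ gives
\[
V^g(r)\sim\iint_{u+v\le Cr}u^{n-2}v^{n-2}(u+v)\,du\,dv\sim r^{2n-1}.
\]

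The main obstacle I anticipate is the matching upper bound for $d^g(p_0,\cdot)$; while the lower bounds are already available, converting the asymptotic $F_j(t)\sim t^{n-1}$ into a quantitative linear upper bound on $d^g$ along arbitrary geodesics (not just the $\xi$-radial ones) requires care that the cross-terms $g_{\xi_i\xi_j}=0$ and the angular $\theta$-coefficients in \eqref{k-order-ell} remain bounded in the relevant regime, which is where the whole structure of the generalized Calabi construction is used.
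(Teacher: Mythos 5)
Your proposal is correct and follows essentially the same route as the paper: two-sided asymptotics $d^g(p_0,p)\sim\xi_\ell$ (resp.\ $\sim\xi_\ell-\xi_1$), reduction of the volume to the measure $\left|\det V(\xi)\prod_j p_c(\xi_j)\right|d\xi$ on the corresponding $\xi$-region, and extraction of the leading power from the Vandermonde expansion, which matches the paper's computation exactly. The only point you defer --- the matching \emph{upper} bound on $d^g$ --- is handled in the paper precisely as you sketch (a single test path combining a horizontal lift of a base geodesic with a straight line in $\xi$, using that $p_c\Delta/F_j$ is bounded and that the base term contributes only $O(\sqrt{\xi_\ell})$); note that for this upper bound one path suffices, so your worry about controlling arbitrary geodesics does not arise.
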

\begin{proof} As before, we denote by $d^g(p_0, p)$ the riemannian distance and by $\xi_j(p)$ the values of the global smooth functions $\xi_j$ on $M=\R^{2n}$. To simplify our argument, we shall (without loss) establish the result  on the dense subset $M^0$ of $M=\R^{2n}$, where we recall  $M^0=\mathring{{\mathrm D}} \times P,$ where $P \to \prod_{j=1}^{\ell} \PP^{d_j}$ is a principal $\T^\ell$-bundle and $\mathring{\rm D}=(\alpha_1, \alpha_2) \times \cdots \times (\alpha_{\ell-1}, \alpha_{\ell})$ in the Cao type case and $\mathring{\rm D}=(-\infty, \alpha_1) \times (\alpha_1, \alpha_2) \times \cdots \times (\alpha_{\ell-2}, \alpha_{\ell-1}) \times (\alpha_{\ell}, + \infty)$ in the Taub-NUT case. We denote as above by $d^g(p_0, p)$ the riemannian distance between a point $p\in M^0$ and a fixed base point $p_0\in M^0$, and by $\xi_j(p)$ the corresponding coordinate of $p$ on the $\mathring{\rm D}$-factor. 

We start by establishing the following asymptotic behaviour when $d^g(p_0, p) \to \infty$:
\begin{equation}\label{distance-Cao}
d^g(p_0, p) \sim \xi_{\ell}(p) \, \, \textrm{in the Cao type case}, 
\end{equation}
and
\begin{equation}\label{distance-Taub-NUT}
\qquad d^g(p_0, p) \sim \xi_{\ell}(p) - \xi_1(p) \, \, \textrm{in the Taub-NUT case}.
\end{equation}

We first consider the Cao type case. The asymptotic \eqref{distance-Cao} means that there  is a  uniform positive constant $\Lambda>0$ such that 
\[ \frac{1}{\Lambda} \xi_{\ell}(p) \le  d^g(p_0, p) \le \Lambda \xi_{\ell}(p) \]
holds when $d^g(p_0,p)$ is sufficiently large.  The inequality on the LHS follows from \eqref{distance}. 

To establish the inequality at the RHS, let  $p=(\xi, p')\in M^0=\mathring{\rm D} \times P$ and  $p_0=(\xi^0, p'_0)$.  Without loss, we assume $\xi_{\ell}^0>0$.  As $\xi_1, \ldots, \xi_{\ell-1}$ are continuous and bounded  and $\sigma_1= \xi_1 + \cdots +\xi_{\ell}$ is proper on $M=\R^{2n}$, we can also assume (by taking $d(p_0, p)$ be large enough) that $\xi_{\ell} > \xi_{\ell}^0>0$. Let $\tilde p_0=(\xi^0, {\tilde p}'_0)$ be a point in the $\T^{\ell}$-orbit of $p_0$,  such that $p'$ and ${\tilde p}_0'$ are joined by the horizontal lift $c(t)$ of a minimal geodesic on $\prod_{j=1}^{\ell} (\PP^{d_j}, \check{\omega}_j^0)$; here, we endow the principal $\T^{\ell}$-bundle $P \to \prod_{j=1}^{\ell} \PP^{d_j}$  with the invariant metric 
\[ g_P := \sum_{j=1}^{\ell} \left( \check{\omega}^0_{j} + \theta_j \otimes \theta_j\right).\]
We thus have
\[ d^g(p_0, p) \le d^g({\tilde p}_0, p) + {\rm diam}(\T^{\ell} \cdot p_0). \]
To establish the desired inequality  (for all $p$ such that $d^g(p_0, p)$ and hence $\xi_{\ell}(p)$ is big enough), we can therefore assume without loss that $p_0 = {\tilde p}_0$.  Let $\gamma(t)= (t\xi + (1-t) \xi^0, c(t))$ be a curve joining 
$p=(\xi_{\ell}, p')$ and $p_0=(\xi_{\ell}^0, p_0'), \, t\in [0, 1]$, where $c(t)$ is a horizontal geodesic on $P \to \prod_{j=1}^{\ell} (\PP^{d_j}, \check{\omega}_j^0)$.
We use the form \eqref{k-order-ell} of the metric $g$ with $F_j(t) =F(t) = \prod_{j=1}^{\ell}(t- \alpha_j)^{d_j+1} G(t),$ where $G(t)$ is analytic and strictly positive on $[\alpha_1, \infty)$. 
We denote by $M>0$ the minimum of $G(t)$ on $[\alpha_1, \alpha_{\ell}]$ and notice that for any $j=1, \ldots, \ell-1$
%~\footnote{CC: Should the second term instead be $ \left|\frac{\prod_{k\neq j}(\xi_j - \xi_k)}{\prod_{k=1}^\ell (\xi_j - \alpha_k) G(\xi_j)}\right|$?}
\[\left|\frac{p_c(\xi_j)\Delta(\xi_j)}{F(\xi_j)}\right| = \left|\frac{\prod_{k\neq j}(\xi_j - \xi_k)}{\prod_{k=1}^\ell (\xi_j - \alpha_k) G(\xi_j)}\right| \le \frac{C}{M}\left|\frac{1}{(\xi_j-\alpha_j)}\right|,\]
where the constant $C$ depends only on the $\alpha_j's$. 

As the polynomial $q(t)$ in the definition \eqref{F} of $F(t)$ is of degree $(\ell-1)$ (recall that $q$ has a unique root in each interval $(\alpha_i, \alpha_{i+1})$), $F(t)= P(t) + ce^{-at}$ where $P(t)$ is of polynomial of degree $(n-1)$. It thus follows that there exists a uniform constant $N>0$ such that $\left| F(t)/t^{n-1} \right| \ge N$%\footnote{\textcolor{red}{CC: I'm not sure I fully understand the referee's (referee Q) concern here. I the point that we really want to make here is that $|F(t)|/|t|^{n-1}$ is uniformly bounded from below for all $t$ such that $|t|$ is sufficiently large. This then applies equally well to the Taub-NUT case where I guess it's not always true that $F(t)/t^{n-1} > 0$ for $t < \alpha_1$. Perhaps we should just replace $F(t)/t^{n-1}$ with $\left| F(t)/t^{n-1} \right|$?  }} 
when $|t|$ is sufficiently big. It follows that for $d^g(p, p_0)$ (and hence also $\xi_{\ell}$) large enough
\[\left|\frac{p_c(\xi_\ell)\Delta(\xi_\ell)}{F(\xi_\ell)}\right| \le \frac{C'}{N}, \]
where $C'$ is a uniform constant depending only on $\alpha_j's$.

From the above  and  \eqref{k-order-ell} we deduce that for  $d^g(p_0, p)$ (and hence also $\xi_{\ell}(p))$ large enough
\[
\begin{split} d^{g}(p_0, p) &\le L^g(\gamma(t))) \le D \sqrt{(\xi_{\ell}-\alpha_{1})} +  2\sqrt{\frac{C}{M}}\sum_{j=1}^{\ell-1}\sqrt{|\xi_j- \alpha_j|} + \frac{C'}{N}\xi_{\ell} \\
& \le D \sqrt{(\xi_{\ell}-\alpha_{1})} +  2\sqrt{\frac{C}{M}}\sum_{j=1}^{\ell-1}\sqrt{|\alpha_{j+1}- \alpha_j|} + \frac{C'}{N}\xi_{\ell}  \le \Lambda \xi_{\ell}.
\end{split} \]
 The term $D \sqrt{(\xi_{\ell}-\alpha_{1})}$ above comes from the calculation of $L^{g}(\gamma(t))$ along the horizontal part of the metric. Here 
 $D$  is a uniform positive constant which depends on the $\alpha_j's$ and the diameter of $\prod_{j=1}^{\ell}(\PP^{d_j}, \check{\omega}_j^0)$.   The  two other terms in the inequality above come from the vertical component of the metric.

\bigskip
The asymptotic \eqref{distance-Taub-NUT} is  proved similarly, with some straightforward modification in the arguments, but the last inequalities yield
%\[d^{g}(p_0, p)  \le D\sqrt{(\xi_{\ell}-\alpha_{1})(\alpha_{\ell}-\xi_1)}  + 2\sqrt{\frac{C}{M}}\sum_{j=2}^{\ell-1}\sqrt{|\alpha_{j+1}- \alpha_j|} + \frac{C'}{N}(\xi_{\ell} - \xi_1).  \]
\[d^{g}(p_0, p)  \le D\sqrt{(\xi_{\ell}-\alpha_{1})(\alpha_{\ell}-\xi_1)} + \frac{C'}{N}(\xi_{\ell} - \xi_1) + C'',  \]
where $C'' = 2\sqrt{\frac{C}{M}}\sum_{j=2}^{\ell-1}\sqrt{|\alpha_{j+1}- \alpha_j|}$. Applying the Cauchy-Schwarz inequality to the first summand on the RHS  leads to the desired estimate $d^{g}(p_0, p) \le \Lambda(\xi_{\ell}-\xi_1)$. 

\bigskip
It now follows from \eqref{distance-Cao}  and the form of the symplectic form $\omega$ in \ref{k-order-ell} that,  in the Cao type case,  the volume of the riemannian ball $B^g(0, r)$  of radius $r$ centred at the origin of $\C^n$ grows  when $r\to \infty$ as the volume of ${\rm D}_r:=[\alpha_1, \alpha_2]\times \cdots \times [\alpha_{\ell-1},\alpha_{\ell}]\times [\alpha_{\ell}, r]$ with respect to the measure 
\begin{equation}\label{volume-ratio} \left|\det\left(V(\xi_1, \ldots, \xi_{\ell})\right)\left(\prod_{j=1}^\ell p_c(\xi_j)\right)\right|d\xi_1\ldots d\xi_{\ell},\end{equation}
where $V(\xi_1, \ldots, \xi_\ell)$ is the Vandermonde  matrix (see e.g. \cite[App.~B]{ACG}).  Expanding with respect to $\xi_{\ell}$ in the Cao type case the  polynomial quantity
%\[ \det V(\xi_1, \ldots, \xi_{\ell})\left(\prod_{j=1}^\ell p_c(\xi_j)\right) =\xi_\ell^{n-1} \det V(\xi_1, \ldots, \xi_{\ell-1}) +  \, \textrm{lower  order} , \]
\[\begin{split} W(\xi_1, \ldots, \xi_\ell) &:= \det V(\xi_1, \ldots, \xi_{\ell})\left(\prod_{j=1}^\ell p_c(\xi_j)\right) \\
&=\xi_\ell^{n-1} \det V(\xi_1, \ldots, \xi_{\ell-1}) +  \, \textrm{lower  order} , \end{split} \]
we get the rate $\sim r^n$ for the volume  ${\rm D}_r$ and hence of $B^g(0, r)$. In the Taub-NUT type case,  we have that the volume of $B^g(0, r)$ grows at the rate of the volume of ${\rm D}_{-r,r}:=[-r, \alpha_1]\times [\alpha_1, \alpha_2] \times \cdots \times [\alpha_{\ell-2}, \alpha_{\ell-1}]\times [\alpha_{\ell}, r]$ with respect to the same measure \eqref{volume-ratio}.  In this case, we expand with respect to the absolute degree of the  two variables $\xi_1$ and $\xi_{\ell}$:
%\[ (-1)^{\ell-1}\det V(\xi_1, \ldots, \xi_{\ell})\left(\prod_{j=1}^\ell p_c(\xi_j)\right) = (\xi_{1}^{n-1}\xi_{\ell}^{n-2} -\xi_1^{n-2}\xi_{\ell}^{n-1}) \det V(\xi_2, \ldots, \xi_{\ell-1})+   \, \textrm{lower order} \]
\[\begin{split} W &:= (-1)^{\ell-1}\det V(\xi_1, \ldots, \xi_{\ell})\left(\prod_{j=1}^\ell p_c(\xi_j)\right) \\
&=(\xi_{1}^{n-1}\xi_{\ell}^{n-2} -\xi_1^{n-2}\xi_{\ell}^{n-1}) \det V(\xi_2, \ldots, \xi_{\ell-1})+   \, \textrm{lower order} , \end{split} \]
which gives, after integration,  a volume rate $\sim r^{2n-1}$. \end{proof}

% \begin{rem}\label{r:Scal-decay} According to \cite[(79)]{ACG}, the scalar curvature of the metrics is given by the formula \[ \Scal_g = \sum_{j=1}^\ell\left( \frac{\varepsilon_j d_j q(\alpha_j)}{\prod_{k=1}^{\ell}(\alpha_j -\xi_k)} - \frac{F_j''(\xi_j)}{p_c(\xi_j) \Delta(\xi_j)}\right).\] Using that in the Cao type case $F_j(t)=F(t)= P(t) + c e^{-2at}$ for a degree $(n-1)$ polynomial $P(t)$ and \eqref{distance-Cao} we obtain a decay of $\Scal_g $  to $0$ when $d^g(p_0, p) \to \infty$ at the rate of $\frac{1}{d^g(p_0, p)}$.~\footnote{VA: Is this consistant with Conlon-Deruelle ?} \end{rem}

\section{The 2-dimensional case}\label{s:D2} Here we specialize our previous constructions to $n=2$ in order to give a concise picture for the complete steady GKRS's on $\C^2$ we have found.
 
 \subsection{Extending Cao's soliton on $\C^2$: Proof of Corollary~\ref{c:Cao-conjecture}}\label{ss:Cao2D} We consider first the examples  on $\C^2$ given by the deformation of the flat model as in Lemma~\ref{l:extension}. We can take either $\ell=1, d_1=1$ or $\ell=2, d_1=d_2=0$. In the first case, we obtain Cao's soliton on $\C^2$ described in Section~\ref{s:Cao}. In the latter case, we have an additional $1$-parameter family (up to scale) of complete steady K\"ahler-Ricci solitons,  parametrized by a positive constant $a>0$,  and given by the \emph{orthotoric} ansatz:
 \begin{equation}\label{Cao-extension}
 \begin{split}
 g= & \frac{(\xi_1-\xi_2)}{F(\xi_1)} d\xi_1^2 + \frac{(\xi_2-\xi_1)}{F(\xi_2)}d\xi_2^2 \\
       & + \frac{F(\xi_1)}{(\xi_1-\xi_2)}\left(dt_1 + \xi_2 dt_2\right)^2 + \frac{F(\xi_2)}{(\xi_2-\xi_1)}\left(dt_1 + \xi_1 dt_2\right)^2,\\
  \omega = & d\xi_1\wedge (dt_1+ \xi_2 dt_2)  + d\xi_2 \wedge (dt_1 + \xi_1dt_2).
  \end{split} \end{equation}
  with
  \[ F(t) := (1- e^{-2a}) t - (1  - e^{-2at}). \] 
The above metric is defined on $\mathring{\rm D}   \times \T^2$ with
\[  (\xi_1, \xi_2) \in \mathring{\rm D}  =(0, 1) \times (1, \infty). \]
Note that we can use the definition of $v_j$ \eqref{vj-def} to write the standard $\T^2$-action on $\C^2$ in terms of the orthotoric data above. Proceeding in this way, we see that the generators $X_1, X_2$ for the standard $\T^2$-action are given by
\[ X_1= -\left(\frac{2}{F'(0)}\right) \frac{\partial}{\partial t_2}, \qquad X_2 = \left(\frac{2}{F'(1)}\right)\left(\frac{\partial}{\partial t_1} - \frac{\partial}{\partial t_2}\right) \]
%are circle groups generators of $\T^2$; letting $\sigma_1= \xi_1 + \xi_2$ and $\sigma_2=\xi_1\xi_2$,
and the standard momentum coordinates are 
\[\mu_1 =- \left(\frac{2}{F'(0)}\right) \sigma_2, \qquad \mu_2 = \left(\frac{2}{F'(1)}\right)\left(\sigma_1 - \sigma_2\right),\]
where as usual $\sigma_1 = \xi_1 + \xi_2$ and $\sigma_2 = \xi_1\xi_2$. Our analysis in the previous sections shows that the K\"ahler metric $(g, \omega)$ extends smoothly on $\R^4$  and defines a  complete steady GKRS with a soliton vector field %~\footnote{VAS: can we specify the coefficients $F'(0)$ and $F'(1)$?}
\begin{equation}\label{vector-Cao}
\begin{split}
X&=   2a\frac{\partial}{\partial t_1}= a \left(-F'(0)X_1 + F'(1) X_2\right) \\
  &= a(e^{-2a} + 2a-1)X_1 + a(1-e^{-2a}(1+2a)) X_2.
\end{split}
\end{equation}
As we saw in Section \ref{s:type-C^n}, we can obtain other solutions by multiplying $F$ by a constant scale factor $c > 0$.  
\begin{rem}\label{specifyvf-Cao2d}
	Running through all choices of $a, c > 0$,  the corresponding soliton vector fields associated to the solutions for these choices exhaust the region 
	\[ X = \lambda_1 X_1 + \lambda_2 X_2, \]
	for $\lambda_1 > \lambda_2 > 0$.  By the $\Z_2$-symmetry of $\C^2$ interchanging the two axes fixed by the standard torus action, we therefore get all $X$ with $\lambda_1 \neq \lambda_2$, $\lambda_1, \lambda_2 > 0$. 

Indeed, observe that 
\[  \frac{\lambda_1}{\lambda_2} = \frac{e^{-2a} - 1 + 2a}{1 - e^{-2a}(1+2a)}, \]
which is strictly increasing in $2a$, and satisfies 
\[ \lim_{a \to \infty} \frac{\lambda_1}{\lambda_2}  = \infty, \qquad \lim_{a \to 0}  \frac{\lambda_1}{\lambda_2}  = 1. \]
In particular, for fixed $c > 0$ we can modify $a$ to obtain any ratio $\frac{\lambda_1}{\lambda_2} > 1$. Hence by adjusting $c$ we get all $X$ as claimed.
\end{rem}
By Lemma~\ref{l:biholomorphic}, the corresponding complex structure $J$ is equivariantly isomorphic to the standard complex structure $J_0$ on $\C^2$. To see this last point more directly, we consider the functions
\[ 
\begin{split}
y_1(\xi_1, \xi_2) :=& \int^{\xi_1} \frac{t}{F(t)} dt  + \int^{\xi_2} \frac{t}{F(t)} dt, \\
 y_2(\xi_1, \xi_2) :=&- \int^{\xi_1} \frac{1}{F(t)} dt - \int^{\xi_2} \frac{1}{F(t)} dt, \end{split} \]
 which are pluriharmonic with respect to $J$ (see \cite[p.~391]{ACG}). The fact that $J\frac{\partial}{\partial t_i}$ are complete vector fields tells us that $(y_1, y_2)$ sends $\mathring{\rm D}$  diffeomorphically to $\R^2$ (a fact that can be easily checked explicitly, using that $F(t)$ is an entire analytic function with simple zeroes at $0$ and $1$). Indeed, in the local (holomorphic) coordinates $(y_1+ \sqrt{-1} t_1, y_2 + \sqrt{-1} t_2)$, the vector fields $J\frac{\partial}{\partial t_i}$  become $\frac{\partial}{\partial y_i}$. Notice that the pluriharmonic functions
 \[
 \begin{split}
v_1(\xi_1, \xi_2) :=&  -\left(\frac{F'(0)}{2}\right)\left(\int_1^{\xi_1} \frac{t-1}{F(t)} dt  + \int_1^{\xi_2} \frac{t-1}{F(t)} dt\right), \\
v_2(\xi_1, \xi_2) :=&  +\left(\frac{F'(1)}{2}\right)\left( \int_0^{\xi_1} \frac{t}{F(t)} dt  +\int_0^{\xi_2} \frac{t}{F(t)} dt\right), \end{split} \]
allow us to identify the generators $JX_1$ and $JX_2$ of the $\C^{\times}\times \C^{\times}$-action with $\frac{\partial}{\partial v_1}$ and $\frac{\partial}{\partial v_2}$. 
Letting
\[ e^{2v_1} = |z_1|^2, \qquad e^{2v_2}= |z_2|^2, \]
determines implicitly $(\xi_1, \xi_2)$ as functions of $(|z_1|^2, |z_2|^2)$.

Finally, using that $F(t)$ is an analytic function  with single roots at $0$ and $1$,  letting 
\[ G(t) := \frac{F(t)}{t(t-1)}\]
defines a smooth positive function on $\R$. According to \cite[Prop.~11]{ACG}, the smooth function on $\R^2$
\[H(\xi_1, \xi_2):= \int^{\xi_1} \frac{dt}{G(t)} + \int^{\xi_2} \frac{dt}{G(t)} \]
 is a global K\"ahler potential of $\omega$, i.e. $\omega = dd^c H$. Using that $(\xi_1, \xi_2)$ extend as smooth functions to $\R^4$ (this is because the K\"ahler metric extends smoothly and hence the corresponding hamiltonian $2$-form with eigenvalues $(\xi_1, \xi_2)$ also does) $H$ is a global K\"ahler potential. By the above identification,  $H(\xi_1, \xi_2)$ can also be viewed as a smooth function $\Phi(|z_1|^2,|z_2|^2)$ on $\C^2$  with  $\omega = dd^c \Phi$.
 
 \begin{proof}[Proof of Corollary~\ref{c:Cao-conjecture}] The explicit form \eqref{Cao-extension} of the metric (with $F(t)=(1- e^{-2a}) t - (1  - e^{-2at}), \, a>0$) makes it straightforward to perform curvature computations.  It turns out that:
 \begin{enumerate}
 \item[$\bullet$] the sectional curvature of \eqref{Cao-extension} is positive;
 \item[$\bullet$] the scalar curvature of \eqref{Cao-extension} achieves its maximum at the origin of $\C^2$.
 \end{enumerate}
 For the reader's convenience, we provide the detailed verifications of the above facts in  Propositions~\ref{Ric>0}, \ref{scal-max} and \ref{full-sec} in the Appendix.
 \end{proof}

\subsection{Deforming the Taub-NUT metric on $\C^2$: Proof of Corollary~\ref{c:Taub-NUT}}  This the case considered in Sec.~\ref{s:Taub-NUT} with $\ell=2$ and $d_1=0$. According to Lemma~\ref{l:new}, we have a $1$-parameter family of toric steady GKRS on $\C^2$, parametrized by a non-positive constant $a\ge 0$, which are again given by the orthotoric ansatz
\begin{equation}\label{Taub-NUT-extension}
 \begin{split}
 g= & \frac{(\xi_1-\xi_2)}{F_1(\xi_1)} d\xi_1^2 + \frac{(\xi_2-\xi_1)}{F_2(\xi_2)}d\xi_2^2 \\
       & + \frac{F_1(\xi_1)}{(\xi_1-\xi_2)}\left(dt_1 + \xi_2 dt_2\right)^2 + \frac{F_2(\xi_2)}{(\xi_2-\xi_1)}\left(dt_1 + \xi_1 dt_2\right)^2,\\
  \omega = & d\xi_1\wedge (dt_1+ \xi_2 dt_2)  + d\xi_2 \wedge (dt_1 + \xi_1dt_2).
  \end{split} \end{equation}
  with
  \begin{equation}\label{taubnut-F} F_1(t) := t, \qquad F_2(t) := t -e^{2a}e^{-2at}. \end{equation}
The metric is now defined on $\mathring{\rm D}   \times \T^2$ with
\[  (\xi_1, \xi_2) \in \mathring{\rm D}  =(-\infty, 0) \times (1, \infty), \]
and 
\[ X_1= -\left(\frac{2}{F_1'(0)}\right) \frac{\partial}{\partial t_2}= -2\frac{\partial}{\partial t_2}, \]
\[ X_2 = \left(\frac{2}{F_2'(1)}\right)\left(\frac{\partial}{\partial t_1} - \frac{\partial}{\partial t_2}\right)=\frac{2}{1+2a}\left(\frac{\partial}{\partial t_1} - \frac{\partial}{\partial t_2}\right) \]
being the circle group generators of $\T^2$.

The K\"ahler metric \eqref{Taub-NUT-extension} extends smoothly on $\R^4$  and defines a  complete steady GKRS with a soliton vector field 
\begin{equation}\label{vector-Taub-NUT}
X=  2a\frac{\partial}{\partial t_1}=  a \left(-X_1 + (1+2a) X_2\right). \end{equation} 
Furthermore, by Lemma~\ref{l:biholomorphic}, the corresponding complex structure $J$ is equivariantly isomorphic to the standard complex structure $J_0$ on $\C^2$. Here again we can build the metric on $\C^2$  by considering the pluriharmonic functions
 \[
 \begin{split}
v_1(\xi_1, \xi_2) :=&  -\frac{F_1'(0)}{2}\left(\int_1^{\xi_1} \frac{t-1}{F_1(t)} dt  + \int_1^{\xi_2} \frac{t-1}{F_2(t)} dt\right), \\
                             =&  -\frac{1}{2}\left( \xi_1 - 1 - \log|\xi_1| + \int_1^{\xi_2} \frac{t-1}{F_2(t)} dt\right) \\
v_2(\xi_1, \xi_2) :=& \frac{F_2'(1)}{2}\left( \int_0^{\xi_1} \frac{t}{F_1(t)} dt  +\int_0^{\xi_2} \frac{t}{F_2(t)} dt\right) \\
                           =& \frac{(2a+1)}{2}\left( \xi_1  + \int_0^{\xi_2} \frac{t}{F_2(t)} dt\right).\end{split} \]
Letting
\[ e^{2v_1} = |z_1|^2, \qquad e^{2v_2}= |z_2|^2, \]
determines implicitly $(\xi_1, \xi_2)$ as functions of $(|z_1|^2, |z_2|^2)$.  
Furthermore, a  global K\"ahler potential is given in this case by
\[ 
\begin{split}
H(\xi_1, \xi_2)=& \int^{\xi_1} \frac{t(t-1)}{F_1(t)} dt + \int^{\xi_2}\frac{t(t-1)}{F_2(t)} dt \\
                       =& \frac{1}{2} (\xi_1-1)^2 + \int^{\xi_2}\frac{t(t-1)}{F_2(t)} dt . \end{split} \]        
             As $F_2(t)$ is analytic and monotone with simple zero at $1$, $H$ is globally defined on $\R^2$, thus giving rise to a global smooth function $\Phi(|z_1|^2, |z_2|^2)$ on $\C^2$ such that $\omega = dd^c H = dd^c \Phi$.    

\begin{proof}[Proof of Corollary~\ref{c:Taub-NUT}] The above construction applies for $a=0$,  in which case $F_2(t)= (t-1)$ and our calculation gives
\[ \begin{split}
v_1(\xi_1, \xi_2) =& -\frac{1}{2}\Big( \xi_1 + \xi_2 - 2 - \log|\xi_1|\Big) \\
v_2(\xi_1, \xi_2) = & + \frac{1}{2}\Big( \xi_1 + \xi_2  + \log|\xi_2-1|\Big) \\
|z_1|^2 = & -e^2 e^{-(\xi_1 + \xi_2)} \xi_1, \\
|z_2|^2 = & e^{(\xi_1 + \xi_2)} (\xi_2 - 1), \\
H(\xi_1, \xi_2)= & \frac{1}{2}\Big( (\xi_1-1)^2  + \xi_2^2\Big),
\end{split} \]
which easily connects to the computation in \cite{LeBrun} expressing the Taub-NUT metric.  Alternatively, we can refer to \cite{Gauduchon-Taub-NUT} which identifies \eqref{Taub-NUT-extension} for $F_1(t)=t, \, F_2(t)= (t-1)$ with the Taub-NUT metric. \end{proof} 

\begin{rem}\label{r:2dvectors}
As usual, we identify a point $X = \lambda_1 X_1 + \lambda_2 X_2$ with $(\lambda_1, \lambda_2) \in \R^2$.  Then,  similarly to Remark~\ref{specifyvf-Cao2d}, we can see from the expression \eqref{vector-Taub-NUT} for the soliton vector field that this family gives examples which fill the regions 
\[ C_+ = \left\{ (\lambda_1, \lambda_2) \: | \: \lambda_2 > - \lambda_1 > 0\right\}, \qquad C_- = \left\{ (\lambda_1, \lambda_2) \: | \: \lambda_1 > - \lambda_2 > 0\right\},  \]
after accounting for rescaling and symmetry.  Together with the examples of Section~\ref{ss:Cao2D} with $\ell = 2$, $d_1 = d_2 = 0$ and with $\ell = 1$, $d_1=1$ (Cao's examples),  this fills the region 
\[ \mathcal{C} := \{ (\lambda_1, \lambda_2) \: | \: \lambda_1 + \lambda_2 > 0, \, \lambda_i \neq 0 \},   \]
in the sense that for each $X = (\lambda_1, \lambda_2) \in \mathcal{C}$, there is a complete indecomposable steady GKRS with soliton vector field $X$.  Allowing for products, we can of course fill in the rays $\lambda_1 = 0$,  $\lambda_2 \geq 0$,  and $\lambda_2 = 0$,  $\lambda_1 \geq 0$ by taking products of the cigar soliton and the flat metric on $\C$.  Notice that this does not quite fill the complement of the forbidden region 
\[ \R^2 \backslash \! -C^*(\Pol) = \{ (\lambda_1, \lambda_2) \: | \: \lambda_1 \geq 0 \text{ or } \lambda_2 \geq 0 \}.\]
\end{rem}

\appendix
\section{Curvature estimates of the Cao type examples on $\C^2$}\label{s:appendix}
\subsection{The lower bound of the sectional curvatures of an ambi-K\"ahler $4$-manifold} In this subsection, we spell out the condition for the positivity of the holomorphic sectional curvature on a class of K\"ahler complex surfaces to which our examples of GKRS belong.

\begin{dfn}~\cite{ACG2} Let $M$ a real $4$-dimensional manifold endowed with a K\"ahler structure $(g, J)$. We say that $g$ is \emph{ambi-K\"ahler} if there exists a second $g$-orthogonal complex structure $I$ which induces the opposite orientation to the one induced by $J$, and a positive function $f$ such that the conformal metric $\tilde g = \frac{1}{f^2} g$ is K\"ahler with respect to $I$. We say that the Ricci tensor $Ric$ of $g$ is \emph{ambi-hermitian} if it is both $I$ and $J$ invariant, i.e.
\[Ric(JX, JY) = Ric(IX, IY) = Ric(X, Y).\]
\end{dfn}
Ambi-K\"ahler $4$-manifolds with ambi-hermitian Ricci tensor have been studied and classified in \cite{ACG2}. They include as a special case the metrics obtained by the orthotoric ansatz, and in particular the GKRS examples in Section~\ref{s:D2}. 

Being $J$-invariant,  the traceless Ricci tensor $Ric_0$ has at each point two real eigenvalues $(\lambda, -\lambda)$,  each of multiplicity at least $2$. Indeed, at any point where $Ric_0$ is non-zero, the eigenspace decomposition of $Ric_0 \circ g^{-1}$ gives rise to a $g$-orthogonal splitting $TM= T_+M \oplus T_-M$, where $T_{\pm}M$ are of real dimension $2$ (as being $J$-invariant) and correspond to the eigenvalues $\lambda$ and $-\lambda$  of $Ric_0$; furthermore, as $Ric_0$ is also $I$-invariant, $T_{\pm}M$ are $I$-invariant. As $I$ and $J$ yield opposite orientations, we can assume (without loss) that $I=J$ on $T_+M$ and $I=-J$ on $T_-M$. We shall also assume $\lambda \ge 0$, i.e. $\lambda = \frac{1}{4}\left\|Ric_0\right\|_g$.  It follows from the above that if we denote by $\omega_J = gJ$ and $\omega_I =gI$ the  K\"ahler forms of $(g, J)$  and $(g, I)$, respectively, then
\begin{equation}\label{eq:Ricci-diagonal}
Ric_0J = \lambda \omega_I, \qquad Ric_0 I= \lambda \omega_J. \end{equation}
The \emph{conformal scalar curvature} $\kappa_I$ of the Hermitian metric $(g, I)$ is defined by
\[ \kappa_I := 3\left\langle W(\omega_I), \omega_I \right\rangle_g, \]
where $W$ denotes the Weyl  curvature tensor acting on $2$-forms via the riemannian duality $g: \bigwedge^2 TM \cong \Lambda^2 T^*M$. Notice that by~\cite{AG}, for the K\"ahler structure $(g, J)$ we have 
\[ \kappa_J = 3\left\langle W(\omega_J), \omega_J \right\rangle_g=Scal_g\] is the scalar curvature whereas for the conformally K\"ahler hermitian structure $(g, I)$ we get
\[  \kappa_I = \frac{1}{h^2} Scal_{\tilde g},\]
where $Scal_{\tilde g}$ is the scalar curvature of the conformal K\"ahler metric $(\tilde g= \frac{1}{h^2} g, I)$.
We now state our main technical result
\begin{lemma}\label{sec} Suppose  $(M, g, J)$ is ambi-K\"ahler  with respect to the complex structure $I$, and has ambi-hermitian Ricci tensor. Then, in the notations above,  at any given point, the minimum of the sectional curvature of $g$ is given by the minimum of  the function%~\footnote{There is now a discrepancy by a factor $\frac{1}{24}$  in the various definitions of $f$. CC: fixed the factor of $1/24$, and standardized the notation for the various sectional curvatures}
\[f_s(t_1, t_2):= t_1^2 \left(\frac{\Scal_g }{8}\right)+ t_1t_2 \lambda + t_2^2\left(\frac{\kappa_I}{8}\right) + \left(\frac{\Scal_g - \kappa_I}{24}\right), \]
for $(t_1, t_2)\in [-1,1]\times [-1,1]$. Furthermore, the minimum of the holomorphic sectional curvature of $(g, J)$ is given by the minima of 
\[f(t) := f_s(1, t)= \frac{\Scal_g}{6} - \frac{\kappa_I}{24} + \left(\frac{\kappa_I}{8}\right)  t^2 + \lambda t, \qquad t\in [-1, 1]. \]
\end{lemma}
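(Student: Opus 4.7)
The plan is to apply the Singer--Thorpe block decomposition of the curvature operator $\mathcal R$ on an oriented riemannian $4$-manifold, fixing the orientation induced by $J$ so that $\omega_J\in\Lambda^2_+$ and $\omega_I\in\Lambda^2_-$, and then identify each of the three resulting blocks from the hypotheses. The self-dual Weyl block is determined by the K\"ahler condition for $(g,J)$: $W_+(\omega_J)=\tfrac{\Scal_g}{6}\omega_J$ and $W_+$ restricts to $-\tfrac{\Scal_g}{12}\,\mathrm{Id}$ on the $2$-plane $\omega_J^\perp\subset\Lambda^2_+$. Conformal invariance of the Weyl tensor applied to the K\"ahler metric $\tilde g$, together with the definition of $\kappa_I$, gives the analogous description on the anti-self-dual side: $W_-(\omega_I)=\tfrac{\kappa_I}{6}\omega_I$ and $W_-|_{\omega_I^\perp}=-\tfrac{\kappa_I}{12}\,\mathrm{Id}$.

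For the off-diagonal Ricci block $B:\Lambda^2_+\to\Lambda^2_-$ (arising from the Kulkarni--Nomizu product $\tfrac12 Ric_0\owedge g$), I would work in an adapted orthonormal frame $\{e_1,\,e_2=Je_1,\,e_3,\,e_4=Je_3\}$ with $e_1,e_2$ spanning the $+\lambda$-eigenspace $T_+M$ and $e_3,e_4$ spanning the $-\lambda$-eigenspace $T_-M$. The identity $Ric_0\circ J=\lambda\, I$ recalled in \eqref{eq:Ricci-diagonal}, combined with the observation that on each $J$-invariant $2$-plane the two $g$-orthogonal complex structures $I$ and $J$ must agree up to sign, forces $Ric_0=\mathrm{diag}(\lambda,\lambda,-\lambda,-\lambda)$ in this frame. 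A direct computation with the Kulkarni--Nomizu product then yields $B(\hat\omega_J)=\lambda\hat\omega_I$ and $B\equiv 0$ on $\omega_J^\perp\subset\Lambda^2_+$, where $\hat\omega_J:=\omega_J/\sqrt2$ and $\hat\omega_I:=\omega_I/\sqrt2$.

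A plane $\pi\subset T_pM$ corresponds to a unit decomposable $2$-form $\omega$, which (by $\omega\wedge\omega=0$) can be written $\omega=\tfrac{1}{\sqrt2}(\hat\omega_++\hat\omega_-)$ with $\hat\omega_\pm$ \emph{arbitrary} unit elements of $\Lambda^2_\pm$; setting $t_1=\langle\hat\omega_+,\hat\omega_J\rangle$ and $t_2=\langle\hat\omega_-,\hat\omega_I\rangle$, every $(t_1,t_2)\in[-1,1]^2$ is thus realized. Since $W_\pm$ are scalar on the orthogonal complements of $\hat\omega_J,\hat\omega_I$ and $B$ is supported on the $\hat\omega_J$--$\hat\omega_I$ axis, expanding
\[
K(\pi)=\tfrac12\langle W_+\hat\omega_+,\hat\omega_+\rangle+\tfrac12\langle W_-\hat\omega_-,\hat\omega_-\rangle+\tfrac{\Scal_g}{12}+\langle B\hat\omega_+,\hat\omega_-\rangle
\]
depends only on $(t_1,t_2)$ and, after substitution of the three block formulas, reduces algebraically to $f_s(t_1,t_2)$. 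The claimed minimum of $K$ follows. For the holomorphic sectional curvature, I would verify in the adapted frame the standard fact that the self-dual part of $X\wedge JX$ equals $\tfrac12\omega_J$ for every unit $X$, so that $\hat\omega_+=\hat\omega_J$ and $t_1=1$; this gives $K_{\mathrm{hol}}=f(t_2)$, and $t_2\in[-1,1]$ is swept out as $X$ varies.

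The main obstacle is to establish the $W_-$ block structure cleanly from the conformal K\"ahler assumption on $(g,I)$ and to extract the off-diagonal block $B$ with the precise coefficient $\lambda$ (rather than $2\lambda$ or $\lambda/2$); both are routine once the conventions for the Kulkarni--Nomizu product and for the norms $|\omega_J|_g^2=|\omega_I|_g^2=2$ are fixed, but they require careful bookkeeping of the $(1,1)$- versus $(0,2)$-tensor identifications used throughout.
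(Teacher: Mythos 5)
Your proposal is correct and follows essentially the same route as the paper: the Singer--Thorpe decomposition of the curvature operator, identification of the $W^+$ and $W^-$ blocks via the K\"ahler and conformally K\"ahler structures (with eigenvalues determined by $\Scal_g$ and $\kappa_I$), the observation that $\widehat{Ric_0}$ sends $\omega_J$ to $\lambda\omega_I$ and kills the $(2,0)+(0,2)$ parts, and the parametrization of $2$-planes by unit self-dual and anti-self-dual components with $\|\alpha_\pm\|^2=\tfrac12$. The only cosmetic difference is that you compute the Ricci block in an adapted frame, whereas the paper reads it off directly from the identities $Ric_0J=\lambda\omega_I$, $Ric_0I=\lambda\omega_J$; the coefficients and the final reduction to $f_s(t_1,t_2)$ agree.
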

\begin{proof}
We consider the orthogonal splitting
\begin{equation}\label{Hodge}
 \Lambda^2M = \Lambda^2_+M \oplus \Lambda^2_-M \end{equation}
of $2$-forms in $4$-dimensions as a direct sum of self-dual and anti-self-dual forms. We have furthermore the orthogonal decompositions (see e.g. \cite{AG})
\begin{equation}\label{types}
 \Lambda^2_+M= \R\langle \omega_J \rangle \oplus \Lambda^{(2,0)+(0,2)}_J M, \qquad \Lambda^2_-M= \R\langle \omega_I \rangle \oplus \Lambda^{(2,0)+(0,2)}_I M, \end{equation}
where $\Lambda^{(2,0)+(0,2)}_J M$ (resp. $\Lambda^{(2,0)+(0,2)}_I M$) denote the real bundles of $2$-forms of type $(2,0)+(2,0)$ with respect to $J$ (resp. $I$).

 Using the metric $g$, we shall consider the riemannian curvature as a self-adjoint operator $R : \Lambda^2M \to \Lambda^2M$ with respect to the induced inner product $\langle \cdot, \cdot \rangle_g$ on $\Lambda^2M$.  Also,  we shall tacitly identify decomposable $2$-vectors $X\wedge Y$ with the corresponding $2$-forms $\alpha = g(X) \wedge g(Y)$ satisfying $\alpha\wedge \alpha =0$. Conversely, a little linear algebra shows that a $2$-form $\alpha$ corresponds to a decomposable $2$-vector if and only if $\alpha \wedge \alpha=0$. Thus,  at any given point, the sectional curvature $K(\alpha)$ is equivalently expressed as
\[ K(\alpha)= \langle R(\alpha), \alpha \rangle_g, \qquad \alpha \in \Lambda^2M, \, ||\alpha||_g=1, \, \alpha \wedge \alpha =0. \]
Using \eqref{Hodge}, we can decompose $\alpha= \alpha_+ + \alpha_-$ with $\alpha_+ \in \Lambda^2_+M$ and $\alpha_- \in \Lambda^2_-M$, and thus the above relation is equivalent to 
\begin{equation}\label{4D-sectional}
K(\alpha)=\langle R(\alpha_+ + \alpha_-), \alpha_+ + \alpha_- \rangle_g, \qquad  \alpha_\pm  \in \Lambda^2_\pm M, \, ||\alpha^{\pm}||^2_g = \frac{1}{2}.\end{equation}
Recall that  the curvature operator can be written as
\begin{equation}\label{Singer-Thorpe}
R = \frac{Scal_g}{12} Id + \widehat{Ric_0}  + W^+ + W^-, \end{equation}
where  $W^{\pm}$ are respectively the self-dual and anti-self-dual Weyl  curvature tensors, obtained by restricting the Weyl tensor to $\Lambda^{2}_+M$ and $\Lambda^{2}_-M$, and 
\[ \widehat{Ric_0}(X\wedge Y) := \frac{1}{2}\left(Ric_0(X) \wedge Y + X \wedge Ric_0(Y)\right), \]
with $\widehat{Ric_0} : \Lambda^{2}_{\pm}M  \to \Lambda^2_{\mp}M$.

It is well known (see~\cite{AG}) that in the ambi-K\"ahler situation, the decompositions \eqref{types} also coincide  with the respective eigenspace decompositions of $W^{+}$ and $W^{-}$, for the eigenvalues  $\left(\frac{\Scal_g}{6}, -\frac{Scal_g}{12}, -\frac{Scal_g}{12}\right)$  of $W^+$ and $\left(\frac{\kappa_I}{6}, -\frac{\kappa_I}{12}, -\frac{\kappa_I}{12}\right)$ of $W^-$. Furthermore,  we get from \eqref{eq:Ricci-diagonal} and the fact that $Ric_0$ is both $I$ and $J$-invariant
\[ \widehat{Ric_0}(\omega_J) =\lambda \omega_I, \qquad \widehat{Ric_0}(\omega_I)= \lambda \omega_J, \]
and
\[ \widehat{Ric_0}\left(\Lambda^{(2,0)+(0,2)}_J M\right)= \widehat{Ric_0}\left(\Lambda^{(2,0)+(0,2)}_I M\right)=0. \]

To obtain the first claim, we decompose $\alpha_{\pm}$ according to \eqref{types} 
\[\alpha_+ = \frac{t_1}{2} \omega_J + \alpha^{(2,0)+(0, 2)}_J, \qquad \alpha_- = \frac{t_2}{2} \omega_I + \alpha^{(2,0)+(0, 2)}_I\]
and notice that  in the ambi-K\"ahler situation described above, we have
\[ 
\begin{split}
R(\alpha_+)  &=  \frac{\Scal_gt_1}{8}\omega_J +  \frac{\lambda t_1}{2} \omega_I, \\
  R(\alpha_-)&= \left(\frac{\Scal_gt_2}{24}+ \frac{\kappa_It_2}{12}\right)\omega_I + \left(\frac{\Scal_g}{12} - \frac{\kappa_I}{12}\right)\alpha^{(2,0)+ (0,2)}_I + \frac{\lambda t_2}{2} \omega_J, \end{split}\]
and
\[||\omega_J||^2=||\omega_I||^2=2, \qquad ||\alpha^{(2,0)+(0, 2)}_I ||^2_g =  \frac{(1 - t_2^2)}{2}. \]
We then have
\[
 \begin{split}
K(\alpha) &=   \langle R(\alpha_+), \alpha_+\rangle_g  + \langle R(\alpha_-), \alpha_-\rangle_g  + 2\langle R(\alpha_+), \alpha_-\rangle_g \\
 &=\frac{Scal_g}{8}t_1^2 + \lambda t_1t_2 + \left(\frac{Scal_g}{24}+ \frac{\kappa_I}{12}\right)t_2^2 + \left(\frac{\Scal_g}{12}- \frac{\kappa_I}{12}\right)||\alpha^{(2,0)+(0, 2)}_I||^2_g\\
 &=\frac{Scal_g}{8}t_1^2 + \lambda t_1t_2  + \frac{\kappa_I}{8} t_2^2 + \left(\frac{\Scal_g}{24} - \frac{\kappa_I}{24}\right).
 \end{split} \]
%\[
% \begin{split}
%K(\alpha) &=   \langle R(\alpha_+), \alpha_+\rangle_g  + \langle R(\alpha_-), \alpha_-\rangle_g  + 2\langle R(\alpha_+), \alpha_-\rangle_g \\
% &=\frac{Scal_g}{8}t_1^2 + \lambda t_1t_2 + \left(\frac{Scal_g}{24}+ \frac{\kappa_I}{12}\right)t_2^2 \\
% 	& \hspace{2in} + \left(\frac{\Scal_g}{12}- \frac{\kappa_I}{12}\right)||\alpha^{(2,0)+(0, 2)}_I||^2_g\\
% &=\frac{Scal_g}{8}t_1^2 + \lambda t_1t_2  + \frac{\kappa_I}{8} t_2^2 + \left(\frac{\Scal_g}{24} - \frac{\kappa_I}{24}\right).
% \end{split} \]
For the second claim, if $X$ is a unit vector and $\alpha = (X\wedge JX)^{\flat}$ is the corresponding $2$-form, we notice that $\alpha_+=\frac{1}{2}\omega_J$ and thus $R(X, JX, X, JX)= f_s(1, t)$.                                   
\end{proof}

\subsection{Positivity of the curvature of the Cao type GKRS} The metrics given by the orthotoric ansatz \eqref{Taub-NUT-extension} (defined for arbitrary smooth functions $F_1(\xi_1)<0 < F(\xi_2)$ and variables $\xi_1 < \xi_2$) are examples of ambi-K\"ahler metrics with ambihermitian Ricci tensor, see \cite[p.~132]{ACG2}.  As a mater of fact, the formulae there (with $q=1$) show that the negative K\"ahler metric  is 
\[\tilde g= \frac{1}{(\xi_1-\xi_2)^2} g,  \qquad \tilde \omega_I = \frac{d\xi_1\wedge(dt_1 + \xi_2 dt_2)}{(\xi_1-\xi_2)^2}-\frac{d\xi_2\wedge(dt_1 + \xi_1 dt_2)}{(\xi_1-\xi_2)^2},  \] the  scalar curvature of $g$ is (see \cite[Lemma 9]{ACG0} or \cite[(79)]{ACG})
\[Scal_g = -\left(\frac{F_1''(\xi_1) - F_2''(\xi_2)}{\xi_1 - \xi_2}\right),\]
whereas the scalar curvature of $\tilde g$ is 
\[ 
\begin{split}
Scal_{\tilde g} =  & -(\xi_1-\xi_2)(F_1''(\xi_1) - F_2''(\xi_2)) \\
                           &+ 6(F_1'(\xi_1) + F_2'(\xi_2)) -12\left(\frac{F_1(\xi_1) - F_2(\xi_2)}{(\xi_1 - \xi_2)}\right).
\end{split} \]
We thus get for the conformal scalar curvature of $(g, I)$:  
\[ \begin{split}
\kappa_I = &  \frac{1}{(\xi_1-\xi_2)^2} Scal_{\tilde g} \\
= & -\left(\frac{F_1''(\xi_1) - F_2''(\xi_2)}{(\xi_1-\xi_2)}\right) +  6\left(\frac{F_1'(\xi_1)+ F_2'(\xi_2)}{(\xi_1-\xi_2)^2}\right) 
  -12\left(\frac{F_1(\xi_1) - F_2(\xi_2)}{(\xi_1-\xi_2)^3}\right).
\end{split} \]
%\[ \begin{split}
%\kappa_I = &  \frac{1}{(\xi_1-\xi_2)^2} Scal_{\tilde g} \\
%= & -\left(\frac{F_1''(\xi_1) - F_2''(\xi_2)}{(\xi_1-\xi_2)}\right) +  6\left(\frac{F_1'(\xi_1)+ F_2'(\xi_2)}{(\xi_1-\xi_2)^2}\right) \\
%  &\hspace{1.55in} -12\left(\frac{F_1(\xi_1) - F_2(\xi_2)}{(\xi_1-\xi_2)^3}\right).
%\end{split} \]
From the expression for the Ricci form in \cite[p.~132]{ACG2}, we get for $\lambda$
\[\begin{split}
\pm \lambda &= -\frac{1}{4}\left(\frac{F_1''(\xi_1) + F_2''(\xi_2)}{(\xi_1-\xi_2)}\right)  + \frac{1}{2}\left(\frac{F_1'(\xi_1)-F'_2(\xi_2)}{(\xi_1-\xi_2)^2}\right).
\end{split} \]
Let $f(t)$ be the function of Lemma \ref{sec}.  Up to changing $t \mapsto -t$, we can assume that the expression above is equal to $+\lambda$ for the purposes of studying $f$.~\footnote{One can check directly using the formulas derived below that in fact this choice of $\lambda$ is positive, so that $\lambda = + \frac{1}{4}\left\|Ric_0\right\|_g$ everywhere.}

We now specialize to $F_1(t)=F_2(t)=F(t)=(1-e^{-2a})t - 1 + e^{-2at}$, as in the example \eqref{Cao-extension}. We notice that $F''(t)=4a^2 e^{-2at}>0$ is decreasing and $F'(t)=(1-e^{-2a}) -2ae^{-2at}$ is increasing.  Set 
\[ A:= e^{-2a\xi_1} + e^{-2a\xi_2} > 0, \qquad B:= e^{-2a\xi_1} - e^{-2a\xi_2} > 0. \]
Then we get expressions 
\[ \begin{split}
\Scal_g &= -4a^2\left(\frac{B}{\xi_1-\xi_2}\right) \\
\kappa_I &= -4a^2\left(\frac{B}{\xi_1-\xi_2}\right) - \frac{12}{(\xi_1 -\xi_2)^2}\left(a A +  \frac{B}{\xi_1 - \xi_2}  \right)  \\
\lambda&= -a^2 \left( \frac{A}{\xi_1 - \xi_2} \right) - a \left( \frac{B}{(\xi_1 - \xi_2)^2} \right)
\end{split} \]

\begin{prop}\label{Ric>0}
The Ricci curvature of $g$ is positive. 
\end{prop}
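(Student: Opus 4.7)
In four dimensions, since the Ricci tensor of $g$ is ambi-hermitian, it splits as $Ric = \frac{Scal_g}{4}\,g + Ric_0$ with $Ric_0$ having eigenvalues $\pm\lambda$, each of multiplicity two. Hence positivity of $Ric$ is equivalent to the pair of scalar inequalities $Scal_g > 0$ and $(Scal_g/4)^2 > \lambda^2$. The first is immediate from the displayed formula $Scal_g = -4a^2 B/(\xi_1-\xi_2) = 4a^2 B/s$, where $s := \xi_2 - \xi_1 > 0$ and $B = e^{-2a\xi_1} - e^{-2a\xi_2} > 0$ (recall that $a > 0$ and $\xi_1 < \xi_2$ on the domain $\mathring{\rm D}$).

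For the sharper inequality, I would set $u := e^{-2a\xi_1}$, $v := e^{-2a\xi_2}$, so that $A = u+v$, $B = u-v$, and $y := 2as > 0$ satisfies $u/v = e^{y}$. A direct algebraic simplification of the displayed expressions for $Scal_g$ and $\lambda$ produces the clean factorisation
\[
\left(\frac{Scal_g}{4}\right)^{\!2} - \lambda^2 \;=\; \frac{a^2}{s^4}\bigl(B-2asv\bigr)\bigl(2asu-B\bigr) \;=\; \frac{a^2\, uv}{s^4}\bigl(e^{y}-1-y\bigr)\bigl(y - 1 + e^{-y}\bigr),
\]
where the rightmost equality uses $B - 2asv = v(e^{y}-1-y)$ and $2asu - B = u(y-1+e^{-y})$. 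Both factors $e^{y}-1-y$ and $y-1+e^{-y}$ are strictly positive on $(0,\infty)$: each vanishes at $y = 0$ and has strictly positive derivative ($e^{y}-1$ and $1 - e^{-y}$, respectively) for $y > 0$.

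\textbf{Expected obstacle.} There is no hard estimate: the whole argument reduces to the elementary calculus inequality $e^{y} > 1+y$ (and its sibling $y + e^{-y} > 1$). The only care needed is in the algebraic reduction, namely checking that \emph{both} factors in the factorisation of $(Scal_g/4)^2 - \lambda^2$ come out positive on the whole domain, and in verifying $Scal_g > 0$ separately (rather than merely $(Scal_g/4)^2 > \lambda^2$) so as to rule out the negative-definite alternative. Once these two points are addressed, the conclusion is immediate.
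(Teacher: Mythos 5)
Your proof is correct and is essentially the paper's own argument: the paper likewise reduces positivity of $Ric$ to positivity of the eigenvalue $\frac{Scal_g}{4}-\lambda$ and verifies $e^{2a\xi_2}\left(\frac{Scal_g}{4}-\lambda\right)=\frac{a}{x^2}(e^{2ax}-1)-\frac{2a^2}{x}>0$, which is exactly your factor $e^{y}-1-y>0$. The only (minor) difference is that by working with $\left(\frac{Scal_g}{4}\right)^2-\lambda^2$ together with $Scal_g>0$ you avoid having to fix the sign of $\lambda$ (which the paper handles in a footnote), at the cost of also checking the second factor $y-1+e^{-y}>0$.
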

\begin{proof}
We have 
\[ Ric \geq  \left( \frac{Scal_g}{4}  - \lambda \right)g, \]
as $\lambda = \frac{1}{4}|Ric_0|$ is the (unique) positive eigenvalue of the trace-free Ricci tensor. Set $x = \xi_2 - \xi_1$. Then we compute 
\[\begin{split} 
	e^{2a\xi_2} \left( \frac{Scal_g}{4} - \lambda \right) = \frac{a}{x^2}(e^{2ax} - 1) - \frac{2a^2}{x}, 
\end{split}\]
which one readily verifies is strictly positive for all $x \geq 0$. 
\end{proof}

\begin{prop}\label{scal-max}
The scalar curvature of $g$ satisfies 
\[0 < \Scal_g \leq 4a^2\left(1- e^{-2a}\right),\]
and 
\[ \Scal_g = O(\xi_2^{-1}) \qquad \textnormal{as } \xi_2 \to \infty. \]
In particular, $\Scal_g$ attains its maximum at the origin of $\C^2$.
\end{prop}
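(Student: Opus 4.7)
The approach is to treat the explicit formula
\[ \Scal_g \;=\; 4a^2\cdot\frac{e^{-2a\xi_1}-e^{-2a\xi_2}}{\xi_2-\xi_1} \]
as a function on the closure of $\mathring{\rm D} = (0,1)\times(1,+\infty)$, noting that $\xi_1,\xi_2$ extend to smooth functions on all of $\C^2$ because they are the eigenvalues of a globally defined hamiltonian $2$-form (as established in Section~\ref{s:type-C^n}). The strict positivity $\Scal_g>0$ is then immediate: the hypothesis $\xi_1<\xi_2$ together with $a>0$ forces both $B = e^{-2a\xi_1}-e^{-2a\xi_2}$ and $\xi_2-\xi_1$ to be positive.

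For the upper bound I plan to show that both partial derivatives of $\Scal_g$ are strictly negative throughout $\mathring{\rm D}$, so that the supremum on $\overline{\mathring{\rm D}}$ is attained at the unique corner $(\xi_1,\xi_2)=(0,1)$. Setting $u := \xi_2-\xi_1>0$, a direct computation gives
\[ \frac{\partial\Scal_g}{\partial \xi_1} \;=\; \frac{4a^2 e^{-2a\xi_1}}{u^2}\bigl[1-e^{-2au}-2au\bigr], \qquad \frac{\partial\Scal_g}{\partial \xi_2} \;=\; \frac{4a^2 e^{-2a\xi_2}}{u^2}\bigl[1+2au-e^{2au}\bigr]. \]
Each bracket vanishes at $u=0$ and has a strictly negative $u$-derivative for $u>0$ (equal respectively to $2a(e^{-2au}-1)$ and $2a(1-e^{2au})$), so both brackets are strictly negative on $(0,+\infty)$. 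Hence $\Scal_g$ is strictly decreasing in each of $\xi_1,\xi_2$ separately, and its supremum on $\overline{\mathring{\rm D}}$ is
\[ \Scal_g\big|_{(\xi_1,\xi_2)=(0,1)} \;=\; 4a^2(1-e^{-2a}). \]

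It remains to identify the corner $(\xi_1,\xi_2)=(0,1)$ with the origin of $\C^2$. From the toric description in Section~\ref{s:type-C^n} the two labels of the Delzant polyhedron are $L_1(\sigma)=-\frac{2}{q(0)}\,\xi_1\xi_2$ and $L_2(\sigma)=-\frac{2}{q(1)}(1-\xi_1)(1-\xi_2)$, so the corner in question is precisely the unique common zero of $L_1$ and $L_2$, i.e.\ the $\T^2$-fixed vertex of $\Pol_2$ corresponding to $0\in\C^2$. Finally, the decay $\Scal_g=O(\xi_2^{-1})$ as $\xi_2\to\infty$ can be read off directly: $B\le 1$ on $\overline{\mathring{\rm D}}$ (since $\xi_1\ge 0$) and $\xi_2-\xi_1\ge \xi_2-1$, so $\Scal_g\le 4a^2/(\xi_2-1)$. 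The only step demanding a little care is the identification of the corner with the origin, but this is a routine consequence of the toric formalism; otherwise the argument reduces to a one-variable elementary inequality of the form $1-e^{-2au}<2au$.
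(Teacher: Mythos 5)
Your proof is correct, and for the final claim (that the maximum is attained at the origin) it takes a genuinely different and more self-contained route than the paper. For the positivity and the upper bound the two arguments are essentially the same elementary fact about the difference quotient of the convex function $e^{-2at}$: the paper deduces $\Scal_g>0$ from Proposition~\ref{Ric>0} and declares the bound $\Scal_g\le 4a^2(1-e^{-2a})$ ``immediate from the expression,'' while you make the monotonicity of $B/(\xi_2-\xi_1)$ in each of $\xi_1,\xi_2$ explicit via the one-variable inequalities $1-e^{-2au}<2au$ and $e^{2au}>1+2au$; your computation of the two partial derivatives is correct. Where you genuinely diverge is the localization of the maximum: the paper first uses properness of $\xi_2$ and the decay $\Scal_g\to 0$ to get existence of a maximum, and then invokes \cite[Prop.~4.2]{Cao-Hamilton} and \cite[Lemma 1]{bryant} to identify the unique critical point of $\Scal_g$ with the minimum of the strictly convex soliton potential $f=a(\xi_1+\xi_2)$, which sits at the $\T^2$-fixed point. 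You instead get strict monotonicity of $\Scal_g$ in each variable on $[0,1]\times[1,\infty)$, so the supremum is forced to the point $(\xi_1,\xi_2)=(0,1)$, which you correctly identify with the vertex of the Delzant cone, i.e.\ the origin, via the labels $L_1,L_2$; this avoids the external soliton results entirely (though it does rely on the proposition-specific formula rather than general soliton structure, and note in passing that $(1,1)$ is also a corner of the closed domain --- harmless, since your separate monotonicity in each variable already pins the supremum at $(0,1)$). Your explicit bound $\Scal_g\le 4a^2/(\xi_2-1)$ also gives the $O(\xi_2^{-1})$ decay more quantitatively than the paper states it.
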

\begin{proof} The inequality $\Scal_g>0$ follows from Proposition~\ref{Ric>0}.   The other inequality is immediate from the expression for $\Scal_g$ derived above.  As $\xi_2$  is a proper function (this is established in the proof of Lemma~\ref{l:completion})  and $\Scal_g \to 0$ when $\xi_2\to \infty$, we conclude that $\Scal_g$ achieves its maximum on $\C^2$. By \cite[Prop.~4.2]{Cao-Hamilton}  and \cite[Lemma 1]{bryant} the only critical point of $\Scal_g$ coincides with the unique point where the strictly convex soliton function $f=a(\xi_1 + \xi_2)$  achieves its minimum. The latter corresponds to $\xi_1=0$ and $\xi_2=1$, i.e. is the fixed point of the $\T^2$ action. The claim follows.
\end{proof}

\begin{prop}\label{hol-sec}
In the situation described above, we have that $f(t) > 0$. In particular,  in this case the holomorphic sectional curvature of $g$ is positive. 
\end{prop}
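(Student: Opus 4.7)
The plan is to reduce the inequality $f(t)>0$ on $[-1,1]$ to a short calculus exercise in one variable. Set $x:=\xi_2-\xi_1$, $u:=e^{-2a\xi_1}$, and introduce the dimensionless variable $y:=ax>0$ together with $A':=1+e^{-2y}$ and $B':=1-e^{-2y}$. A direct substitution of the formulas for $\Scal_g$, $\kappa_I$ and $\lambda$ derived above into $f(t)$ shows that $(x^3/u)\,f(t)$ depends only on the pair $(y,t)$: setting $Q(y):=y^2B'-3yA'+3B'$ and $R(y):=y^2A'-yB'$, one finds
\[\tilde f(y,t):=\frac{x^3}{u}\,f(t)=\frac{(y^2-1)B'+yA'}{2}+\frac{Q(y)}{2}\,t^2+R(y)\,t,\]
so that the proposition becomes equivalent to $\tilde f(y,t)>0$ for every $y>0$ and $t\in[-1,1]$.

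The idea is then the following. For every $y>0$ the quadratic $\tilde f(y,\,\cdot)$ is strictly convex provided $Q(y)>0$; its unique critical point $t^*(y)=-R(y)/Q(y)$ lies strictly below $-1$ provided $P(y):=R(y)-Q(y)>0$; and its value at the nearer endpoint is $G(y):=\tilde f(y,-1)=1-e^{-2y}(2y^2+2y+1)$. If all three of $Q,P,G$ are positive on $(0,\infty)$, then by convexity and monotonicity of $\tilde f(y,\,\cdot)$ on $[-1,1]$ one obtains $\tilde f(y,t)\geq G(y)>0$, as desired. A short explicit differentiation yields the nested chain
\[Q'(y)=P(y),\qquad P'(y)=2G(y),\qquad G'(y)=4y^2e^{-2y},\]
while $Q(0)=P(0)=G(0)=0$. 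Since $G'(y)>0$ for $y>0$, three successive integrations from $0$ deliver $G>0$, then $P>0$, then $Q>0$ on $(0,\infty)$, closing all three requirements at once.

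The main obstacle I anticipate is the initial algebraic bookkeeping rather than the analysis. It is not evident a priori that $(x^3/u)\,f(t)$ should depend only on the combination $y=ax$, nor that the three positivity statements needed in the convexity analysis fit into such a neat chain of antiderivatives; once this structure is uncovered, the remaining verification reduces essentially to the one-line observation $G'(y)=4y^2e^{-2y}\geq 0$.
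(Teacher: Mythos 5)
Your proof is correct, and it takes a genuinely different route from the paper's. The paper splits the argument into two lemmas: it first checks $f(\pm 1)>0$ by power-series expansions, and then handles the possibility of an interior critical point $t^*=-4\lambda/\kappa_I\in[-1,1]$ separately, bounding the critical value below by $\tfrac{1}{2}\left(\tfrac{\Scal_g}{3}-\tfrac{\kappa_I}{12}-\lambda\right)$ and verifying positivity of that expression by another series. You instead prove the stronger structural fact that $t^*<-1$ always (equivalently, $\kappa_I<4\lambda$ wherever $\kappa_I>0$), so the interior-critical-point case never occurs and the minimum of $f$ over $[-1,1]$ is always attained at $t=-1$. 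I have checked your reduction: with $y=ax$, $u=e^{-2a\xi_1}$, one indeed gets $\tfrac{x^3}{u}\Scal_g=4y^2B'$, $\tfrac{x^3}{u}\kappa_I=4Q(y)$, $\tfrac{x^3}{u}\lambda=R(y)$, the stated constant term, $\tilde f(y,-1)=G(y)=1-e^{-2y}(2y^2+2y+1)$, and the chain $Q'=P$, $P'=2G$, $G'=4y^2e^{-2y}$ with $Q(0)=P(0)=G(0)=0$, which settles all three positivity statements at once. (Your $Q>0$ is exactly Lemma \ref{conformalnonneg}, and your $G(y)$ corresponds to $e^{2a\xi_2}f(-1)=\sum_{k\ge 3}\tfrac{(2a)^k x^{k-3}}{k!}$; the paper's displayed intermediate expression for $e^{2a\xi_2}f(-1)$ carries a sign slip in the $a^2$ term coming from $-\lambda$, but its conclusion is unaffected.) What your approach buys is that the three required inequalities collapse to the single observation $G'\ge 0$, together with the extra geometric information that the vertex of the quadratic never enters $[-1,1]$; what the paper's buys is that it never needs to locate $t^*$ precisely, at the cost of one more series verification.
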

The proof of Proposition \ref{hol-sec} is broken up into the following two lemmas 
\begin{lemma}\label{hol-sec-lemma1}
	The function $f(t)$ is positive at $t = \pm 1$.
\end{lemma}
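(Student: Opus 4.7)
The plan is a direct computation. From the definition of $f(t)$,
\[
f(1) = \tfrac{\Scal_g}{6} + \tfrac{\kappa_I}{12} + \lambda, \qquad f(-1) = \tfrac{\Scal_g}{6} + \tfrac{\kappa_I}{12} - \lambda,
\]
and the closed-form expressions for $\Scal_g$, $\kappa_I$, $\lambda$ derived above in terms of $A$, $B$, and $x:= \xi_2 - \xi_1 > 0$ are already available. The first step is simply to substitute these formulas and collect terms. To get a clean expression I would multiply both sides by $e^{2a\xi_2} > 0$, so that $e^{2a\xi_2} A = e^{2ax}+1$ and $e^{2a\xi_2} B = e^{2ax}-1$, and the resulting expressions depend only on $y := ax > 0$ (recall we are in the Cao type case with $a>0$). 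After canceling terms I expect
\[
e^{2a\xi_2}\, x^{3}\, f(1) \;=\; e^{2y}(2y^{2}-2y+1) - 1,
\]
\[
e^{2a\xi_2}\, x^{3}\, f(-1) \;=\; e^{2y} - 1 - 2y - 2y^{2}.
\]

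The second step is to verify that both right-hand sides are positive for $y > 0$, which is elementary calculus. For $f(1)$, set $G(y) = e^{2y}(2y^{2}-2y+1) - 1$; then $G(0)=0$ and a short computation gives
\[
G'(y) = 2e^{2y}(2y^{2}-2y+1) + e^{2y}(4y-2) = 4y^{2}e^{2y} \geq 0,
\]
with strict inequality for $y \neq 0$, so $G(y) > 0$ for $y > 0$. For $f(-1)$, set $H(y) = e^{2y} - 1 - 2y - 2y^{2}$; then $H(0)=H'(0)=0$ and $H''(y) = 4(e^{2y}-1) > 0$ for $y > 0$, so $H' > 0$ and $H > 0$ on $(0, \infty)$.

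There is no real obstacle here beyond the bookkeeping of the substitution; the only thing to watch is sign tracking (since $\xi_1 - \xi_2 < 0$ appears to various powers in the raw formulas, and $B/(\xi_1-\xi_2)<0$) when combining $\Scal_g/6$, $\kappa_I/12$, and $\pm\lambda$. Once that is done correctly, positivity reduces to the two one-variable inequalities above, each of which collapses to a single differentiation.
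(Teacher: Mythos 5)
Your proposal is correct and follows essentially the same route as the paper: substitute the explicit expressions for $\Scal_g$, $\kappa_I$, $\lambda$ in terms of $x=\xi_2-\xi_1$, clear the factor $e^{2a\xi_2}$, and verify two elementary one-variable inequalities (I checked that both of your closed forms $e^{2a\xi_2}x^3f(1)=e^{2y}(2y^2-2y+1)-1$ and $e^{2a\xi_2}x^3f(-1)=e^{2y}-1-2y-2y^2$ are correct). The only cosmetic differences are that the paper establishes positivity via power series with nonnegative coefficients rather than by differentiating, and handles $t=+1$ by the shortcut $f(1)=f(-1)+2\lambda$ with $\lambda\ge 0$ instead of a second direct computation.
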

\begin{proof}
For $t = -1$, we see that
%\begin{equation*}
%\begin{split}
%	f(-1) &=  \frac{\Scal_g}{6} + \frac{\kappa_I}{12} - \lambda  \\
%			&= \frac{\Scal_g}{4}   -  \frac{1}{(\xi_1 -\xi_2)^2}\left(a A +  \frac{B}{\xi_1 - \xi_2}  \right) -  a^2 \left( \frac{A}{\xi_1 - \xi_2} \right) + a \left( \frac{B}{(\xi_1 - \xi_2)^2} \right) \\
%			&= -a^2\left( \frac{B}{\xi_1 -\xi_2} \right)   -  \frac{1}{(\xi_1 -\xi_2)^2}\left(a A +  \frac{B}{\xi_1 - \xi_2}  \right) -  a^2 \left( \frac{A}{\xi_1 - \xi_2} \right) - a \left( \frac{B}{(\xi_1 - \xi_2)^2} \right),
%\end{split}
%\end{equation*}
\begin{equation*}
\begin{split}
	f(-1) &=  \frac{\Scal_g}{6} + \frac{\kappa_I}{12} - \lambda  \\
			&= \frac{\Scal_g}{4}   - \frac{aA}{(\xi_1 -\xi_2)^2} -  \frac{B}{(\xi_1 - \xi_2)^3}   -  \frac{a^2 A}{\xi_1 - \xi_2}  +  \frac{aB}{(\xi_1 - \xi_2)^2}  \\
				&= -\frac{a^2B}{\xi_1 -\xi_2}  - \frac{aA}{(\xi_1 -\xi_2)^2} -  \frac{B}{(\xi_1 - \xi_2)^3}   -  \frac{a^2 A}{\xi_1 - \xi_2}  +  \frac{aB}{(\xi_1 - \xi_2)^2} 
			%&= -a^2\left( \frac{B}{\xi_1 -\xi_2} \right)   -  \frac{1}{(\xi_1 -\xi_2)^2}\left(a A +  \frac{B}{\xi_1 - \xi_2}  \right) -  a^2 \left( \frac{A}{\xi_1 - \xi_2} \right) - a \left( \frac{B}{(\xi_1 - \xi_2)^2} \right),
\end{split}
\end{equation*}
so that, again writing $x = \xi_2 - \xi_1$,  we have%~\footnote{VA: how do you see it? I would try to show the inequality below directly. CC: good catch, in fact it's the opposite sign.  It's the slope of the secant line to $e^{-2ax}$,  at $x = \xi_1$ which is of course \emph{increasing} as we increase $\xi_1$. I replaced this with a simpler argument in the same vein as the others.} 
\begin{equation*}
\begin{split}
e^{2a\xi_2}f(-1) &= \frac{a^2(A+B)}{x} + \frac{B}{x^3} - \frac{a(A+B)}{x^2} \\
				&= \frac{2a^2}{x} + \frac{e^{2ax} - 1}{x^3} - \frac{2a}{x^2} \\
				&= \frac{2a^2}{x} + \sum_{k = 2}^\infty \frac{(2a)^k x^{k-3}}{k!} > 0
\end{split}
\end{equation*}
At $t = +1$,  we have that 
\[ f(1) = f(-1) + 2\lambda > 0 \]
since $f(-1) > 0$ and $\lambda \geq 0$.\\
 \end{proof}

  \begin{lemma}
For all $(\xi_1, \xi_2)$ with $x \geq 0$ there are only three possibilities: either $f$ is constant, $f(t)$ has no critical points in $[-1, 1]$, or $f(t^*) > 0$ at a critical point $t^* \in [-1, 1]$. 
\end{lemma}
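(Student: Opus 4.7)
The plan is to analyze the quadratic $f(t) = \tfrac{\kappa_I}{8}\,t^2 + \lambda\,t + \bigl(\tfrac{\Scal_g}{6} - \tfrac{\kappa_I}{24}\bigr)$ case-by-case according to the sign of its leading coefficient $\kappa_I$, using the fact (recorded in the footnote to Lemma~\ref{sec}) that $\lambda>0$. When $\kappa_I=0$, $f$ is affine; if additionally $\lambda=0$ it is constant (first alternative), and otherwise has no critical point (second alternative). When $\kappa_I\neq 0$, the unique critical point is $t^*=-4\lambda/\kappa_I$.

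If $\kappa_I<0$, the parabola is concave and $t^*>0$ is a maximum. If $t^*>1$ the critical point lies outside $[-1,1]$, placing us in the second alternative; otherwise $t^*\in(0,1]$ and, being a global maximum, $f(t^*)\geq f(1)>0$ by Lemma~\ref{hol-sec-lemma1}, yielding the third alternative.

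The crux is the case $\kappa_I>0$, where I claim that $t^*<-1$ always, so no critical point lies in $[-1,1]$ and we again land in the second alternative. Since $\lambda>0$, this amounts to the sharp inequality $4\lambda>\kappa_I$. Substituting the explicit formulas for $\kappa_I$ and $\lambda$ derived earlier, and setting $\alpha:=2a(\xi_2-\xi_1)>0$, the common positive factor $e^{-2a\xi_1}/(\xi_2-\xi_1)^3$ can be extracted, reducing the inequality to the scalar statement
\[
 h(\alpha) := 4\alpha - 12 + 2e^{-\alpha}(\alpha^2+4\alpha+6) > 0 \qquad \text{for all } \alpha>0.
\]
This is the main technical point, though its proof is short: one checks $h(0)=h'(0)=0$ while $h''(\alpha)=2\alpha^2 e^{-\alpha}>0$ for $\alpha>0$, so $h$ is strictly convex with a second-order zero at $\alpha=0$, forcing $h>0$ on $(0,\infty)$.
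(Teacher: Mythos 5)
Your proof is correct, but it follows a genuinely different route from the paper's. The paper does not case-split on the sign of $\kappa_I$: when $\kappa_I\neq 0$ and $|t^*|\le 1$ it evaluates $f(t^*)=\frac{\Scal_g}{6}-\frac{\kappa_I}{24}+\frac{\lambda}{2}t^*$, bounds this below by $\frac{\Scal_g}{6}-\frac{\kappa_I}{24}-\frac{\lambda}{2}$ using only $|t^*|\le 1$ and $\lambda\ge 0$, and then verifies $\frac{\Scal_g}{3}-\frac{\kappa_I}{12}-\lambda>0$ by expanding $e^{2a\xi_2}\bigl(\frac{\Scal_g}{3}-\frac{\kappa_I}{12}-\lambda\bigr)$ as a power series in $x$ with nonnegative coefficients. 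You instead show that the third alternative essentially never occurs in the convex case: your inequality $4\lambda>\kappa_I$, equivalent to $h(\alpha)=4\alpha-12+2e^{-\alpha}(\alpha^2+4\alpha+6)>0$ for $\alpha=2ax>0$ (which I have checked: $h(0)=h'(0)=0$ and $h''(\alpha)=2\alpha^2e^{-\alpha}>0$, and the reduction from the formulas for $\lambda,\kappa_I$ is correct), forces $t^*<-1$ whenever $\kappa_I>0$, while for $\kappa_I\le 0$ the critical point is a global maximum and positivity is inherited from the endpoint values of Lemma~\ref{hol-sec-lemma1}. Both arguments ultimately rest on an elementary one-variable positivity check in $\alpha$, but they are different checks. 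The paper's version is uniform in the sign of $\kappa_I$ and certifies $f(t^*)>0$ directly wherever $t^*$ falls; yours yields the sharper structural conclusion that, since $\kappa_I\ge 0$ by Lemma~\ref{conformalnonneg}, the function $f$ is in fact monotone or constant on $[-1,1]$, so the minimal holomorphic sectional curvature is always attained at $t=\pm 1$ and Lemma~\ref{hol-sec-lemma1} carries the whole weight. One cosmetic point: $\lambda$ vanishes at $x=0$ (where $\kappa_I=0$ as well), so the strict positivity of $\lambda$ you invoke holds only for $x>0$; this is harmless since your $\kappa_I=0$ branch already absorbs the case $x=0$, and in the branches $\kappa_I\neq 0$ one automatically has $x>0$.
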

\begin{proof}
If $(\xi_1, \xi_2)$ are such that $\kappa_I = 0$, then $f''(t) = \frac{\kappa_I}{4} = 0$. 
Otherwise, the function $f(t)$ has a critical point at $t^*$ if and only if 
  \[ t^*   = -4\frac{\lambda }{\kappa_I}. \] 
  Now, if $|t^*| > 1$, then $f$ has no critical points for $t \in [-1,1]$. 
  %and so $f(t)>0$ on the whole interval $[-1,1]$. 
  Otherwise $|t^*| \leq 1$, and evaluating back in $f(t)$ gives 
  \begin{equation*}
\begin{split}
f(t^*) &= \frac{\Scal_g}{6} - \frac{\kappa_I }{24}+ 2\frac{\lambda^2}{\kappa_I} - 4 \frac{\lambda^2}{\kappa_I} \\
	&= \frac{\Scal_g}{6} - \frac{\kappa_I }{24} -  2\frac{\lambda^2}{\kappa_I}. \\
	&= \frac{\Scal_g}{6} - \frac{\kappa_I }{24} + \frac{\lambda}{2}  t^*.
\end{split}
\end{equation*}
Therefore
\[ f(t^*) \geq   \frac{\Scal_g}{6} - \frac{\kappa_I }{24}-  \frac{|\lambda|}{2} .   \] 
Therefore we can compute 
\[ \begin{split}
  2f(t^*)  &\geq   \frac{\Scal_g }{3}- \frac{\kappa_I}{12}  -\lambda   \\ 
 			&=    \frac{a^2(B-A)}{\xi_2-\xi_1}  +  \frac{a(A + B)}{(\xi_2 - \xi_1)^2}  -  \frac{B}{(\xi_2 - \xi_1)^3} 
\end{split} \]
Re-expressing the above in terms of the variable $x$, we get
\[ \begin{split}
2e^{2a\xi_2} f(t^*) &\geq    \frac{2ae^{2ax}}{x^2}   - \frac{2a^2}{x}-   \frac{e^{2ax}- 1}{x^3} \\
				&= \sum_{k = 0}^\infty \left( 1 - \frac{1}{k+3}\right)\frac{(2a)^{k+3}x^{k}}{k!} > 0.
\end{split} \]
\end{proof}
Using this, we can in fact show that $g$ has positive sectional curvature. Recall that the minimum of the sectional curvature is controlled by the minimum of
\[f_{s}(t_1, t_2):= \left(\frac{\Scal_g }{8} \right)t_1^2+ \lambda t_1t_2  + \left(\frac{\kappa_I }{8}\right)t_2^2+ \left( \frac{\Scal_g - \kappa_I}{24} \right), \]
over $(t_1, t_2)\in [-1,1]\times [-1,1].$
\begin{prop}\label{full-sec}
The function $f_s(t_1, t_2) > 0$,  so that the full sectional curvature of $g$ is positive.  
\end{prop}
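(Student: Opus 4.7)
My plan is to minimize the continuous function $f_s$ on the compact square $[-1,1]\times[-1,1]$ and to split into three cases according to where the minimum is achieved: (i) interior critical points, (ii) the edges $t_1=\pm 1$, (iii) the edges $t_2=\pm 1$. Case (ii) is immediate from Proposition~\ref{hol-sec} since $f_s(\pm 1,t_2)=f(\pm t_2)$. Case (i) and case (iii) each lead to an inequality to verify by direct substitution of the explicit expressions already computed for $\Scal_g$, $\kappa_I$ and $\lambda$ in terms of $x=\xi_2-\xi_1>0$ and $A=e^{-2a\xi_1}+e^{-2a\xi_2}$, $B=e^{-2a\xi_1}-e^{-2a\xi_2}$.

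For the interior critical points, the equations $\partial_{t_i}f_s=0$ are linear and always have $(0,0)$ as a solution; and on any line of critical points the value of $f_s$ is constant, equal to $f_s(0,0)=\tfrac{\Scal_g-\kappa_I}{24}$. From the formulas it follows that $\Scal_g-\kappa_I=\tfrac{12}{x^3}(aAx-B)$, so that positivity at the interior critical locus reduces, after setting $s=ax>0$ and using $B=2e^{-a(\xi_1+\xi_2)}\sinh(ax)$ and $A=2e^{-a(\xi_1+\xi_2)}\cosh(ax)$, to the elementary inequality $s\cosh(s)>\sinh(s)$ valid for $s>0$.

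For case (iii), the restriction $h_\pm(t_1):=f_s(t_1,\pm 1)=\tfrac{\Scal_g}{8}t_1^2\pm\lambda t_1+\tfrac{\Scal_g+2\kappa_I}{24}$ is a parabola opening upward (because $\Scal_g>0$ by Proposition~\ref{Ric>0}) with vertex at $t_1=\mp 4\lambda/\Scal_g$. If the vertex lies outside $[-1,1]$ the minimum of $h_\pm$ on $[-1,1]$ is attained at an endpoint, and we are back to case (ii). Otherwise the minimum equals
\[
\frac{\Scal_g+2\kappa_I}{24}-\frac{2\lambda^2}{\Scal_g},
\]
and the claim becomes $\Scal_g^2+2\Scal_g\kappa_I>48\lambda^2$. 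This is the main computational obstacle. I would settle it by substituting the explicit expressions and grouping terms: using $A^2-B^2=4e^{-2a(\xi_1+\xi_2)}$ and $B=2e^{-a(\xi_1+\xi_2)}\sinh(ax)$, the identity
\[
\Scal_g^2+2\Scal_g\kappa_I-48\lambda^2=\frac{192\,a^2\,e^{-2a(\xi_1+\xi_2)}}{x^4}\bigl(\sinh^2(ax)-a^2x^2\bigr)
\]
emerges after cancellation of several terms, and the right-hand side is positive because $\sinh(s)>s$ for all $s>0$. Combining the three cases yields $f_s>0$ on $[-1,1]^2$, and hence by Lemma~\ref{sec} the full sectional curvature of $g$ is positive.
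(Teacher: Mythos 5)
Your proposal is correct and follows essentially the same route as the paper: reduce to the compact square, dispose of interior critical points via $f_s(0,0)=\frac{\Scal_g-\kappa_I}{24}>0$, handle the edges $t_1=\pm1$ by the holomorphic sectional curvature bound, and minimize the upward parabola $f_s(t_1,\pm1)$ on the remaining edges. The only difference is cosmetic but pleasant: where the paper verifies the vertex value $\frac{\Scal_g+2\kappa_I}{24}-\frac{2\lambda^2}{\Scal_g}>0$ by a term-by-term power-series expansion, you obtain the closed-form factorization $\Scal_g^2+2\Scal_g\kappa_I-48\lambda^2=\frac{192\,a^2 e^{-2a(\xi_1+\xi_2)}}{x^4}\bigl(\sinh^2(ax)-a^2x^2\bigr)$ (which I have checked), reducing everything to $\sinh(s)>s$.
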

The proof of Proposition \ref{full-sec} will be a combination of the following lemmas. 
\begin{lemma}\label{conformalnonneg}
The conformal scalar curvature $\kappa_I$ is nonnegative
\end{lemma}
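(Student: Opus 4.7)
The plan is to reduce the claim to an elementary one-variable inequality, and to verify that inequality by a short calculus argument based on repeated differentiation.

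First I would use the explicit formula for $\kappa_I$ that has already been derived in the text, together with the abbreviations $A = e^{-2a\xi_1} + e^{-2a\xi_2}$ and $B = e^{-2a\xi_1} - e^{-2a\xi_2}$. Setting $x := \xi_2 - \xi_1 > 0$ and multiplying by the strictly positive factor $e^{2a\xi_2} x^3$ (using $e^{2a\xi_2}B = e^{2ax} - 1$ and $e^{2a\xi_2}A = e^{2ax} + 1$), the inequality $\kappa_I \geq 0$ is equivalent to
\[
4a^2 x^2 (e^{2ax}-1) - 12ax(e^{2ax}+1) + 12(e^{2ax}-1) \;\geq\; 0.
\]
Introducing the substitution $u := 2ax \geq 0$, this in turn reduces to showing that the one-variable function
\[
g(u) := u^2(e^u - 1) - 6u(e^u + 1) + 12(e^u - 1)
\]
is nonnegative on $[0, \infty)$.

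Next I would verify this by differentiating three times. A direct computation gives
\[
g''(u) = (u^2 - 2u + 2)e^u - 2, \qquad g'''(u) = u^2 e^u \geq 0,
\]
and one checks immediately that $g(0) = g'(0) = g''(0) = 0$. Three successive integrations from $0$ then yield $g''(u) \geq 0$, $g'(u) \geq 0$, and finally $g(u) \geq 0$ for all $u \geq 0$. Since $u = 2ax > 0$ in the regime of interest ($a > 0$ and $\xi_1 < \xi_2$), this proves $\kappa_I \geq 0$; the equality case $\kappa_I = 0$ occurs only in the flat limit $a = 0$, consistently with \cite[Prop.~17]{ACG}.

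The main obstacle is bookkeeping rather than anything conceptual: one must carry the three summands in the formula for $\kappa_I$ through the substitution $u = 2ax$ without errors, and then recognize the vanishing of $g$ together with its first two derivatives at $0$ so that the repeated integration argument closes. Once that is in place, the positivity of $g'''(u) = u^2 e^u$ makes the remainder essentially automatic.
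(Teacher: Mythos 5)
Your proof is correct. The overall strategy is the same as the paper's: substitute the explicit expressions $e^{2a\xi_2}B=e^{2ax}-1$, $e^{2a\xi_2}A=e^{2ax}+1$ into the formula for $\kappa_I$ and reduce to the nonnegativity of a single entire function of $x\geq 0$. Where you differ is in the final elementary verification: the paper expands $e^{2a\xi_2}\kappa_I$ as a power series in $x$ (after checking that the singular terms cancel) and observes that every Taylor coefficient is a nonnegative multiple of $k(k-1)$, whereas you first clear denominators by multiplying by $x^3$, substitute $u=2ax$, and run a three-fold differentiation argument on $g(u)=u^2(e^u-1)-6u(e^u+1)+12(e^u-1)$, using $g(0)=g'(0)=g''(0)=0$ and $g'''(u)=u^2e^u\geq 0$. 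I checked your derivatives and they are right; your $g$ agrees with the paper's series, whose coefficient of $u^m$ after clearing denominators is $(m-3)(m-4)/m!$. The two verifications are equally rigorous; the series approach gives slightly more (all Taylor coefficients nonnegative), while yours avoids the bookkeeping of cancelling the $x^{-2}$ and $x^{-1}$ terms. One small inaccuracy in your closing remark: equality $\kappa_I=0$ does not occur only in the flat limit $a=0$; for any $a>0$ one still has $g(2ax)=O(x^5)$, so $\kappa_I=g(2ax)/(x^3e^{2a\xi_2})$ vanishes (to second order) along the locus $x=\xi_2-\xi_1=0$, exactly as the paper notes. This does not affect the lemma, which only asserts nonnegativity.
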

\begin{proof}
We evaluate 
\[\begin{split}  e^{2a\xi_2}\kappa_I &= 4a^2 \frac{e^{2ax} - 1}{x} + 12 \frac{e^{2ax} - 1}{x^3}  - 12a \frac{e^{2ax} + 1}{x^2} \\
			&= 4a^2 \sum_{k = 1}^\infty \frac{(2a)^kx^{k-1}}{k!} + 12 \sum_{k = 3}^\infty \frac{(2a)^k x^{k-3}}{k!} - 12a\sum_{k=2}^\infty \frac{(2a)^k x^{k-2}}{k!} \\
			&= \sum_{k = 0}^\infty \left(1 + \frac{12}{(k+3)(k+2)} - \frac{6}{k+2} \right) \frac{(2a)^{k+2} x^k}{(k+1)!} \\
			&=\sum_{k = 0}^\infty \left(\frac{k(k-1)}{(k+3)(k+2)} \right) \frac{(2a)^{k+2} x^k}{(k+1)!},
\end{split}\]
from which we see that $e^{2a\xi_2}\kappa_I \geq 0$ and vanishes precisely at $x = 0$. 
\end{proof}
\begin{lemma}\label{fullsec-gradient} The $1$-form  $d f_s $ vanishes only at $(t_1, t_2) = (0,0)$, unless 
\[ \kappa_I Scal_g = 16 \lambda^2, \]
in which case it vanishes along the line 
\[  t_1\Scal_g + 4 t_2 \lambda =  t_2\kappa_I+ 4 t_1 \lambda = 0.\]
Moreover $f_s > 0$ at any critical point. 
\end{lemma}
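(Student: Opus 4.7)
The plan is to compute $df_s$ explicitly, recognise its vanishing locus as the kernel of a $2\times 2$ symmetric linear map, and then evaluate $f_s$ at any such critical point, reducing the positivity assertion to the single inequality $\Scal_g > \kappa_I$.

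First I would take partial derivatives,
\[
4\,\partial_{t_1} f_s = \Scal_g\, t_1 + 4\lambda\, t_2, \qquad 4\,\partial_{t_2} f_s = 4\lambda\, t_1 + \kappa_I\, t_2,
\]
so that the critical set of $f_s$ coincides with the kernel of the symmetric matrix
\[
M := \begin{pmatrix} \Scal_g & 4\lambda \\ 4\lambda & \kappa_I\end{pmatrix},
\]
whose determinant equals $\Scal_g\kappa_I - 16\lambda^2$. This yields the dichotomy in the statement at once: either $\det M\neq 0$, and only the origin is critical, or $\det M = 0$. In the latter case Proposition~\ref{Ric>0} guarantees $\Scal_g>0$, so $M$ has rank exactly one, and its null-space is precisely the common zero set of the two linear forms appearing in the lemma (which therefore define the same line).

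Next I would evaluate $f_s$ at the critical points. At the origin one has $f_s(0,0) = \tfrac{1}{24}(\Scal_g - \kappa_I)$. In the degenerate case I substitute $t_1 = -4\lambda t_2/\Scal_g$ into $f_s$; using $\Scal_g\kappa_I = 16\lambda^2$, the coefficient of $t_2^2$ becomes
\[
\frac{\Scal_g}{8}\cdot\frac{16\lambda^2}{\Scal_g^{\,2}} - \frac{4\lambda^2}{\Scal_g} + \frac{\kappa_I}{8} = \frac{\Scal_g\kappa_I - 16\lambda^2}{8\Scal_g} = 0,
\]
so $f_s$ is constantly equal to $\tfrac{1}{24}(\Scal_g - \kappa_I)$ along the whole critical line. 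Hence in every case the critical value of $f_s$ equals $\tfrac{1}{24}(\Scal_g - \kappa_I)$, and the lemma reduces to showing $\Scal_g > \kappa_I$.

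Finally, subtracting the explicit formulas for $\Scal_g$ and $\kappa_I$ recorded just before Proposition~\ref{Ric>0} gives
\[
\Scal_g - \kappa_I = \frac{12}{x^2}\left(aA - \frac{B}{x}\right), \qquad x := \xi_2 - \xi_1 > 0,
\]
and I would prove positivity of the right-hand side by the same power-series technique used in Lemma~\ref{conformalnonneg}: multiply by $e^{2a\xi_2}$, expand $e^{2ax}$, and verify by term-by-term comparison that the resulting series in $x$ has the form $\sum_{k\ge 1} c_k x^k$ with $c_k\ge 0$ and $c_k>0$ for $k\ge 2$ when $a>0$. This last bookkeeping step is the main---though essentially routine---obstacle, as it is the only point at which the specific form of $F(t) = (1-e^{-2a})t - 1 + e^{-2at}$ enters; everything else follows from the general quadratic structure of $f_s$ combined with Proposition~\ref{Ric>0}.
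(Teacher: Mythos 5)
Your proposal is correct and follows essentially the same route as the paper's proof: both compute the gradient of $f_s$ to obtain the rank dichotomy for the symmetric matrix $\begin{pmatrix}\Scal_g & 4\lambda\\ 4\lambda & \kappa_I\end{pmatrix}$, both observe that the critical value is always $f_s(0,0)=\tfrac{1}{24}(\Scal_g-\kappa_I)$ (the paper via the Euler-type identity $f_s=\tfrac{t_1}{8}(t_1\Scal_g+4t_2\lambda)+\tfrac{t_2}{8}(t_2\kappa_I+4t_1\lambda)+f_s(0,0)$, you via direct substitution along the critical line), and both establish $\Scal_g>\kappa_I$ by multiplying by $e^{2a\xi_2}$ and checking nonnegativity of the Taylor coefficients in $x=\xi_2-\xi_1$. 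The only added detail in your write-up is the explicit use of $\Scal_g>0$ to see that the degenerate kernel is exactly a line, which the paper leaves implicit.
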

\begin{proof}
We compute that 
\[ \frac{\partial f_s}{\partial t_1} = \left(\frac{\Scal_g}{4}\right)t_1 +  \lambda t_2 , \qquad  \frac{\partial f_s}{\partial t_2} =  \left(\frac{\kappa_I}{4}\right)t_2+ \lambda t_1 ,\]
which yields the first few claims.  For the last part, we claim that the value of $f_s$ at the critical point $(0, 0)$ satisfies 
  \[f_s(0,0) =  \frac{Scal_g - \kappa_I}{24} \geq \frac{a^3e^{-2a}}{3} > 0. \]
 To see this, we can compute 
\[\begin{split}  e^{2a\xi_2} \left( \frac{Scal_g - \kappa_I}{24} \right) &=  a\frac{e^{2ax} + 1}{2x^2} - \frac{e^{2ax} - 1}{2x^3}  \\
				&= \sum_{k=3}^\infty \frac{a^k 2^{k-2}}{(k-1)!} \left(1 - \frac{2}{k} \right) x^{k - 3},
 \end{split} \]
 from which the result is immediate.  In case that $ t_1\Scal_g + 4 t_2 \lambda =  t_2\kappa_I+ 4 t_1 \lambda = 0$ for all $t_1, t_2$, we observe that 
 \[ f_s(t_1, t_2) = \frac{t_1}{8}( t_1\Scal_g + 4 t_2 \lambda)  + \frac{t_2}{8} (t_2\kappa_I+ 4 t_1 \lambda) + f_s(0,0)  = f_s(0,0) > 0.    \]
\end{proof}
\begin{lemma}\label{ahs-min}
If  $\bar{f}(t) = \left(\frac{Scal_g}{8}\right) t^2 + \lambda t  + \frac{Scal_g + 2\kappa_I}{24} > 0 $
for all $t \in [-1,1]$, then $f_s(t_1, t_2) > 0$ for all $(t_1, t_2) \in [-1,1]\times [-1,1]$. 
\end{lemma}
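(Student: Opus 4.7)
The plan is to reduce positivity of $f_s$ on the closed square $[-1,1]^2$ to a short boundary analysis, combining Lemma~\ref{fullsec-gradient} (which handles any interior critical points) with the hypothesis $\bar{f}>0$ and one convexity argument on a single edge.

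First I will record the identities
\[ f_s(t_1, 1) = \bar{f}(t_1), \qquad f_s(t_1, -1) = \bar{f}(-t_1), \]
which follow by direct substitution, using $\kappa_I/8 + (\Scal_g-\kappa_I)/24 = (\Scal_g+2\kappa_I)/24$. By the hypothesis this gives $f_s > 0$ on the two horizontal sides of the square. Because $f_s$ is continuous on the compact square $[-1,1]^2$, it attains its minimum at some point $p_\ast$. If $p_\ast$ lies in the open interior, it must be a critical point of $f_s$, and Lemma~\ref{fullsec-gradient} gives $f_s(p_\ast) > 0$. If $p_\ast$ lies on $t_2 = \pm 1$, the value is positive by the identities above. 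It therefore only remains to control $f_s$ in the relative interior of one of the two vertical sides $t_1 = \pm 1$.

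Next I will exploit the evident symmetry $f_s(-t_1, -t_2) = f_s(t_1, t_2)$, which identifies the restriction $f_s|_{t_1 = -1}$ with $f_s|_{t_1 = 1}$ after reflecting $t_2$, so that it suffices to treat $t_1 = 1$. Setting
\[ h(t_2) := f_s(1, t_2) = \frac{\kappa_I}{8}\, t_2^2 + \lambda\, t_2 + \frac{4\Scal_g - \kappa_I}{24}, \]
Lemma~\ref{conformalnonneg} gives $\kappa_I \geq 0$, so $h$ is convex (linear if $\kappa_I=0$). Its minimum on $[-1,1]$ is therefore attained either at a corner $t_2 = \pm 1$, where $h(\pm 1) = \bar{f}(\pm 1) > 0$ by hypothesis, or at the interior critical point $t_2^\ast = -4\lambda/\kappa_I$. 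The latter requires $\kappa_I > 0$ and $|\lambda|<\kappa_I/4$, and this last inequality is exactly what forces $2\lambda^2/\kappa_I < \kappa_I/8$. Consequently,
\[ h(t_2^\ast) = \frac{4\Scal_g - \kappa_I}{24} - \frac{2\lambda^2}{\kappa_I} > \frac{4\Scal_g - \kappa_I}{24} - \frac{\kappa_I}{8} = \frac{\Scal_g - \kappa_I}{6} > 0, \]
where the final strict inequality uses the bound $\Scal_g > \kappa_I$ extracted from the proof of Lemma~\ref{fullsec-gradient}.

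The whole argument hinges on the observation that the only ``dangerous'' subcase (interior minimum of $h$) automatically confines $|\lambda|$ so tightly that the negative contribution $-2\lambda^2/\kappa_I$ is swamped by $\kappa_I/8$, leaving the unconditionally positive residue $(\Scal_g-\kappa_I)/6$. I therefore expect no substantive obstacle in this strategy; the only matter requiring care is the bookkeeping of which corner and edge cases are covered respectively by Lemma~\ref{fullsec-gradient}, by the hypothesis on the horizontal sides, and by the convexity analysis on the vertical side.
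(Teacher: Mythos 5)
Your proof is correct, and its skeleton coincides with the paper's: reduce to the boundary of the square via Lemma~\ref{fullsec-gradient} (any interior minimum is a critical point, where $f_s>0$), and handle the sides $t_2=\pm 1$ by the hypothesis through the identity $f_s(t_1,\pm 1)=\bar f(\pm t_1)$. Where you genuinely diverge is on the vertical sides $t_1=\pm 1$: the paper simply notes that $f_s(\pm 1,t)=f(\pm t)$ and cites the already-established positivity of the holomorphic sectional curvature (Proposition~\ref{hol-sec}), whereas you re-prove positivity there from scratch by convexity in $t_2$, using $\kappa_I\ge 0$ (Lemma~\ref{conformalnonneg}) and the inequality $\Scal_g>\kappa_I$, which is indeed explicitly established inside the proof of Lemma~\ref{fullsec-gradient} (where $f_s(0,0)=(\Scal_g-\kappa_I)/24\ge a^3e^{-2a}/3>0$ is shown). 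Your estimate is sound: an interior critical point $t_2^*=-4\lambda/\kappa_I$ of $h(t_2)=f_s(1,t_2)$ forces $|\lambda|<\kappa_I/4$, hence $2\lambda^2/\kappa_I<\kappa_I/8$ and $h(t_2^*)>(\Scal_g-\kappa_I)/6>0$, while the corners reduce to $\bar f(\pm 1)>0$ from the hypothesis. In effect you obtain, as a by-product, an alternative and shorter proof of Proposition~\ref{hol-sec} (the paper's proof of that proposition runs through the power-series estimates of Lemma~\ref{hol-sec-lemma1} and the lemma following it), at the cost of importing $\Scal_g>\kappa_I$ from inside another proof; the paper's route keeps the present lemma to a two-line citation but presupposes Proposition~\ref{hol-sec}. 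Both arguments are valid.
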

\begin{proof}
By Lemma \ref{fullsec-gradient},  $f_s$ is positive if and only if it is positive on the boundary.  Moreover, the function $f_s(t_1, t_2)$ is positive on the boundary component $\{\pm 1 \} \times [-1, 1]$. Indeed, this follows from the bound on the holomorphic sectional curvature, as we have $f_s(1,t) = f(t) > 0$ and $f(-1, t) = f(-t) > 0$.  In a similar fashion we see that $\bar{f}(\pm t) = f_s(t, \pm 1)$. 
\end{proof}
\begin{lemma}
The function $\bar{f}(t)$ is strictly positive for all $a > 0$ and $t \in [-1, 1]$. 
\end{lemma}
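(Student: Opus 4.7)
The plan is to observe that since $Scal_g > 0$ by Proposition~\ref{scal-max}, $\bar f(t)$ is an upward-opening quadratic in $t$, so its global minimum over all of $\R$ is attained at the unique critical point $t^* = -4\lambda/Scal_g$ with value
\[ \bar f(t^*) \;=\; \frac{Scal_g + 2\kappa_I}{24} - \frac{2\lambda^2}{Scal_g}. \]
If I can show $\bar f(t^*) > 0$, then positivity of $\bar f$ on $[-1,1]$ (and indeed on all of $\R$) follows immediately; there is no need to examine the endpoints separately or to check that $t^*$ lies in $[-1,1]$. Clearing denominators, the whole lemma thus reduces to the single inequality
\[ Scal_g\,\bigl(Scal_g + 2\kappa_I\bigr) \;>\; 48\,\lambda^2. \]

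To prove this I would substitute the closed-form expressions for $Scal_g$, $\kappa_I$, $\lambda$ recorded at the start of this section and pass to the dimensionless variable $y := 2a(\xi_2 - \xi_1) > 0$. Both sides then carry a common positive prefactor of the form $\mathrm{const}\cdot a^2\, e^{-4a\xi_2}/x^4$; stripping it off and writing $E := e^y - 1$ and $F := e^y + 1$, what remains is the inequality
\[ E\bigl(y^2 E - 4yF + 8E\bigr) \;>\; \bigl(yF - 2E\bigr)^2. \]
Expanding both sides, the cross terms $\pm 4yEF$ cancel, and the identities $F - E = 2$ and $F + E = 2e^y$ make the difference of the two sides telescope to $4\bigl((e^y-1)^2 - y^2 e^y\bigr)$. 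The whole statement therefore reduces to the elementary one-variable inequality
\[ (e^y - 1)^2 \;>\; y^2\, e^y, \qquad y > 0, \]
equivalently $2\sinh(y/2) > y$, which is immediate from the Taylor series of $\sinh$.

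I expect the main obstacle to be the algebraic bookkeeping in the second step. The three quantities $Scal_g$, $\kappa_I$, $\lambda$ each depend in a complicated way on $a$, $\xi_1$, $\xi_2$ through $A = e^{-2a\xi_1} + e^{-2a\xi_2}$ and $B = e^{-2a\xi_1} - e^{-2a\xi_2}$, and a priori the combination $Scal_g(Scal_g + 2\kappa_I) - 48\lambda^2$ is an indefinite quadratic in $E$ and $F$ with mixed signs, so the sign of the answer is not visible until the simplification is pushed through. The key point is the relation $F^2 - E^2 = -4e^y$, which produces the perfect-square-type collapse and isolates the clean remainder $(e^y-1)^2 - y^2 e^y$. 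Once this algebraic miracle is in place, the rest is trivial and requires no further geometric input beyond the explicit formulas for the three curvature quantities.
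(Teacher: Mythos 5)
Your proof is correct in substance and, despite the packaging, follows essentially the same route as the paper: both arguments locate the minimum of the convex quadratic $\bar f$ at $t^*=-4\lambda/Scal_g$, and both ultimately reduce to the single elementary inequality $(e^y-1)^2>y^2e^y$ for $y=2a(\xi_2-\xi_1)>0$ --- the paper reaches it in the equivalent form $\frac{1}{x^3}(e^{2ax}-1)^2-\frac{4a^2}{x}e^{2ax}>0$ and verifies it by expanding into a power series with nonnegative coefficients, whereas you quote $2\sinh(y/2)>y$. Your discriminant formulation is marginally cleaner in that it makes the paper's verification that $t^*\in(-1,0]$ unnecessary. Two small points. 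First, the identity you invoke should read $E^2-F^2=-4e^y$ (you wrote $F^2-E^2=-4e^y$); the telescoped expression $4\bigl((e^y-1)^2-y^2e^y\bigr)$ you arrive at is nevertheless correct, as one checks that the difference of the two sides equals $y^2(E^2-F^2)+4E^2$. Second, your reduction assumes $y>0$ strictly, but the locus $\xi_1=\xi_2=1$ (where $y=0$) is a nonempty subset of $\C^2$ and must be covered for the curvature conclusion; there the prefactor $12a^2e^{-4a\xi_2}/x^4$ you divided out is singular and the one-variable inequality degenerates to $0>0$. This is harmless because at $y=0$ one has $\lambda=\kappa_I=0$, so your intermediate inequality $Scal_g(Scal_g+2\kappa_I)>48\lambda^2$ reads $Scal_g^2>0$ and holds trivially --- but the case should be treated separately, as the paper does.
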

\begin{proof}
First, we notice that $\bar{f}''(t) =  \frac{Scal_g}{4} > 0$, so that $\bar{f}$ is convex on $[-1, 1]$. Therefore $\bar{f}$ is minimized at a critical point $t^*$.  We compute directly that~
\begin{equation*}
t^* = -\frac{4\lambda}{Scal_g} = \frac{1}{ax} - \frac{e^{2ax} + 1}{e^{2ax} - 1},
\end{equation*}
which satisfies $ -1 < t^* \leq 0$ for all $x \geq 0$.  Hence such a critical point always exists, and so $\bar{f}$ is minimized at 
\[ 	\bar{f}(t^*) = \frac{t}{8}(Scal_g t) +  \lambda t^* + \frac{Scal_g + 2\kappa_I}{24}  =  \frac{\lambda}{2} t^* +  \frac{Scal_g + 2\kappa_I}{24}.   \]
Next we calculate that 
\begin{equation} \label{expression1}
e^{2a\xi_2} \left( \frac{Scal_g + 2\kappa_I - 12 \lambda}{12}\right) =   - \frac{2a^2}{x} - \frac{3a}{x^2} - \frac{a}{x^2}e^{2ax}  + \frac{2}{x^3}(e^{2ax} - 1).  
\end{equation} 
 Using \eqref{expression1}, one can compute in a similar way to the previous calculations that
 \[ \begin{split}2e^{2a\xi_2} \bar{f}(t^*) &= \frac{1}{x^3}(e^{2ax} - 1)  -  \frac{4a^2	}{x}\left(\frac{e^{2ax}}{e^{2ax} -1}\right).
 \end{split} \]
We then compute that $h(x) := 2(e^{2ax} - 1) e^{2a\xi_2} \bar{f}(t^*)$ can be expressed
 \[ \begin{split} h(x) &= \frac{1}{x^3}(e^{2ax} - 1)^2  -  \frac{4a^2 }{x} e^{2ax}\\
 			&= \frac{1}{x^3}\left(e^{4ax} - 2e^{2ax} + 1 \right) - \frac{4a^2 }{x} e^{2ax} \\
 			&= \sum_{k = 0}^\infty \frac{2^{2k}a^k x^{k-3}}{k!} - \sum_{k = 0}^\infty \frac{2^{k+1}a^k x^{k-3}}{k!} + \frac{1}{x^3} - \sum_{k = 0}^\infty \frac{2^{k+2}a^{k+2} x^{k-1}}{k!} \\
 			&= \sum_{k = 0}^\infty \left( \frac{2^{2(k+3)} a^{k+3}}{(k+3)!} -  \frac{2^{k+4} a^{k+3}}{(k+3)!} -  \frac{2^{k+3} a^{k+3}}{(k+1)!}\right) x^k \\
 			&=  \sum_{k = 0}^\infty \frac{2^{k+3} a^{k+3}}{(k+3)!} \left( 2^{k+3} - 2 - (k+3)(k+2)\right) x^k,
 \end{split} \]
which is strictly positive for $x > 0$.  We have already seen that when $x = 0$ we have that $t^* = 0$, which implies that $\bar{f}(t^*) = \frac{Scal_g + 2\kappa_I}{24}$ is strictly positive.  Of course, this can also be seen directly from the above. Indeed, the constant term above vanishes, whereas the linear term is given by $2\frac{(2a)^4}{4!} > 0$.  Since $e^{2ax} - 1$ vanishes to order 1 at $x = 0$,  it follows that $f(t^*)$ is strictly positive also at $x = 0$. 
\end{proof}

\section*{Acknowledgements} {We are grateful to the anonymous referees for their careful reading of the manuscript and suggesting a number of valuable improvements, to R.~Conlon and S. Sun for their interest, insightful remarks and for bringing the reference \cite{Li} to our attention, and to  L. Foscolo, G. Oliviera,  R. Sena Dias,   J. Streets and Y. Ustinovskiy for discussions and useful remarks.}

\end{document}